\documentclass[11pt]{amsart}
\usepackage[margin=3cm]{geometry}

\usepackage[utf8]{inputenc}
\usepackage[english]{babel}
\usepackage[T1]{fontenc}
\usepackage{lmodern}
\usepackage{comment}

\usepackage{babelbib}

\usepackage{amsmath, amssymb, amsthm}   
\usepackage{amstext, amsfonts, a4, mathrsfs}
\usepackage{epsfig}
\usepackage{epic, eepic}

\usepackage[all]{xy} 
\usepackage[svgnames]{xcolor}

\usepackage{centernot}
\usepackage{array}
\usepackage{enumerate}
\usepackage{graphics,graphicx}
\usepackage{float}
\usepackage{ae,aecompl}
\usepackage{eso-pic}
\usepackage{pst-all}
\usepackage{caption}
\usepackage{subcaption}
\usepackage{pinlabel}
\usepackage{tikz}

\usepackage{stmaryrd}

\usepackage{tabularx}
\usepackage{cases}
\usepackage[shortlabels]{enumitem}

\usepackage{hyperref}

\usepackage{thm-restate}

\newtheorem{thm}{Theorem}[section]
\newtheorem{prop}[thm]{Proposition}

\newtheorem{claim}[thm]{Claim}

\newtheorem{lem}[thm]{Lemma}

\newtheorem{coro}[thm]{Corollary}

\newtheorem{thmintro}{Theorem}

\newtheorem{corintro}[thmintro]{Corollary}

\theoremstyle{definition}
\newtheorem{defi}[thm]{Definition}

\newtheorem{example}[thm]{Example}

\theoremstyle{remark}
\newtheorem{rmk}[thm]{Remark}

\numberwithin{equation}{section}

\PassOptionsToPackage{unicode}{hyperref}
\hypersetup{ 
linktocpage=true,
colorlinks=true, %
breaklinks=true, %
pdfencoding=auto,
pdftitle={}, pdfauthor={}, pdfsubject={}
}

\definecolor{mymagenta}{RGB}{255,0,255}
\definecolor{myorange}{RGB}{255,180,0}

\def\N{{\mathbb N}}    
    
\def\R{{\mathbb R}}

\def\cA{{\mathcal A}}          \def\cC{{\mathcal C}}        \def\cP{{\mathcal P}}    
   \def\cF{{\mathcal F}}

\newcommand\Aut{\operatorname{Aut}}		  %

\def\max{{\operatorname{max}}}

\newcommand{\dX}{d_{X^+}}

\author[T.~Barthelm\'e]{Thomas Barthelm\'e}
\address{Queen's University, Kingston, Ontario}
\email{thomas.barthelme@queensu.ca}
\urladdr{sites.google.com/site/thomasbarthelme}

\author[K.~Mann]{Kathryn Mann}
 \address{Institute de Math\'ematiques de Jussieu}
 \email{mann@imj-prg.fr}
\urladdr{https://webusers.imj-prg.fr/~kathryn.mann}

\author[N.~Paulet]{Neige Paulet}
\address{Queen's University, Kingston, Ontario}
\email{neige.paulet@queensu.ca}

\author[A.~Zalloum]{Abdul Zalloum}
 \address{Queen's University, Kingston, Ontario}
 \email{az32@queensu.ca}

\title[Pseudo-Anosov flows and the geometry of Anosov-like actions]{Pseudo-Anosov flows and the geometry of Anosov-like group actions} 

\begin{document}

\begin{abstract}
We show that the action on its orbit space induced by a pseudo-Anosov flow on a closed $3$-manifold (and more general Anosov-like actions) can be seen as an isometric action on a Gromov-hyperbolic space. When the flow is not $\R$-covered, we show that this action admits elements that are weakly properly discontinuous and deduce that elements of $\pi_1(M)$ that do \emph{not} represent a periodic orbit of the flow are generic for any word metric coming from a finite generating set.  We also give a number of other geometric group-theoretic results for Anosov-like group actions on bifoliated planes. 
\end{abstract}

\maketitle

\section{Introduction}

\subsection{Motivation and main theorems}
There is a rich relationship between the geometry and topology of a 3-manifold, and the dynamics of flows that manifold supports.  An important early result in this direction is Margulis' theorem \cite{anosovSmoothErgodicSystems1967} that any 3-manifold supporting an Anosov flow must have exponential growth of its fundamental group.  This was extended by Plante--Thurston to the codimension-1 Anosov case \cite{planteAnosovFlowsFundamental1972} and by Paternain for pseudo-Anosov (expansive) flows on 3-manifolds \cite{Pat93}, following Plante and Thurston's strategy.  Exponential growth can also be detected on the level of periodic orbits:  Bowen \cite{Bow72} showed that the number of periodic orbits of bounded length grows exponentially with the length, and Barthelmé--Fenley \cite{barthelmeCountingPeriodicOrbits2017} showed that the number of \emph{conjugacy classes} (thus in particular the number of elements) in $\pi_1(M)$ representing periodic orbits grows exponentially fast with the length of the shortest periodic orbit in that class.

We show here that, by contrast, relatively few loops in $M$ are represented by periodic orbits.  To make this precise, for a flow $\phi$ on a manifold $M$, denote by $\cP(\phi) \subset \pi_1(M)$ the set of elements freely homotopic to unoriented periodic orbits, and $\cP(\phi)^c$ its complement.  We prove: 

\begin{thmintro} \label{thm_generic_flows_version}
    Let $\phi$ be a non $\mathbb{R}$-covered pseudo–Anosov flow on a closed 3-manifold $M$.  Then, for any word metric on $\pi_1(M)$ coming from a finite generating set, 
    the set $\cP(\phi)^c$ is generic. 
    
    Furthermore, for any finite generating set $S$, there exists $S' \supset S$ for which the critical exponent of $\cP(\phi)^{c}$ is strictly larger than that of $\cP(\phi)$. If $\pi_1(M)$ is Gromov hyperbolic, this holds with $S'=S$.
\end{thmintro}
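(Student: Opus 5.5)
The plan is to route everything through the geometric picture announced in the abstract. The action of $\pi_1(M)$ on the orbit space $\cP$ of the lifted flow extends to an isometric action on a Gromov-hyperbolic space $X$; I would build $X$ as a graph whose vertices are lozenges, or leaves of the two foliations, with edges given by intersection or adjacency. Non-$\R$-covered is exactly what makes this action non-elementary, i.e.\ of general type. Indeed, for $\R$-covered flows the leaf space is a line and $X$ would be quasi-a-line or bounded.

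The key step is to show that the action admits a WPD (weakly properly discontinuous) loxodromic element. Once this holds, the action is acylindrically hyperbolic in the sense of Osin, and $\pi_1(M)$ is acylindrically hyperbolic. The dynamical input is the following dichotomy. An element $g \in \cP(\phi)$ fixes a point of the orbit space, namely the lift of a periodic orbit. Its fixed points are controlled by chains of lozenges, and such chains are finite up to the stabiliser by the Fenley/Barbot theory of lozenges. Such an element therefore moves a bounded region of $X$ only boundedly, and I would show it acts elliptically on $X$. Conversely, I would exhibit elements acting freely on the orbit space with "north–south" dynamics on the boundary of the bifoliated plane, built by multiplying two periodic elements with disjoint, non-linked fixed-point configurations. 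Such an element is loxodromic on $X$. Proving WPD for it uses the fact that only finitely many group elements coarsely fix a long segment of its axis, because such elements must nearly preserve a long chain of non-corner leaves. \textbf{This is the main obstacle}: controlling the coarse stabilisers of long segments of the axis, which needs a careful use of the non-$\R$-covered structure, in particular non-separated leaves and the finiteness of scalloped regions.

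With a WPD element in hand, genericity follows from a known result of the Gekhtman–Taylor–Tiozzo type. For a group admitting a non-elementary action with a WPD element, the set of elements acting loxodromically has "most" elements for any finite generating set. In the Gromov-hyperbolic case one can use the counting/genericity results for the Cayley graph itself (Yang's statistically convex-cocompact framework, or Gekhtman–Taylor–Tiozzo). In general one uses Yang's result that, after enlarging the generating set to some $S' \supset S$, loxodromics are exponentially generic. Since elements of $\cP(\phi)$ act elliptically on $X$, they are not loxodromic. Thus $\cP(\phi)^c$ contains the loxodromics, giving genericity and the strict inequality of critical exponents. Here the non-loxodromic set is exponentially negligible, that is, it has growth rate strictly smaller than that of the group.

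One subtlety to check is the first clause, genericity for every word metric. For this I would invoke the result that, for groups with a WPD element, loxodromics are generic in the density sense for every finite generating set. This result requires only the acylindrical action, not a strict growth gap. The gap requires the passage to $S'$, or hyperbolicity of $\pi_1(M)$, where the Cayley graph is itself hyperbolic and the growth-gap results apply directly with $S' = S$.
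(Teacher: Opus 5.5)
Your outline has the same architecture as the paper: a leaf-intersection graph, periodic elements acting elliptically, a loxodromic WPD element, then Choi's genericity of WPD loxodromics for every word metric and Choi or Gekhtman--Taylor--Tiozzo for the growth gap. However, the two places where the actual work lies are only asserted, and the one you flag as the main obstacle is a genuine gap.

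\textbf{Hyperbolicity.} Hyperbolicity of $X$ has to be proved, not taken from the abstract. For the leaf graph, the paper does this with Manning's bottleneck criterion. It uses that the projection of any path to the non-Hausdorff tree $\Lambda^+$ contains $\llbracket x,y\rrbracket$, and that geodesics stay within distance $2$ of it. Ellipticity of $\cP(\phi)$, by contrast, needs no chain theory: fixing a point of $P$ fixes a leaf, i.e.\ a vertex.

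\textbf{WPD.} More seriously, a free element with north--south dynamics on $\partial P$ obtained by ping-pong need not be WPD. Nothing in your description prevents its axis in $\Lambda^+$ from lying in the projection of an infinite chain of lozenges whose corners are all fixed by some nontrivial $h$. In that case the corner leaves sit at bounded spacing along the axis, every $h^k$ moves every point of the axis a bounded amount, and WPD fails. This is exactly the situation of every free element of the skew plane: it has two fixed points on $\partial P$ and is loxodromic on the quasi-line $X^+$, but is not WPD.

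\textbf{What is missing.} You need a specific choice of $g$ together with a rigidity statement for coarse stabilisers.
\begin{enumerate}
\item The paper takes \emph{partially linked} non-corner fixed points $a,b$; this is where the extremal Smale class is used. Then the boundary leaf of the saturation $\cF^+(\cF^-(a))$ separating $\cF^+(a)$ from $\cF^+(b)$ is either one of a family of nonseparated leaves or a prong leaf.
\item The element $(\alpha^m\beta^{-m})^k(\alpha^m\beta^m)^k$ then has an axis through these leaves. Its axis therefore has a $g$-invariant block decomposition, by nonseparated pairs or dividing prongs, along which $X^+$-distance grows linearly.
\item One shrinks the attracting interval so that it avoids $\mathrm{Fix}(h)\cap\partial P$, where $h$ fixes the unique maximal chain through those leaves. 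This guarantees the axis lies in no chain of lozenges.
\item WPD then follows. Suppose infinitely many elements move two far-apart points of the axis by less than $\epsilon$. Pigeonholing the block indices of the overlap intervals gives a nontrivial $f$ fixing leaves in blocks more than a period apart.
\item By Fenley's theorem these leaves carry sides of a common chain. Such a chain cannot meet two nonseparated leaves, or reach past a dividing prong, without having sides on them. So it has sides on every intermediate block endpoint.
\item $g$-equivariance then puts the whole axis in the projection of a chain, which is a contradiction.
\end{enumerate}
Without an argument of this kind you have no WPD element, and every later step of your outline depends on having one.
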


The $\mathbb{R}$-covered case, which consists of suspension flows of linear hyperbolic maps of the torus, and skew-Anosov flows, 
is different: in the suspension case $\cP(\phi)$ is generic (and $\cP(\phi)^c$ in fact has polynomial growth), while in the skew case we know only that the critical exponent of $\cP(\phi)^c$ is equal to that of the group, see Theorem \ref{thm: skew free maximal exponent}.  

The idea behind the proof of Theorem \ref{thm_generic_flows_version} is a novel approach to the study of pseudo-Anosov flows (and bifoliated spaces more generally) via the large scale geometry of an associated Gromov-hyperbolic graph.  We show that, for any pseudo-Anosov flow, the intersection pattern of stable and unstable leaves in the universal cover $\widetilde M$ can be captured combinatorially by an {\em intersection graph} to which the action of $\pi_1(M)$ extends. 

The natural framework for describing this is in fact not $\widetilde M$ but the {\em orbit space} of the flow, which is obtained by collapsing each (lifted) orbit in $\widetilde M$ to a point, giving a plane $P$ with two transverse, possibly singular foliations induced from $\cF^s$ and $\cF^u$, on which $\pi_1(M)$ acts by foliation-preserving homeomorphisms \cite{barbotCharacterizationAnosovFlows1995,fenleyAnosovFlows3manifolds1994} (see \cite{barthelmePseudoAnosovFlowsPlane2026} for an exposition).  

Given any bifoliated plane $P$, one can define a graph $X(P)$, whose vertices are leaves (of either foliation) with an edge if they intersect. Our first result is that this graph is a \emph{quasi-tree}:

\begin{thmintro} \label{thm_quasi_tree}
Let $(P, \cF^+, \cF^-)$ be a plane with two transverse, possibly singular foliations.  Then $X(P)$ is quasi-isometric to a tree, and the group of foliation-preserving automorphisms of $P$ acts on $X(P)$ by isometries. 
\end{thmintro}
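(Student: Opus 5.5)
The isometric action is immediate. A foliation-preserving automorphism of $P$ either preserves each of $\cF^+$ and $\cF^-$ or exchanges them. In both cases it maps leaves to leaves bijectively and preserves intersection, so it induces a graph automorphism of $X(P)$. Graph automorphisms are isometries of the path metric. The substance of the theorem is therefore the quasi-tree property. For this I would use Manning's bottleneck criterion: a geodesic metric space $X$ is quasi-isometric to a tree if and only if there is $\Delta>0$ such that, for any $x,y$ and any midpoint $m$ of a geodesic from $x$ to $y$, every path from $x$ to $y$ meets the $\Delta$-ball about $m$.

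First I need connectivity, and a picture of geodesics. Any two points of $P$ are joined by a finite concatenation of leaf segments, alternately from $\cF^+$ and $\cF^-$ (possibly through singular prongs). This follows from transversality and compactness of a path joining the points. Hence $X(P)$ is connected. I would also record a distance estimate: two leaves of the same foliation are at distance $2$ if and only if some leaf of the other foliation meets both. In general, a path $l_0,l_1,\dots,l_n$ in $X(P)$ corresponds to a chain of leaves, each crossing the next.

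The key separation fact is the following. Every leaf $l$ (or singular leaf, taking all prongs together) is a properly embedded line, or union of rays, that separates $P$ into complementary regions. Let $l$ be a leaf of $\cF^+$, and let $a,b$ be leaves lying in different components of $P\smallsetminus l$ that do not meet $l$. Then any path $a=l_0,\dots,l_n=b$ in $X(P)$ contains a leaf $l_i$ that either equals $l$ or crosses $l$, because consecutive leaves intersect and their union is connected. Every leaf crossing $l$ is at distance $1$ from $l$. So every path from $a$ to $b$ passes through the $1$-ball about $l$. These are bottlenecks of uniform size.

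To get Manning's condition, I take a geodesic $a=l_0,\dots,l_n=b$ and a midpoint $m=l_k$. I need a leaf near $m$ that separates $a$ from $b$. I would examine $m$ together with its neighbours $l_{k-1}$ and $l_{k+1}$. Geodesicity forbids shortcuts, so $l_{k-2}$ does not meet $l_{k+1}$, and similarly for other pairs. From this I want to show that $a$ and $b$ lie in different complementary components of $l_k$, or of one of its neighbours. The route is an argument that $l_0,\dots,l_{k-2}$ all lie on one side of the relevant leaf and $l_{k+2},\dots,l_n$ on the other. If some $l_j$ with $j\le k-2$ were on the far side, then, since the chain $l_j,\dots,l_k$ is connected and stays off the leaf, the chain would have to reach around. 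I would use that leaves of the same foliation are nested (disjoint lines in a plane) to produce a leaf meeting both $l_j$ and some later $l_{j'}$ with $j'-j\ge 3$, giving a shortcut and a contradiction.

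This side-determination step is the main obstacle, particularly near singular leaves and in non-Hausdorff leaf spaces, where branching leaves can complicate which components are involved. I would handle it by working with the ordering of leaves crossing a given transversal, and by allowing a larger constant $\Delta$ (say $3$ or $4$) to absorb prong and branching issues. Once separation is established, the bottleneck criterion gives the quasi-tree conclusion.
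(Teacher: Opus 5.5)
Your setup is sound. The action by graph automorphisms is immediate, and Manning's criterion is the right tool. Your separation observation is also correct: a path in $X(P)$ between leaves on different sides of a leaf $l$ must contain a leaf meeting $l$, so it passes through the $1$-ball of $l$. This is the plane-level counterpart of what the paper uses, namely that the projection to $\Lambda^+$ of any path from $x$ to $y$ contains the pseudo-interval $\llbracket x,y\rrbracket$ and lies in the $1$-neighbourhood of the path.

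The gap is the step you reduce everything to: that the midpoint of a geodesic is uniformly close to a leaf separating its endpoints. This is the whole content of the theorem, and it is not proved. You flag it as the main obstacle and offer only heuristics. A local analysis of $l_{k-1},l_k,l_{k+1}$ is also not the right mechanism, because the separating leaf may have to be found off the geodesic. For instance, suppose a geodesic passes a pair of nonseparated leaves $a,b$ of $\cF^+$ as $\dots,a,f,c,g,b,\dots$, with $c$ accumulating on $a\cup b$. Then $c$ separates nothing on the far side of $a$ from anything on the far side of $b$. The nearby separating leaves are $a$, $b$, and the $\cF^-$-leaf in perfect fit with both; that last leaf is adjacent to $c$ but not on the geodesic. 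Your sketch never says which leaf is to be tested, nor why some leaf should meet two vertices of the geodesic that are at least three apart.

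The missing idea is the paper's Lemma \ref{lem:geodesics_vs_pseudo_interval}. Work globally in the leaf space $\Lambda^+$, a non-Hausdorff tree, and show that \emph{every} vertex of a geodesic from $x$ to $y$ lies within distance $2$ in $X^+$ of $\llbracket x,y\rrbracket$. The argument runs as follows.
\begin{enumerate}
\item Suppose not. Let $v$ be the first vertex at distance $3$, and $v'$ its predecessor, at distance $2$.
\item Let $N_1$ be the $\cF^+$-saturation of the $\cF^-$-saturation of $\llbracket x,y\rrbracket$. It is connected and consists exactly of the leaves at distance at most $1$.
\item Simple connectivity of $\Lambda^+$ gives a unique leaf $l\subset\partial N_1$ separating $v$ and $v'$ from $\llbracket x,y\rrbracket$.
\item The geodesic must go out past $l$ and come back to $\llbracket x,y\rrbracket$, so two non-consecutive edges cross $l$.
\item Replacing the vertices between those edges by $l$ shortens the path, a contradiction.
\end{enumerate}
So the shortcut leaf is a boundary leaf of the saturated $1$-neighbourhood of the separating set, not a leaf adjacent to the midpoint. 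Combined with your bottleneck observation, this gives Manning's condition with constant $3$ for $X^+$.

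The paper then transfers the result to $X$ via the quasi-isometry $i_+$ of Proposition \ref{prop: X+ X- QI}, which needs some care at prongs. Working directly in $X$, as you propose, is possible in principle. You would then have to run the argument above in $X$, with separating leaves taken from both foliations, since nonseparated leaves of $\cF^+$ are separated only by $\cF^-$-leaves.
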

By possibly singular in the above statement, we mean that we allow for the presence of $k$-prongs.

In the special case of orbit spaces, or more generally, in the presence of dynamics of an Anosov-like group action, there are two special cases of what the bifoliation might look like, called {\em trivial} (meaning $\mathbb{R}^2$ with the coordinate foliations) and {\em skew} (a diagonal strip, see Example \ref{ex:trivial_skew}). Otherwise, there are infinitely many different isomorphism classes of bi-foliations that may arise. 
The next theorem describes the structure and dynamics of such a group action on $X(P)$

\begin{thmintro} \label{thm_generic_plane_version}
If $G \subset \Aut(P)$ is {\em Anosov-like with an extremal Smale class} then we are in exactly one of the following cases:
\begin{enumerate}[label = (\roman*)]
    \item $X(P)$ has finite diameter, which occurs iff the plane $P$ is trivial;
    \item $X(P)$ is a quasi-line, which occurs iff the plane $P$ is skew;
    \item Otherwise, the induced action of $G$ on  $X(P)$ is nonelementary, and has elements with the {\em Weak Proper Discontinuity} property.
\end{enumerate}
\end{thmintro}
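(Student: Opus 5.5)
The plan is to treat the three cases via the structure of leaves and of the lozenges/chains of lozenges that the dynamics of an Anosov-like action produces, using Theorem \ref{thm_quasi_tree} to know that $X(P)$ is a quasi-tree. For (i), the plane is trivial exactly when every leaf of $\cF^+$ meets every leaf of $\cF^-$. Then any two vertices of $X(P)$ lie within distance $2$ of each other: two leaves of opposite foliations are adjacent, and two leaves of the same foliation share a common neighbour. Conversely, suppose $P$ is not trivial. Then there is a leaf $l$ and a leaf $l'$ that do not intersect. I will use the dynamics (density of leaves of fixed points, and contraction/expansion of nontrivial elements on their fixed leaves) to push the pair $(l,l')$ further apart. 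Concretely, take a perfect fit, or a non-separated pair, which exists in any non-trivial plane. By the usual Anosov-like arguments, applying powers of a hyperbolic element fixing a nearby point produces pairs of leaves whose $X$-distance grows without bound. So the diameter is infinite.

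For (ii), in the skew plane the leaves are naturally ordered by $\R$, and each leaf of one foliation meets an interval of leaves of the other. I will check directly that the vertex set maps coarsely onto $\R$ (or $\mathbb Z$) by a quasi-isometry, so $X(P)$ is a quasi-line. Conversely, assume $P$ is neither trivial nor skew. An Anosov-like plane that is not $\R$-covered has branching: non-separated leaves in $\cF^+$ or $\cF^-$, or singular prongs. Using the extremal Smale class hypothesis and density of periodic points, I will produce two hyperbolic elements $g,h\in G$ whose fixed points lie on ``different branches''. I then want to show that these act loxodromically on $X(P)$ with disjoint fixed pairs in $\partial X(P)$. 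That gives a nonelementary action, and in particular $X(P)$ is not a quasi-line.

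The key step for loxodromicity is a ping-pong/separation lemma. Suppose a leaf $l$ separates $P$ so that the two sides contain leaves at arbitrarily large $X$-distance. Then, for an element $g$ that translates along a chain of non-separated leaves or lozenges, the distance $d_X(x,g^n x)$ grows linearly. I expect to prove this by showing that any path in $X(P)$ from $x$ to $g^n x$ must pass through each of the separating leaves $g^k l$, $0\le k\le n$ (or through leaves intersecting them). This uses the fact that an edge only joins intersecting leaves, so a path cannot jump over a separating leaf. The resulting lower bound is coarse, of order $n/C$.

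The main obstacle is the Weak Proper Discontinuity (WPD) property. For a loxodromic $g$ as above and $x\in X(P)$, I must show that only finitely many $h\in G$ satisfy $d(x,hx)\le\epsilon$ and $d(g^Nx,hg^Nx)\le\epsilon$ for $N$ large. My plan is to choose $g$ so that its axis passes through a ``bottleneck'', namely a region where leaves of $X$ near the axis are essentially determined: for instance, a pair of non-separated leaves or a singular leaf that only finitely many $G$-translates can coarsely fix. An element $h$ moving both $x$ and $g^N x$ a bounded amount must then nearly preserve a long segment of the axis, so it maps the bottleneck leaves to leaves within bounded distance. By separation, it must in fact fix or permute a finite set of these specific leaves. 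The stabilizer of a leaf in an Anosov-like action is cyclic or virtually cyclic, generated by the element fixing the periodic point on it. Such an element fixing two distinct far-apart non-separated leaves along the chain must be trivial or lie in a finite set, because the stabilizer of a chain of lozenges of bounded length is virtually $\mathbb Z$, and only finitely many powers move $x$ a bounded amount. Making ``bounded displacement along a long segment implies fixing specific leaves'' precise is where I expect most of the work. I would first try it using the extremal Smale class hypothesis to guarantee that the relevant non-separated leaves are isolated in the appropriate sense.
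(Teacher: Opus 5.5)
Your overall architecture (quasi-tree, splitting via the trichotomy, loxodromics coming from branching, WPD from a bottleneck along an axis) matches the paper's, but the two substantive steps fail as proposed.

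\textbf{Loxodromic elements.} Any element fixing a point $p\in P$ fixes the vertex $\cF^+(p)$, so it is elliptic on $X(P)$. Hence ``hyperbolic elements whose fixed points lie on different branches'' cannot be your loxodromics; you need elements acting \emph{freely} on $P$. The paper builds them as $g=(\alpha^m\beta^{-m})^k(\alpha^m\beta^m)^k$, where $\alpha,\beta$ fix partially linked non-corner points $a,b$. Producing such $a,b$ is exactly what the extremal Smale class is used for (Proposition \ref{prop extremal smale}). Freeness is then proved by ping-pong on $\partial P$ together with Lemma \ref{lem: connect fix sets}.

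Linear growth also needs more than ``a leaf separating regions at large distance''. You need that no single leaf of $\cF^-$ meets two of the successive barriers $g^k(\cdot)$. Nonseparated leaves and dividing prongs give exactly this (Lemma \ref{lem: blocks are long}). Translating along a chain of lozenges does not suffice: a scalloped region has diameter $1$ in $X^+$.

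The same objection breaks your argument for infinite diameter in (i). An element with a fixed point has bounded orbits on $X$, and in any case $d_X(g^nl,g^nl')=d_X(l,l')$. Fortunately that direction follows from (ii) and (iii).

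\textbf{WPD.} This is the more serious gap: the finiteness mechanism is wrong. Suppose a nontrivial $h$ fixes a leaf $z$ near $x$ and a leaf $w$ near $g^Nx$ on the axis. Then every power $h^k$ fixes the vertices $z,w$, so $d(x,h^kx)\le 2d(x,z)$ and $d(g^Nx,h^kg^Nx)\le 2d(g^Nx,w)$. Since $h$ has infinite order by Axiom \ref{Axiom_A1}, this gives infinitely many elements with bounded displacement at both points, whatever the structure of the stabilizer. So such elements must be shown \emph{not to exist} for $N$ large, rather than counted.

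That requires a condition you never impose on $g$: its axis in $\Lambda^+$ must not lie in the projection of any chain of lozenges (item \ref{it: not chain} of Proposition \ref{prop: free element condition}). The paper arranges this by shrinking the attracting interval $V_2$ away from the boundary fixed points of the element stabilizing the unique maximal chain through the nonseparated leaves. Given this condition, Theorem \ref{thm_fixed_connected}, Claim \ref{claim: chain lozenges and prong or non separated leaves} and $g$-equivariance show that no nontrivial element fixes leaves in two blocks more than a fundamental domain apart (Lemma \ref{lem cant fix two far blocks}).

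WPD then comes from pigeonhole, not from stabilizers. An $h$ moving $a$ and $b$ by less than $\epsilon$ sends blocks near $a$ (resp.\ $b$) to blocks near $a$ (resp.\ $b$), with boundedly many possible index shifts. So infinitely many such $h$ would give $h_1\neq h_2$ with the same shifts. Then $h_1h_2^{-1}$ is a nontrivial element fixing the endpoints of two far-apart blocks, a contradiction. Note that an individual $h$ need not fix or permute any bottleneck leaves; passing to $h_1h_2^{-1}$ is the point.
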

The definition of {\em Weak Proper Discontinuity} is recalled in Definition \ref{def: WPD}. The existence of such an element is the condition needed to apply many geometric group theoretic results. In this article, it leads to Theorem \ref{thm_generic_flows_version} as well as Corollary \ref{cor_acylindrically_hyperbolic} below.  
``Anosov-like" and ``extremal Smale class" are conditions on the dynamics of $G$ that capture the behavior of pseudo-Anosov flow on compact 3-manifolds, and so, in particular, are two conditions that are automatically satisfied by the orbit space action from any pseudo-Anosov flow on a closed $3$-manifold.     Smale classes are not in fact used in this work, rather we only use one elementary consequence for the structure of $P$, given in Proposition \ref{prop extremal smale}. %

As a consequence, by using work of \cite{Choi2024PseudoAnosovsGeneric,Choi2025acylindrically,GekhtmanTaylorTiozzo2022}, we conclude that the set of WPD elements for such actions is generic, and deduce Theorem \ref{thm_generic_flows_version} by translating between the orbit space and the flow.  
We also derive a number of other results of geometric group theoretic and dynamical nature, including: 

\begin{corintro} \label{cor_acylindrically_hyperbolic}
Any group with an Anosov-like action with an extremal Smale class, on a nonskew, nontrivial, bifoliated plane is acylindrically hyperbolic.  %
\end{corintro}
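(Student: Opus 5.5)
The plan is to derive the corollary directly from Theorem~\ref{thm_quasi_tree} and Theorem~\ref{thm_generic_plane_version}, using Osin's characterization of acylindrical hyperbolicity. Osin showed that a group $G$ is acylindrically hyperbolic if and only if it admits an isometric action on a Gromov-hyperbolic space that is not elementary (its limit set has more than two points, equivalently $G$ is not virtually cyclic and contains a loxodromic element) and that contains at least one loxodromic WPD element.

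First, take $G$ with an Anosov-like action with an extremal Smale class on a bifoliated plane $P$. By Theorem~\ref{thm_quasi_tree}, the graph $X(P)$ is quasi-isometric to a tree, so it is Gromov-hyperbolic. Moreover $\Aut(P)$, and hence $G$, acts on $X(P)$ by isometries.

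Second, we assume $P$ is neither trivial nor skew. Theorem~\ref{thm_generic_plane_version} then puts us in case (iii): the action of $G$ on $X(P)$ is nonelementary and has elements with the WPD property.

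Third, we check that such WPD elements are loxodromic. Definition~\ref{def: WPD} will either require loxodromicity explicitly, or the elements produced in Theorem~\ref{thm_generic_plane_version} will be seen to be loxodromic in its proof. In the latter case, the argument there must give an element $g$ with a quasi-axis in $X(P)$ along which $g$ translates, not merely an element satisfying the displacement condition.

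The only point that needs care is matching conventions. We must confirm that the paper's ``nonelementary'' excludes the virtually cyclic and parabolic cases, as Osin's theorem requires, and that the WPD element is loxodromic. Once both are confirmed, Osin's theorem applies directly and $G$ is acylindrically hyperbolic.

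We also note that $G$ is not virtually cyclic. This already follows from the action being nonelementary, since that requires loxodromics with independent fixed points on $\partial X(P)$.
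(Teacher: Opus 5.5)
Your proposal is correct and follows the paper's own argument: the paper deduces the corollary directly from Theorem \ref{thm_generic_plane_version} together with Osin's Theorem 1.2. That theorem supplies a nonelementary action on a quasi-tree with a WPD element; in the body the quasi-tree is $X^+$, which is equivariantly quasi-isometric to $X(P)$. Your hedge resolves at once, since Definition \ref{def: WPD} applies only to loxodromic elements, and non-virtual-cyclicity follows from the two loxodromics with distinct Gromov-boundary endpoints built in Proposition \ref{prop_axis}.
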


\begin{thmintro}[Classification of Anosov-like automorphisms] \label{thm_no_parabolics}
Let $(P, \cF^+, \cF^-)$ be a nontrivial bifoliated plane and $G$ a group with Anosov-like action. Then the action of $G$ on $X(P)$ is such that:
\begin{enumerate}[label=(\roman*)]
    \item Any loxodromic element acts freely on $P$; 
    \item Any elliptic element either fixes a point in $P$ or preserves a {\em scalloped region};
    \item There are no parabolic elements.
\end{enumerate}
\end{thmintro}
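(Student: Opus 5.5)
The plan is to treat the three items separately, working with the action of $g \in G$ on $X(P)$, which by Theorem \ref{thm_quasi_tree} is a quasi-tree. So the usual trichotomy of isometries applies: every $g$ is elliptic (bounded orbits), parabolic, or loxodromic. The key input from the Anosov-like hypothesis is the following dichotomy for nontrivial elements.

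\begin{itemize}
\item[(A)] Either $g$ fixes a point of $P$. Then it fixes a leaf of $\cF^+$ and a leaf of $\cF^-$, hence a vertex of $X(P)$, so it is elliptic.
\item[(B)] Or $g$ fixes no point, in which case I split further according to whether $g$ preserves some leaf.
\end{itemize}

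For case (B), I would use the standard consequence of the Anosov-like axioms. If $g$ fixes a leaf $l$ but no point, then $g$ translates along $l$. The Anosov-like axioms (every leaf fixed by a nontrivial element contains a fixed point, or the configuration is a lozenge or chain of lozenges) should force $l$ to be a boundary leaf of a \emph{scalloped region} preserved by $g$, or give a contradiction.

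If $g$ fixes no leaf at all, I would show that $g$ is loxodromic. Take a leaf $l$ and consider $l, gl, g^2 l, \dots$. The leaves $l$ and $gl$ are disjoint. The key claim is that $l$ separates $g^{-1}l$ from $gl$, or at least that some power of $g$ acts as a "translation" on the leaf space. Here I would use the fact that a homeomorphism of the (non-Hausdorff) leaf space tree with no fixed point has an axis.

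Given that, I would quantify the displacement: $d_{X}(l, g^n l) \to \infty$ linearly. The tool is the nonseparation structure. Any path in $X(P)$ from $l$ to $g^n l$ must pass through vertices meeting each intermediate separating leaf $g^k l$ or leaves crossing it. Combined with the fact that vertices at bounded distance correspond to leaves in a "bounded crossing region", this should give a coarse lower bound of order $n$.

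Item (i) then follows. A loxodromic element cannot be in case (A) or in the scalloped case, since it preserves no point and no leaf. Any loxodromic element therefore fixes no point, and since nontrivial elements of Anosov-like actions fixing a point fix it in the usual way, a loxodromic element acts freely.

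Item (ii) is immediate from the dichotomy. An elliptic element either fixes a point, or fixes a leaf without a fixed point, in which case it preserves a scalloped region. I also need the converse direction of the case analysis: a leafless translating element must be loxodromic rather than elliptic.

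Item (iii) follows because the cases exhaust all $g$ and none yields a parabolic element.

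The main obstacle is the linear growth estimate in the leafless case, that is, ruling out parabolicity. Here the non-Hausdorffness of the leaf spaces and the prong singularities could let $g^n l$ stay at bounded distance in $X(P)$ even while escaping in the leaf space. My first attempt would be an argument by contradiction. Suppose $d(l, g^n l)$ grows sublinearly. Then by the quasi-tree structure there are uniformly bounded paths, and hence leaves crossing many translates $g^k l$. A compactness or density-of-periodic-points argument then produces a leaf or point fixed by a power of $g$, contradicting the assumption that $g$ fixes no leaf.
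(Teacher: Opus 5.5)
There is a genuine gap, and it sits exactly where the content of item (ii) lies. Your subdivision of case (B) is misplaced. By Axiom \ref{Axiom_A1}, a nontrivial element fixing a leaf has a fixed point on that leaf, so the case ``fixes a leaf but no point'' is empty. (This is also why an element acting freely on $P$ acts freely on $\Lambda^\pm$ and has axes there.)

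The elliptic elements that preserve a scalloped region live instead in your ``fixes no leaf'' case. Suppose $g$ acts freely and preserves a scalloped region $U$. Then it translates along both chains of lozenges making up $U$ and fixes no leaf. Yet every $\cF^+$-leaf meeting $U$ crosses every $\cF^-$-leaf meeting $U$, so the $g$-orbit of any such leaf stays in a subset of $X$ of diameter at most $2$. So your claim that an element fixing no leaf is loxodromic is false. The contradiction argument you sketch (sublinear growth forces a power of $g$ to fix a leaf or a point) cannot succeed either, since these elements violate its conclusion.

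A quick sanity check makes the same point: your argument never uses that $P$ is nontrivial. On the trivial plane (e.g.\ for the suspension of an Anosov diffeomorphism of $T^2$), the translations act freely, fix no leaf, have bounded orbits in $X$, and preserve no scalloped region since there are no lozenges. So (ii) fails there, and any correct proof must use nontriviality somewhere.

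What is missing is a mechanism forcing far-apart translates of a leaf to have no common transverse leaf, together with an identification of the exceptions. The observation that a path from $l$ to $g^n l$ must meet every intermediate $g^k l$ gives no lower bound by itself. A single $\cF^-$-leaf can cross arbitrarily many of them, and in the scalloped case all of them.

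The paper argues according to the shape of the axes of a free $g$.
\begin{itemize}
\item If the axis in $\Lambda^+$ is broken, no $\cF^-$-leaf meets two blocks, so Lemma \ref{lem: blocks are long} gives linear growth. If only the axis in $\Lambda^-$ is broken, swap roles using Proposition \ref{prop: X+ X- QI}.
\item If both axes are embedded lines, it invokes \cite[Theorem 3.5.2]{barthelmePseudoAnosovFlowsPlane2026}. Either $P$ is trivial, or $g$ preserves a scalloped region, or there is a $g$-invariant skew-like region on which the maps $s^+, i^+ \colon \cA^+(g) \to \cA^-(g)$ are finite, monotone and commute with $g$. In the last case there is $k$ with $i^+(g^k v) > s^+(v)$, hence no $\cF^-$-leaf meets both $v$ and $g^k v$, and the translation length is at least $1/k$.
\end{itemize}
Without this trichotomy, or an equivalent structural analysis, your linear-growth step has no justification.

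Item (i) is fine as you argue it. Item (iii) by itself also follows from the general fact that isometric actions on quasi-trees have no parabolic elements \cite{MANNING2006}. But (ii) does not follow from your case analysis.
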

{\em Scalloped} refers to a specific feature in $P$, see Definition \ref{def_scalloped}. Note that it is a general fact (see Corollary 3.6 in \cite{MANNING2006}) that groups acting isometrically on a quasi-tree do not have parabolic elements, so the content of the previous theorem is the characterization of elliptic elements.

As was shown by Fenley \cite{fenleyIdealBoundariesPseudoAnosov2012} for flows, and \cite{bonattiActionCircleInfinity2023} generally, $P$ admits a natural compactification by a circle $\partial P$ (see Theorem \ref{thm_boundary}).   Since admitting an Anosov-like action on a nonskew, nontrivial plane forces the group to be countable \cite[Corollary 3.11]{Cam25}, we can apply work of Maher--Tiozzo \cite[Theorem 1.3]{MT18} and obtain a well-defined notion of \emph{hitting measures} on this boundary $\partial P$:

\begin{corintro}\label{cor: random walks}
    Let $(P, \cF^+, \cF^-)$ be a nonskew and nontrivial bifoliated plane and $G$ a group with Anosov-like action. Let $\mu$ be a nonelementary probability measure on $G$ with finite first moment. Then for any base point $x_0\in P$ and almost every sample path $(g_n)_{n\in \mathbb N}$ in $G$, the sequence $(g_n x_0)$ converges to a unique point in the boundary $\partial P$.
\end{corintro}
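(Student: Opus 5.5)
Corollary~\ref{cor: random walks} will follow from Maher--Tiozzo \cite[Theorem 1.3]{MT18}, applied to $X(P)$ and then transferred to $P$ and $\partial P$. That theorem says: if a countable group acts by isometries on a separable Gromov hyperbolic space $X$ and $\mu$ is nonelementary with finite first moment, then for any basepoint $v_0\in X$, almost every sample path $(g_n v_0)$ converges to a point of the Gromov boundary $\partial X$.

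The hypotheses are supplied by earlier results. $G$ is countable by \cite[Corollary 3.11]{Cam25}. $X(P)$ is a quasi-tree, hence hyperbolic, and $G$ acts on it isometrically, by Theorem~\ref{thm_quasi_tree}. Separability holds because $X(P)$ is a graph whose vertex set is handled via a $G$-orbit of a vertex. I read ``nonelementary'' in the statement as referring to the action on $X(P)$; the nonskew, nontrivial hypothesis guarantees this action is of the nonelementary type in Theorem~\ref{thm_generic_plane_version}(iii). So for a vertex $v_0$, almost surely $g_n v_0 \to \xi \in \partial X(P)$.

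To move to $P$, fix $x_0\in P$ and let $v_0$ be the leaf of $\cF^+$ through $x_0$. The map $x\mapsto \cF^+(x)$ is $G$-equivariant, so $g_n v_0$ is the leaf $\cF^+(g_n x_0)$. I will then build a $G$-equivariant correspondence between (a subset of) $\partial X(P)$ and $\partial P$. The intended version: if leaves $\ell_n$ converge in $X(P)$ to a point of $\partial X(P)$, then the sets $\ell_n \subset P\cup\partial P$ shrink to a single point of $\partial P$. This uses the structure of $\partial P$ from Theorem~\ref{thm_boundary}. Leaves far from $v_0$ in $X(P)$ are separated from $x_0$ by many nested leaves. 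Their closures in $P\cup\partial P$ therefore lie in a nested sequence of "half-plane" regions, each bounded by a leaf and its ideal endpoints, and these intervals in $\partial P$ should shrink.

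The main obstacle is precisely this shrinking: nested half-planes could accumulate on a nondegenerate interval of $\partial P$ rather than a point. Examples are a family of nonseparated leaves, or the boundary of a scalloped region or lozenge chain. I will use the non-skew, non-trivial hypothesis together with the fact that convergence to $\partial X(P)$ means the Gromov product $(g_nv_0\cdot g_mv_0)_{v_0}\to\infty$. Then, however the leaves accumulate, one can always find new leaves crossing the limiting region that cut the interval further. Trapping in a bounded-diameter chain of leaves would contradict unbounded Gromov products.

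A fallback avoids the geometric argument for every boundary point. Maher--Tiozzo also give that the hitting measure is nonatomic and supported on the limit set. It then suffices to establish the shrinking for $\nu$-almost every $\xi$, for instance for limits of loxodromic axes, where Theorem~\ref{thm_no_parabolics}(i) and the dynamics of Anosov-like elements give explicit attracting points in $\partial P$.
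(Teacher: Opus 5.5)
\textbf{There is a genuine gap: the transfer from $\partial X(P)$ to $\partial P$ is never proved.} Your outline is the paper's. You apply \cite[Theorem 1.3]{MT18} to the quasi-tree (the paper uses $X^+$, which is equivalent by Proposition \ref{prop: X+ X- QI}), then transfer convergence to $\partial P$ through the leaves $g_nv_0=\cF^+(g_nx_0)$. But that transfer is the whole content of the corollary. ``These intervals should shrink'' and ``one can always find new leaves crossing the limiting region that cut the interval further'' restate the claim rather than argue it.

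Merely nested half-planes need not shrink, and the obstructions you name are not the relevant ones. In the configurations you list (nonseparated leaves, scalloped regions, chains of lozenges), the leaves accumulate on leaves \emph{inside} $P$. The paper excludes this at once: the $\cF^+$-saturation of a compact subset of $P$ has bounded diameter in $X^+$, so $\dX(v_0,g_nv_0)\to\infty$ forces $g_nv_0$ to leave every compact subset of $P$.

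The real obstruction is a sequence of leaves that leaves every compact set while its closures still converge to a nondegenerate arc of $\partial P$. This is what an infinite product region produces: in the trivial plane, whose compactification is a square, the leaves $\{x=n\}$ converge to a whole side. The missing ingredient is the one the paper invokes. An Anosov-like action on a nontrivial plane has no infinite product regions, so leaves escaping compact sets accumulate only on single points of $\partial P$. Convergence in $\partial X^+$ then makes the limit unique.

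\textbf{Your fallback does not work.} The hitting measure is nonatomic and $G$ is countable. So the set you propose to use, the attracting points of loxodromic elements, is countable and therefore null.

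\textbf{How to complete your half-plane picture.} It can be made to work without product regions if you take the separating leaves $h_1,h_2,\dots$ pairwise hyperbolically aligned; Section \ref{subsec: metric on plane} gives many such leaves when $\dX$ is large.
\begin{itemize}
\item Each foliation chart meets at most one $h_i$, so the nested half-planes $H_i$ cut off by the $h_i$ have empty intersection.
\item Suppose their boundary arcs decreased to a nondegenerate arc $K$. Item (4) of Theorem \ref{thm_boundary} gives a leaf $m$ ending at an interior point of $K$ that is not an endpoint of any $h_i$.
\item A tail of $m$ lies in every $H_i$, but $m\not\subset H_i$ for large $i$. So $m$ meets $h_i$ for all large $i$.
\item This is impossible if $m\in\cF^+$, and contradicts alignment if $m\in\cF^-$.
\end{itemize}
You would still need to extract, from convergence in the Gromov boundary, a single such nested family that eventually separates $x_0$ from all $g_nx_0$.

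\textbf{Two smaller points.}
\begin{itemize}
\item $X(P)$ is not separable, since it has uncountably many vertices at mutual distance $\ge 1$. Invoking ``a $G$-orbit'' does not fix this; you should apply \cite{MT18} to a countable $G$-invariant isometrically embedded subgraph.
\item Theorem \ref{thm_generic_plane_version}(iii) needs an extremal Smale class, which is not assumed here. You don't need it, because nonelementarity of $\mu$ is a hypothesis.
\end{itemize}
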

And similarly, by work of Gekhtman--Taylor--Tiozzo \cite[Theorem 1.2]{GTT18}, if $G$ is assumed to be hyperbolic, then one gets that a ``typical'' geodesics in $G$ will converge to a point in $\partial P$:
\begin{corintro}\label{cor: typical geodesic}
    Let $(P, \cF^+, \cF^-)$ be a nonskew and nontrivial bifoliated plane and $G$ a finitely generated Gromov-hyperbolic group with Anosov-like action. Let $S$ be a finite generating set for $G$ and $\nu$ the Patterson--Sullivan measure on $\partial G$ determined by $S$.
    For every $x_0\in P$ and $\nu$–almost every $\eta \in \partial G$, if $(g_n)$ is a geodesic ray in $G$ converging
to $\eta$, then the sequence $g_nx_0$ in $P$ converges to a point in the boundary $\partial P$.
\end{corintro}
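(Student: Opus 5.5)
Corollary \ref{cor: typical geodesic} will be obtained by feeding the geometry of Theorem \ref{thm_generic_plane_version} into \cite[Theorem 1.2]{GTT18} and then transferring the resulting convergence from $\partial X(P)$ to $\partial P$.

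\textbf{Step 1 (checking the hypotheses of \cite{GTT18}).} That theorem concerns a hyperbolic group $G$ with finite generating set $S$ and Patterson--Sullivan measure $\nu$, acting by isometries on a hyperbolic space $X$ with a nonelementary action. Its conclusion is that for $\nu$-almost every $\eta\in\partial G$, and any geodesic ray $(g_n)$ tending to $\eta$, the orbit $g_n x$ converges to a point of the Gromov boundary $\partial X$. Here we take $X=X(P)$, which is hyperbolic and carries an isometric $G$-action by Theorem \ref{thm_quasi_tree}. Nonelementarity should come from case (iii) of Theorem \ref{thm_generic_plane_version}: trivial and skew planes are excluded by hypothesis, so cases (i) and (ii) do not occur. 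One hypothesis needs attention, namely the extremal Smale class assumption in Theorem \ref{thm_generic_plane_version}, which is absent from the corollary. I would first check whether nonelementarity alone holds for an arbitrary Anosov-like action on a nonskew, nontrivial plane; the WPD part is what genuinely uses the Smale condition. If that is not available, I would pass to a suitable subplane or invariant component where the class is extremal, or read the corollary as implicitly including this standing hypothesis. Note also that $X(P)$ is not proper, but \cite{GTT18} does not require properness, so this causes no difficulty.

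\textbf{Step 2 (a boundary map $\partial X(P)\to\partial P$).} Fix $x_0\in P$ and let $\ell$ be a leaf through $x_0$, regarded as a vertex of $X(P)$. Step 1 gives $g_n\ell\to\xi\in\partial X(P)$. We must show that $g_n x_0$ converges in $P\cup\partial P$.

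First, $d_X(\ell,g_n\ell)\to\infty$, so the leaves $g_n\ell$ leave every compact set of $P$; indeed a compact set meets only leaves within bounded $X$-distance of $\ell$. By compactness of $P\cup\partial P$ (Theorem \ref{thm_boundary}), subsequences of $g_n x_0$ accumulate only on $\partial P$.

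Next, take a quasi-geodesic in $X(P)$ from $\ell$ to $\xi$, which is a chain of pairwise intersecting leaves $\ell=\ell_0,\ell_1,\dots$. Using the quasi-tree structure, I would show that the half-planes cut out by leaves far along this chain are nested and have shrinking boundary traces in $\partial P$, so they determine a single point $p(\xi)\in\partial P$. Finally, $g_n\ell$ lies in these nested regions for large $n$, because its Gromov product with $\xi$ tends to infinity, and therefore $g_n x_0\to p(\xi)$.

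\textbf{Main obstacle.} The hard step is showing that the nested regions shrink to a single point of $\partial P$. Such a sequence of leaves could instead accumulate on a nontrivial interval of $\partial P$, for example along a chain of nonseparated leaves. I would attack this with the definition of $\partial P$ from \cite{bonattiActionCircleInfinity2023} via ideal points of leaves, combined with the fact that unbounded $X$-distance forces leaves to alternate between the two foliations. In a nontrivial plane, an infinite alternating chain that is not eventually contained in a scalloped region or a product region should have endpoints converging to one ideal point. If this pointwise shrinking fails in general, I would settle for proving it on a full-measure set of $\xi$, obtained from the WPD elements of Step 1.
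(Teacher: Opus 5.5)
Your overall route is the paper's. You apply \cite[Theorem 1.2]{GTT18} to the action on the quasi-tree ($X(P)$ or $X^+$, which is immaterial by Proposition \ref{prop: X+ X- QI}). You observe that $d_X(\ell,g_n\ell)\to\infty$ forces the leaves $g_n\ell$ to leave every compact set of $P$. You then upgrade this to convergence in $\partial P$.

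Your caveat about the extremal Smale class is well taken. Nonelementarity is only established under that standing hypothesis, and the paper's proof also uses it tacitly, so reading the corollary with it included is the right call.

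The genuine gap is in Step 2. The one step with real content is that the nested regions (equivalently, the leaves $g_n\ell$) shrink to a single point of $\partial P$ rather than to a nondegenerate arc. You leave this open, with only heuristic lines of attack and a fallback via WPD elements that is not an argument. The missing ingredient, and exactly what the paper invokes, is that an Anosov-like action on a nontrivial plane admits no infinite product regions; in particular, endpoints of $\cF^+$-leaves and of $\cF^-$-leaves are dense in $\partial P$. The trivial plane shows why some input of this kind is needed: there the vertical leaves $x=n$ leave every compact set but converge to a whole arc of $\partial P$.

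With this input the step is short.
\begin{itemize}
\item Let $H_1\supset H_2\supset\cdots$ be half-planes, $H_k$ a component of $P\smallsetminus\ell_k$, whose boundary leaves $\ell_k$ leave every compact set. Suppose their traces on $\partial P$ decrease to a nondegenerate arc $I$, and pass to a subsequence with all $\ell_k\in\cF^+$.
\item Then $\bigcap_k H_k=\emptyset$: a point in every $H_k$ could be joined to a point outside $H_1$ by a compact path meeting every $\ell_k$, which is impossible.
\item A ray of $\cF^+$ ending in the interior of $I$ has a tail in $H_k$ and is disjoint from $\ell_k$ for all large $k$. So it lies entirely in $H_k$ for all large $k$, which contradicts the previous point.
\item Hence no $\cF^+$-leaf ends in the interior of $I$, while every $\cF^-$-ray ending there crosses all $\ell_k$ with $k$ large. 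This is the infinite-product-region configuration of the trivial plane, which is excluded.
\end{itemize}

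Your worry about chains of nonseparated leaves is moot. Leaves accumulating on nonseparated leaves stay in a compact part of $P$, and you have already ruled that out.

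Two smaller points.
\begin{itemize}
\item For genuinely nested half-planes, take the bounding leaves on pseudo-intervals in $\Lambda^+$ issuing from $\ell$, rather than the vertices of your quasi-geodesic. Those leaves separate in $P$, and geodesics stay $2$-close to them by Lemma \ref{lem:geodesics_vs_pseudo_interval}.
\item Once shrinking is known, your Gromov-product argument (that $g_n\ell$ eventually lies in every $H_k$) is what gives convergence of the whole sequence rather than just of subsequences. The paper's one-line argument leaves this point implicit.
\end{itemize}
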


In fact, there are many more results about natural measures on $\partial X$ coming from the geometric group theory literature that one can now obtain and translate to $\partial P$ thanks to Theorem \ref{thm_generic_plane_version}, see for instance \cite[Theorem 1.2]{CFFT25}. However, we will not investigate those more in the present article.

\subsection{Connections with CAT(0) cube complexes and dualizable systems} 

As some experts may already have noticed, our graph $X(P)$ is highly reminiscent of the \emph{contact graph} introduced by Hagen for $\mathrm{CAT}(0)$ cube complexes \cite{hagen2014weak}, which has become a very important tool in that theory. 
Indeed one can see the graph we consider here as the bifoliated plane counterpart of Hagen's contact graph and Theorem \ref{thm_generic_plane_version} as the counterpart of \cite[Theorem 4.1]{hagen2014weak}, the main result of that work.

The key aspect shared by these two apparently unrelated settings is a notion of \emph{walls} given by the structure of the space considered: leaves of the foliations for bifoliated planes, and hyperplanes for $\mathrm{CAT}(0)$ cube complexes. The intersection pattern of these walls then leads to $X(P)$ for bifoliated planes and the contact graph for $\mathrm{CAT}(0)$ cube complexes.

One can in fact formalize this connection by using the framework of \emph{dualizable systems} introduced by Petyt--Zalloum \cite{Petyt-Z24}: In the abstract setting of a set $S$ together with a family of walls $W$ (that can formally be defined as bipartitions of $S$), \cite{Petyt-Z24} builds a metric space from any choice of \emph{dualizable system} $\mathcal C \subset 2^{W}$. Such a $\mathcal C$, is, morally, a choice of what should be ``admissible'' sequences of walls. With the correct choice of $\mathcal C$, the quasi-treeness of the graph we build as well as \cite[Theorem 4.1]{hagen2014weak} can be seen as instances of \cite[Proposition 2.14]{PSZ2025stable}.

While we choose not to do the proof of Theorem \ref{thm_quasi_tree} using \cite[Proposition 2.14]{PSZ2025stable} in order to stay more hands-on, we explain the connection between graphs, metrics in the plane, and dualizable systems in a little more depth in Section \ref{subsec: metric on plane}.

\subsection*{Outline}

In Section \ref{sec: definition graphs and quasi-tree}, we introduce several graphs associated to general bifoliated planes, prove that they are quasi-trees and that the distinct graphs we build are all quasi-isometric, getting Theorem \ref{thm_quasi_tree}.
In Section \ref{sec_nonelementary} we start considering Anosov-like actions and construct special loxodromic elements that allow us to prove all of Theorem \ref{thm_generic_plane_version} except for the WPD property, which is dealt with in Section \ref{sec: WPD}. Corollary \ref{cor_acylindrically_hyperbolic} is also deduced at the end of Section \ref{sec: WPD}.
The proof of Theorem \ref{thm_no_parabolics} is done in Section \ref{subsec: classification isometries}.
In Section \ref{sec: genericity} we use Theorem \ref{thm_generic_plane_version} to deduce a more general version of Theorem \ref{thm_generic_flows_version} (see Theorem \ref{thm_general_genericity}), as well as Corollaries \ref{cor: random walks} and \ref{cor: typical geodesic}. We also prove genericity of loxodromic elements with a certain special type of axis (Theorem \ref{thm: genericity broken axis}) and finish the article by discussing the trivial and skew cases in Section \ref{subsec: skew and trivial}.

\subsection*{Acknowledgments}
TB was partially supported by the NSERC Discovery (RGPIN-2024-04412) and Alliance International programs (ALLRP 598447 - 24). KM was partially supported by NSF grant DMS-2505228, and part of this work was accomplished while she was in residence at SLMath, funded by NSF grant DMS-2424139. NP and AZ were partially supported by the NSERC Alliance International program (ALLRP 598447 - 24).

\section{Bifoliated planes and their associated graphs}\label{sec: definition graphs and quasi-tree}
A {\em bifoliated plane} is a plane $P$ with two topologically transverse, possibly singular foliations $\cF^+$ and $\cF^-$, whose singularities are $k$-prongs, with at most one singularity on any leaf.  A {\em $k$-prong} is a structure locally isomorphic, as a bifoliated space, to the image of a neighborhood of $0$ in the complex plane (with the horizontal and vertical or real/imaginary axis foliations), under the semi-branched cover $z \mapsto z^{k/2}$.  
A {\em face} of a leaf $l$ is the boundary of a connected component of $P \setminus l$.  Nonsingular leaves have a single face, and $k$-prongs have exactly $k$ faces.
We define three natural graphs that capture the intersection pattern of leaves.  

\begin{defi} \label{def: graph X^+}
When $(P, \cF^+, \cF^-)$ is a bifoliated plane, 
we define the graph $X^+(P)$ by 
    \begin{itemize}
    \item \textbf{Vertices:} each leaf of $\cF^{+}$ is a vertex. 
    \item \textbf{Edges:} two vertices (leaves) are adjacent whenever they are both intersected by a common non-singular leaf of $\cF^{-}$.
\end{itemize}
The graph $X^-(P)$  is defined similarly, reversing the roles of $+$ and $-$ above.  When there is no ambiguity between bifoliated planes, we will simply write $X^+$ or $X^-$.   
\end{defi}

The definition of edges can be equivalently reformulated in terms of faces, as follows: 
{\em two leaves $l, l'$ in $\cF^+$ are joined by an edge if and only if they meet a common face of a leaf of $\cF^-$.} 

In addition to $X^+$ and $X^-$, we define
\begin{defi}
    When $(P, \cF^+, \cF^-)$ is a bifoliated plane, define $X = X(P)$ to be the graph where each leaf of $\cF^+ \cup \cF^-$  is a vertex, and two vertices are connected by an edge if and only if they intersect.  
\end{defi}
Observe that any foliation-preserving homeomorphism of $P$ acts naturally on each of $X$, $X^+$ and $X^-$ by isometries.

\begin{example} \label{ex:trivial_skew}
There are two special cases of bifoliated planes which play a significant role in Anosov dynamics.  These are the {\em trivial plane}, 
isomorphic to $\mathbb{R}^2$ with the horizontal and vertical foliations, and the {\em skew plane}, which is the isomorphism class of the bi-infinite strip $\{(x,y) : x< y<x+1 \} \subset \mathbb{R}^2$
with the induced horizontal and vertical coordinate foliations. 
The reader can easily verify that in the trivial case, $X^\pm$ and $X$ are all complete graphs, i.e., of diameter 1, and in the skew case, both $X^\pm$ and $X$ are quasi-isometric to $\mathbb{R}$.  
\end{example}

In this section, we show that these objects are all coarsely the same, and all quasi-isometric to trees.   
\begin{thm} \label{thm:quasi-tree}
For any bifoliated plane $(P, \cF^+, \cF^-)$, the graphs $X^+$, $X^-$ and $X$ are all quasi-trees and are quasi-isometric to each other.
\end{thm}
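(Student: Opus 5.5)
We will first compare the three graphs, and then prove that $X^+$ is a quasi-tree. The case of $X^-$ follows by symmetry, and $X$ follows from the comparison.

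\textbf{Quasi-isometries.} The inclusion of the vertex set of $X^+$ into that of $X$ should be a quasi-isometry. Every vertex of $X$ is either a $+$ leaf or a $-$ leaf, and a $-$ leaf meets some $+$ leaf, so the image is $1$-dense. If $l,l'$ are adjacent in $X^+$, there is a nonsingular $-$ leaf meeting both, so $d_X(l,l')\le 2$. Conversely, take a geodesic path in $X$ from $l$ to $l'$ between $+$ leaves. It alternates $+$ leaf, $-$ leaf, $+$ leaf. Consecutive $+$ leaves $l_i,l_{i+1}$ both meet a $-$ leaf $m$, and both intersections can be taken in a common face of $m$ after at most one extra step. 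If $m$ is singular and $l_i,l_{i+1}$ hit different faces, we insert the $+$ leaf through the singular point of $m$, which meets both faces. Hence $d_{X^+}\le 2d_X$. Symmetrically $X^-$ is quasi-isometric to $X$.

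\textbf{Quasi-tree.} For this we will use Manning's bottleneck criterion: a geodesic space $Y$ is a quasi-tree if there is $\Delta$ such that, for all $x,y$, every path from $x$ to $y$ passes within $\Delta$ of the midpoint of some (equivalently, every) geodesic from $x$ to $y$. We will work in $X^+$ and exploit planar separation. A nonsingular leaf $l$ of $\cF^+$ separates $P$ into two half-planes, and a singular one into $k$ complementary regions.

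Take a geodesic $l_0=x,l_1,\dots,l_n=y$ in $X^+$ and a middle vertex $l_j$. We claim that any path $x=k_0,\dots,k_r=y$ in $X^+$ must contain a vertex within distance $2$ of $l_j$. Since $l_0$ and $l_n$ are at distance $n$, they lie in different components of $P\setminus l_j$ when $0<j<n$. Indeed, if they lay in the same component, one could otherwise shortcut the geodesic, and making this rigorous is part of the main step below.

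Consecutive leaves $k_i,k_{i+1}$ are joined by an arc of a $-$ leaf face. Concatenating these arcs, together with subarcs of the $k_i$ themselves, gives a continuous curve in $P$ from $x$ to $y$. This curve must therefore cross $l_j$, either through some $k_i$ meeting a $-$ leaf that intersects $l_j$, or through some $k_i=l_j$. In either case $d_{X^+}(k_i,l_j)\le 2$, using the face argument above to handle singular leaves.

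\textbf{Main obstacle.} The hardest step is the separation claim: interior vertices of a geodesic in $X^+$ separate its endpoints in $P$, up to bounded error. We plan to prove it by showing that if $m$ is any $+$ leaf and $x,y$ lie on the same side of $m$, then a path from $x$ to $y$ can be pushed off to that side without increasing length by more than a constant. The approach is to replace any subpath crossing $m$ by one staying in that side, using that the $-$ leaves through $m$ foliate a neighborhood of $m$ transversally, so that the $+$ leaves on that side near $m$ are all mutually adjacent.

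An alternative, if this proves messy, is the following. One could show directly that the midpoint $l_j$ of a geodesic separates $x$ from $y$, arguing by contradiction: if both lay in one region $U$ of $P\setminus l_j$, the leaves of the geodesic on the far side of $l_j$ would give a detour that can be shortcut through $-$ leaves crossing $l_j$. This would contradict geodesicity with an error of at most $2$. Nonsingular leaves and prongs are handled uniformly by working with faces.
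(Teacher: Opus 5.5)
Your comparison of $X^\pm$ with $X$ is fine and is essentially the paper's argument, including inserting $\cF^+(p)$ when a singular $-$ leaf is met on different faces.

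\textbf{The gap.} The problem is in the quasi-tree part. You assert that, for $0<j<n$, the vertex $l_j$ of a geodesic separates $x$ from $y$ in $P$, so that every path from $x$ to $y$ must cross $l_j$. Both claims are false.

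Take $P=\mathbb{R}^2\setminus\{(0,t): t\ge 0\}$ with the restricted horizontal and vertical foliations. Let $A=\{y=1,\,x<0\}$, $B=\{y=1,\,x>0\}$ and $W=\{y=-1\}$. No vertical leaf meets both $A$ and $B$, while $\{x=-1\}$ joins $W$ to $A$ and $\{x=1\}$ joins $W$ to $B$. So $A,W,B$ is a geodesic of length $2$ with middle vertex $W$. However, $A$ and $B$ lie in the same component $\{y>-1\}\cap P$ of $P\setminus W$. The path $A,\{y=-1/2\},B$ even has a realization disjoint from $W$.

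Your fallback contradiction argument has nothing to work with either, since this geodesic has no vertices on the far side of $W$ to shortcut. Geodesics in $X^+$ can genuinely leave the ``core'' between their endpoints. Here $W$ is not in the pseudo-interval $\llbracket A,B\rrbracket=[A,u_1]\cup[u_2,B]$, where $u_1,u_2$ are the two nonseparated half-lines of $\{y=0\}$.

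\textbf{What is missing.} You need the correct bottleneck object: the pseudo-interval $\llbracket x,y\rrbracket$ in the non-Hausdorff leaf space $\Lambda^+$. Its leaves really do separate $x$ from $y$. Consequently every path $c$ from $x$ to $y$ passes within distance $1$ of each point of $\llbracket x,y\rrbracket$, because $\pi_{\Lambda^+}(c)$ contains $\llbracket x,y\rrbracket$ and lies in the $1$-neighbourhood of $c$ in $X^+$.

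It then remains to show that every vertex of a geodesic lies within distance $2$ of $\llbracket x,y\rrbracket$; this gives Manning's criterion with constant $3$. The paper proves this as follows.
\begin{itemize}
\item Let $N_1$ be the $\cF^+$-saturation of the $\cF^-$-saturation of $\llbracket x,y\rrbracket$, i.e.\ the leaves at distance at most $1$ from it.
\item If a geodesic vertex $v$ were at distance at least $3$, a unique boundary leaf $l$ of $N_1$ would separate $v$ from $\llbracket x,y\rrbracket$.
\item The geodesic must then cross $l$ going out and again coming back, along two non-consecutive edges whose $-$ leaves both meet $l$.
\item Replacing the whole excursion by the single vertex $l$ gives a strictly shorter path, a contradiction.
\end{itemize}

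Your ``push-off'' idea is in this spirit, since it shortcuts an excursion through the leaf it crosses. But the shortcut has to go through boundary leaves of the neighbourhood of $\llbracket x,y\rrbracket$, not through $l_j$. Without the leaf-space structure, which is exactly what handles nonseparated leaves as in the example above, there is no reason for the separation you assert.
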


Section \ref{sec_nonelementary} gives many instances when this tree is additionally nonelementary.

\subsection{Background: leaf spaces}

To analyze the geometry of $X^+$, we will frequently pass back and forth between the plane $P$, the graph $X^+$ and the {\em leaf space} of $\cF^+$.

Recall that the {\em leaf space} of a foliation is the quotient space obtained by collapsing each leaf to a point (with the topology as a quotient of $P$).  We denote the leaf space of $\cF^+$ by $\Lambda^+$. 
Whenever $\Lambda^+$ is Hausdorff, it has the structure of a real tree. However, it is typically non-Hausdorff, containing leaves which cannot be separated by open sets.  We call such leaves {\em nonseparated} -- see Figure \ref{fig:scalloped-region} for an example where a sequence of leaves limits onto an infinite set of leaves.
However, $\Lambda^+$ still has the structure of what Fenley calls a non-Hausdorff tree (see \cite{fenleyPseudoAnosovFlowsIncompressible2003})
\begin{defi}[Fenley]
A {\em non-Hausdorff tree} is a topological space $T$ satisfying the following: \begin{itemize}
    \item (1-dimensional) $T$ is a countable union of open intervals.
    \item (connected) For any $x, y$ in $T$, there is a finite chain of intervals $I_0, \ldots I_n$ with $x\in I_0$, $y \in I_n$ and $I_i \cap I_{i+1} \neq \emptyset$ for all $i$.
    \item (simply connected) If $J_1$ and $J_2$ are half-open intervals based at a point $x$ and intersecting only at $x$, and $y_i \in J_i$, then any finite chain of intervals from $y_1$ to $y_2$ (as in the second item) must contain $x$ in at least one of the intervals. 
\end{itemize}
\end{defi}
In the above definition, $T$ is permitted to be Hausdorff, in which case it is an $\mathbb{R}$-tree. In general, 
the non-Hausdorff property means that there may not be a unique shortest path between any two points; however, there is a unique ``pseudo-interval" in the following sense: 
\begin{defi}[Pseudo-interval]
For $x,y$ in a non-Hausdorff tree $T$
the {\em pseudo-interval} from $x$ to $y$, denoted $\llbracket x,y \rrbracket$ is the union of $\{x,y\}$ with the set of points $p$ in $T$ such that $x$ and $y$ lie in distinct connected components of $T \setminus \{p\}$. 
\end{defi}
Equivalently, $\llbracket x,y \rrbracket$ is the intersection of all continuous paths in $T$ from $x$ to $y$.  It 
can be decomposed into sub-intervals bounded by nonseparated leaves, as follows. 
\begin{thm}[See \cite{fenleyPseudoAnosovFlowsIncompressible2003}, section 3] \label{thm_pseudo_interval_decomp}
For any 
$x, y$ in $T$
there is a unique decomposition 
\[
\llbracket x,y\rrbracket = \bigsqcup_{i=1}^n [x_i,y_i] ,
\]
where $x_1=x$, $y_n=y$, and, for all $i$:
\begin{itemize}
    \item $[x_i,y_i]$ is an interval, and
    \item $y_i$ is non-separated from $x_{i+1}$ for each $i$
\end{itemize}
Moreover, this decomposition is minimal in the sense that any continuous path from $x$ to $y$ %
contains each interval $[x_i, y_i]$, %
and distinct $[x_i, y_i]$ have empty intersection.
\end{thm}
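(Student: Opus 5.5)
The plan is to build one explicit path from $x$ to $y$ out of a finite chain of intervals. I would then identify $\llbracket x,y\rrbracket$ with the set of points of that path that separate $x$ from $y$, and read the decomposition off the finitely many arcs making up the path. Throughout, ``interval'' means one of the open intervals $I\subset T$ given by the 1-dimensionality axiom, each homeomorphic to $\mathbb{R}$. A ``continuous path'' means a concatenation of finitely many closed arcs, each contained in some interval, as produced by chains.

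\emph{Step 1 (a reference path).} By connectedness, choose a chain $I_0,\dots,I_m$ from $x$ to $y$ with $m$ minimal. Pick $z_j\in I_{j-1}\cap I_j$, and set $z_0=x$, $z_{m+1}=y$. Let $\gamma$ be the concatenation of the closed arcs $\sigma_j=[z_j,z_{j+1}]\subset I_j$. Since $\llbracket x,y\rrbracket\setminus\{x,y\}$ consists of points whose removal disconnects $x$ from $y$, every such point lies on every path. In particular $\llbracket x,y\rrbracket\subset\gamma$, and $\llbracket x,y\rrbracket$ is contained in the intersection of all paths. Conversely, if $p$ lies on every path, then $T\setminus\{p\}$ admits no chain from $x$ to $y$, so $p$ separates. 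This gives the stated equivalence with the intersection of all paths, and the minimality statement of the theorem is then automatic.

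\emph{Step 2 (convexity on each arc).} I claim that on each arc $\sigma_j$ the set $C_j=\sigma_j\cap\llbracket x,y\rrbracket$ is a (possibly degenerate or empty) subinterval. Let $a,b\in C_j$ and let $c$ lie between them in $\sigma_j$. Suppose $c$ did not separate $x$ from $y$. Then there is a chain avoiding $c$ from $x$ to $y$. Splicing it with the subarcs of $\sigma_j$ near $c$ gives a chain joining points $y_1,y_2$ on the two half-open intervals $J_1,J_2\subset\sigma_j$ based at $c$ that avoids $c$. This contradicts the simple connectivity axiom. The splicing needs care, since the chain must pass through $a$ and $b$, and I use this to route it back to the two sides of $c$. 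Hence $\llbracket x,y\rrbracket$ is a union of at most $m+1$ subintervals of the arcs. Merging those that share endpoints, and listing them in the order in which $\gamma$ traverses them, gives pieces $[x_i,y_i]$ with $x_1=x$ and $y_n=y$, and distinct pieces are disjoint. The ordering along $\gamma$ should be independent of the choice of $\gamma$: every path contains all the pieces, and simple connectivity forbids two paths from visiting two pieces in opposite orders.

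\emph{Step 3 (closedness and non-separation; the main obstacle).} It remains to show two things: each piece is closed, so its endpoints $y_i$ and $x_{i+1}$ themselves separate $x$ from $y$; and $y_i$ is non-separated from $x_{i+1}$. Between $y_i$ and $x_{i+1}$ the path $\gamma$ makes a ``detour'' containing no separating points. For each interior point $q$ of the detour there is a chain from $x$ to $y$ avoiding $q$. Comparing such a chain with $\gamma$ via simple connectivity, I would show that it must leave $\gamma$ inside a neighbourhood of $y_i$ and re-enter inside a neighbourhood of $x_{i+1}$.

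Shrinking these neighbourhoods, I expect to get a sequence of points $w_k$, lying on intervals crossing these bypass chains, that converges both to $y_i$ and to $x_{i+1}$. This shows $y_i$ and $x_{i+1}$ are non-separated. The same argument should show they are not themselves bypassable, since any bypass of $y_i$ would have to pass through a further point of the detour, which by minimality of $m$ leads to a contradiction. I expect this bookkeeping of chains near the detour to be the hardest part.

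\emph{Step 4 (uniqueness).} Uniqueness is immediate. The set $\llbracket x,y\rrbracket$ is intrinsic, and its decomposition into maximal closed intervals, ordered along any path, is determined by it. Requiring consecutive pieces to have non-separated endpoints forbids any further splitting or merging of the pieces.
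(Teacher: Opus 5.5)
The paper gives no proof of this statement; it is quoted from Fenley. So your proposal has to stand on its own, and it does not: Step 3, which carries everything specific to non-Hausdorff trees, only describes what you expect to happen. Steps 1, 2 and 4 are essentially fine. In fact the convexity in Step 2 is more direct than you suggest: a chain from $x$ to $y$ avoiding $c$ must contain $a$ and $b$, and its sub-chain between them joins the two half-arcs of $\sigma_j$ at $c$ without containing $c$.

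Closedness of the pieces is not automatic. Take the branched line $T=(\R\times\{1,2\})/\big((t,1)\sim(t,2)\text{ for }t<0\big)$ with $x=(-1,1)$ and $y=(1,2)$. Then $\llbracket x,y\rrbracket=[-1,1]\times\{2\}$. Now use the non-minimal chain $I_0=I_1=\R\times\{1\}$, $I_2=\R\times\{2\}$ with $z_1=(\tfrac12,1)$. The separating points on $\sigma_0$ form $[-1,0)\times\{1\}$, which is not closed in $\sigma_0$. So minimality of $m$ must be used in an essential way, and your sentence ``any bypass of $y_i$ would have to pass through a further point of the detour'' does not say how. Likewise, nothing in the proposal constructs the points $w_k$ or shows that they converge to both $y_i$ and $x_{i+1}$. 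That non-separation is the actual content of the theorem.

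What is missing is an analysis of the overlaps $O_j=I_j\cap I_{j+1}$ of your minimal chain.

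\begin{enumerate}[label=(\roman*)]
\item Each overlap is a single interval. The simple-connectivity axiom forces $O_j$ to be one open interval in both $I_j$ and $I_{j+1}$. Otherwise $I_{j+1}$ alone would be a chain joining the two sides of a point of $I_j\setminus I_{j+1}$ lying between two components.
\item Non-adjacent intervals are disjoint. Minimality gives $I_j\cap I_k=\emptyset$ for $|j-k|\geq 2$, and also $x\notin I_1$, $y\notin I_{m-1}$. So $O_{j-1}$ and $O_j$ are disjoint in $I_j$, and $\sigma_j$ contains the closed arc $[\beta_j,\alpha_j]$ between them. Here $\beta_j$ is the endpoint of $O_{j-1}$ facing $O_j$, $\alpha_j$ is the endpoint of $O_j$ facing $O_{j-1}$, and we set $\beta_0=x$, $\alpha_m=y$.
\item Compare the ends of $O_j$ in $I_j$ and in $I_{j+1}$. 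If $\gamma$ crosses $O_j$, entering through one end and leaving through the other, then every point of $O_j$ separates $x$ from $y$. In that case $[\beta_j,\alpha_j]$, $O_j$ and $[\beta_{j+1},\alpha_{j+1}]$ merge into one arc. If instead $\gamma$ enters and leaves $O_j$ through the same end, then $\alpha_j\in I_j$ and $\beta_{j+1}\in I_{j+1}$ are distinct, since neither lies in $O_j$. They are limits of a common sequence in $O_j$, hence non-separated. No point of $O_j$ separates, because a path can turn around before reaching it.
\item Every point $p$ of the arcs and of the crossed overlaps does separate $x$ from $y$. By minimality, the parts of $\gamma$ before and after $p$ avoid $p$. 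Splicing them with a chain from $x$ to $y$ avoiding $p$ would join the two sides of $p$, contradicting the simple-connectivity axiom.
\end{enumerate}

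This gives closedness of the pieces and non-separation of consecutive endpoints at the same time. Your Steps 1 and 4 then give minimality and uniqueness.
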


Note that the intervals $[x_i, y_i]$ in the decomposition are allowed to be degenerate, i.e., with $x_i = y_i$.  For example, if $x$ and $y$ are nonseparated, then the associated decomposition is given by $\llbracket x,y\rrbracket = [x,x] \cup [y,y]$.  In general one may have arbitrarily many such degenerate pairs in the decomposition of a pseudo-interval.

\subsection{$X^+$ is a quasi-tree}
To prove Theorem \ref{thm:quasi-tree}, we first 
show the following. 

\begin{prop} \label{prop:quasi-tree}
 For any bifoliated plane $(P, \cF^+, \cF^-)$, the graph $X^+$ is quasi-isometric to a tree. 
\end{prop}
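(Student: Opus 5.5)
The plan is to apply Manning's bottleneck criterion: a geodesic metric space $Y$ is quasi-isometric to a tree if and only if there is $\Delta>0$ such that for all $x,y\in Y$ there is a midpoint $m$ with the property that every path from $x$ to $y$ passes within $\Delta$ of $m$. We verify this for $X^+$, taking as candidate bottlenecks the leaves lying on the pseudo-interval $\llbracket x,y\rrbracket$ in the leaf space $\Lambda^+$.

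The first step is to record the basic link between paths in $X^+$ and the topology of $\Lambda^+$. If $l,l'\in\cF^+$ are adjacent in $X^+$, then a common nonsingular leaf $f\in\cF^-$ meets both. The set of $\cF^+$-leaves crossed by $f$ is an honest interval $I_f\subset\Lambda^+$, because a leaf of $\cF^-$ meets each $\cF^+$-leaf at most once; this holds since there is no Reeb-type behaviour in a bifoliated plane. Thus an edge path $l_0,\dots,l_k$ in $X^+$ yields a chain of intervals $I_{f_1},\dots,I_{f_k}$ from $l_0$ to $l_k$. By the simple connectivity axiom and Theorem \ref{thm_pseudo_interval_decomp}, every leaf of every sub-interval $[x_i,y_i]$ of $\llbracket l_0,l_k\rrbracket$ lies in some $I_{f_j}$, and is therefore at $X^+$-distance at most $1$ from some vertex $l_{j-1}$ or $l_j$ of the path. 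Consequently every path between $x$ and $y$ passes within distance $1$ of every point of $\llbracket x,y\rrbracket$, and in particular of the nonseparated endpoints $x_i,y_i$.

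The second step is to show that $\llbracket x,y\rrbracket$ coarsely traces a geodesic, so that a midpoint can be chosen on it. We check two facts. First, within a single interval $[x_i,y_i]$, consecutive ``steps'' can be realized: any two leaves of $[x_i,y_i]$ are joined by a path in $X^+$ staying within bounded distance of $[x_i,y_i]$. This follows by covering the compact interval with finitely many $I_f$'s. Second, a nonseparated pair $y_i,x_{i+1}$ is at distance at most $2$, since leaves in a common sequence limiting on both are crossed by $\cF^-$-leaves close to each. Together these give a path $\gamma$ from $x$ to $y$ contained in the $2$-neighbourhood of $\llbracket x,y\rrbracket$. Combining this with the first step, any geodesic from $x$ to $y$ and $\gamma$ are uniformly Hausdorff close. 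Choosing $m$ to be the midpoint of a geodesic, $m$ is within bounded distance of some leaf $p\in\llbracket x,y\rrbracket$, and every path from $x$ to $y$ passes within distance $1$ of $p$. Manning's criterion then holds with a uniform $\Delta$, say $\Delta=4$.

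The main obstacle is the interval-and-crossing bookkeeping in the first step. Singular leaves must be treated using faces, which is why edges are defined via nonsingular $\cF^-$-leaves, equivalently via faces. One also has to make precise that a chain of the $I_f$ must cover each full sub-interval $[x_i,y_i]$, not merely hit it. The plan here is to apply the simple connectivity axiom leaf by leaf: for $p$ in the interior of $[x_i,y_i]$, removing $p$ separates $x$ from $y$, so some $I_{f_j}$ contains $p$. The endpoints $x_i,y_i$ need separate care, handled via the nonseparation structure near them.
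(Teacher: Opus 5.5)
Your proposal is correct, but it gets the key estimate by a different route from the paper.

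\textbf{Shared structure.} Both arguments end the same way. The projection to $\Lambda^+$ of any path from $x$ to $y$ contains $\llbracket x,y\rrbracket$ and lies in the $1$-neighbourhood of that path. So it suffices to know that the midpoint of a geodesic is uniformly close to $\llbracket x,y\rrbracket$.

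\textbf{The paper's route.} The paper proves this via Lemma \ref{lem:geodesics_vs_pseudo_interval}, a planar shortcut argument. If a geodesic reached distance $3$ from $\llbracket x,y\rrbracket$, a leaf $l$ on the boundary of the saturation $N_1$ would be crossed by two non-consecutive edges. Replacing the excursion by $l$ would then shorten the geodesic.

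\textbf{Your route.} You instead show that $\llbracket x,y\rrbracket$ is uniformly coarsely connected in $X^+$. This uses finitely many transversals covering each $[x_i,y_i]$, with two of them meeting at any prong leaf the interval crosses through non-adjacent faces, together with $d_{X^+}(y_i,x_{i+1})\le 2$. You combine this with the fact that every path passes within $1$ of every point of $\llbracket x,y\rrbracket$.

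\textbf{The step to write out.} The direction you actually need is that the geodesic $\sigma$ stays near $\llbracket x,y\rrbracket$. This does not follow from $\gamma\subset N_3(\sigma)$ alone, and it is the real content replacing Lemma \ref{lem:geodesics_vs_pseudo_interval}. It should be written as a sweeping argument:
\begin{enumerate}
\item Choose $p_0=x,\dots,p_N=y$ in $\llbracket x,y\rrbracket$ with $d_{X^+}(p_j,p_{j+1})\le 2$, and choose $t_j$ with $d_{X^+}(p_j,\sigma(t_j))\le 1$.
\item Then $|t_{j+1}-t_j|\le 4$, $t_0\le 1$ and $t_N\ge L-1$.
\item Hence some $t_j$ is within $2$ of $L/2$, so the midpoint is within $3$ of $p_j$, and $\Delta=4$ follows.
\end{enumerate}

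\textbf{Comparison.} Your route is more modular: it is a general coarse principle fed by purely local inputs, and it avoids the global topology of $\partial N_1$. The paper's route gives the sharper, reusable statement that every vertex of every geodesic lies within $2$ of the pseudo-interval, without needing coarse connectivity of pseudo-intervals.
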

To do this, we relate geodesics in $X^+$ to paths in $P$ and then apply a criterion of Manning. 

\begin{defi}[Realization in $P$ and projection to $\Lambda^+$] \label{def_projection_maps}
For a path $\gamma = v_1, e_1, v_2, e_2, \ldots v_n \subset X^+$ where $e_i$ is an edge between $v_i$ and $v_{i+1}$, 
we say that a path $\gamma_P$ in $P$ is a {\em realization of $\gamma$} if $\gamma_P$ consists of a concatenation of segments $s_1 t_1 s_2 t_2...s_n$, where $s_i \subset v_i$ and $t_i$ is a subset of a leaf of $\cF^-$ representing $e_i$.  

We define $\pi_{\Lambda^+}(\gamma)$ to be the projection of any realization of $\gamma$ to $\Lambda^+$.  
\end{defi}
In the vocabulary of \cite{fenleyIdealBoundariesPseudoAnosov2012}, $\gamma_P$ is a {\em polygonal path} in $P$. 
The fact that $\Lambda^+$ is a non-Hausdorff tree implies that $\pi_{\Lambda^+}(\gamma)$ is well-defined, as follows: 
\begin{lem}
$\pi_{\Lambda^+}(\gamma)$ is independent of the choice of realization of $\gamma$.
\end{lem}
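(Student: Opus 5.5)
The plan is to show that the projection of any realization of $\gamma$ depends only on the vertex sequence $v_1,\dots,v_n$. Concretely, I will prove the identity
\[
\pi_{\Lambda^+}(\gamma_P) \;=\; \bigcup_{i=1}^{n-1} \llbracket v_i, v_{i+1} \rrbracket .
\]
The right-hand side is intrinsic to $\Lambda^+$, so this gives independence of the realization.

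First, the pieces $s_i$ are harmless. Each $s_i$ lies in the leaf $v_i$, so it projects to the single point $v_i \in \Lambda^+$. That point is an endpoint of the projection of $t_{i-1}$ and of $t_i$, so it is already accounted for. It therefore suffices to show that, for each $i$, the projection of $t_i$ equals $\llbracket v_i, v_{i+1}\rrbracket$. Recall that $t_i$ is an arc of a leaf (or face) $f$ of $\cF^-$ running from a point of $v_i$ to a point of $v_{i+1}$.

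For the inclusion $\llbracket v_i,v_{i+1}\rrbracket \subset \pi_{\Lambda^+}(t_i)$, note that $\pi_{\Lambda^+}(t_i)$ is a continuous path in $\Lambda^+$ from $v_i$ to $v_{i+1}$. The pseudo-interval is the intersection of all such paths, equivalently it is contained in every such path by Theorem \ref{thm_pseudo_interval_decomp}.

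For the reverse inclusion, take a point $l \in \pi_{\Lambda^+}(t_i)$ other than the endpoints, so $l$ is a leaf of $\cF^+$ crossed by $t_i$ at an interior point. I will argue in three steps.
\begin{enumerate}
\item Since $\cF^+$ and $\cF^-$ are topologically transverse and $P$ is a plane, a leaf (or face) of $\cF^-$ meets each leaf of $\cF^+$ at most once. This is the standard fact that there is no transversal returning to the same leaf in a plane, obtained from a Poincaré--Bendixson / Jordan curve argument. Consequently $t_i$ genuinely crosses $l$ from one complementary component of $l$ (or of one of its faces, if $l$ is a $k$-prong) to another.
\item Hence $v_i$ and $v_{i+1}$ lie in distinct components of $P\setminus l$.
\item These components are $\cF^+$-saturated open sets. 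Their images give a partition of $\Lambda^+\setminus\{l\}$ into disjoint open sets with $v_i$ and $v_{i+1}$ on different sides, so $l$ disconnects $v_i$ from $v_{i+1}$.
\end{enumerate}
By the definition of the pseudo-interval, $l \in \llbracket v_i,v_{i+1}\rrbracket$.

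The main point needing care is step (1) together with the singular case. The arc $t_i$ may pass through a prong leaf of $\cF^+$, and one has to check that it enters and exits through distinct complementary regions. Transversality at the crossing point handles this, since a nonsingular $\cF^-$-leaf meets a $k$-prong leaf away from the singularity. The injectivity in step (1) is the other ingredient I would verify carefully. I would deduce it by closing up a segment of $t_i$ between two hypothetical intersection points with a segment of $l$ to form a Jordan curve, and then deriving a contradiction from the transverse foliation inside the disk it bounds via an index/Poincaré--Bendixson argument.
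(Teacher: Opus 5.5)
Your proposal is correct and follows essentially the same route as the paper. The paper observes that each $\cF^-$-segment $t_i$ projects to an interval in $\Lambda^+$ from $v_i$ to $v_{i+1}$, and then invokes uniqueness of such intervals in the non-Hausdorff tree $\Lambda^+$. You make that uniqueness explicit by showing the projection equals $\llbracket v_i, v_{i+1}\rrbracket$, using the separation characterization of pseudo-intervals together with the fact that leaves of $\cF^+$ and $\cF^-$ meet at most once.
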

\begin{proof}
If $t$ is a segment of a leaf of $\cF^-$ connecting leaves $v$ and $w$ in $\cF^+$, then the projection of $t$ to $\Lambda^+$ is an interval with endpoints $v$ and $w$. Since $\Lambda^+$ is a non-Hausdorff tree, there is a unique such interval. 
\end{proof}

The fact that all paths from $x$ to $y$ contain the pseudo-interval $\llbracket x, y\rrbracket $ means that $\llbracket x, y\rrbracket $ behaves similarly to a geodesic.  The next lemma more precisely relates pseudo-intervals to geodesics in $X^+$. 

\begin{lem}\label{lem:geodesics_vs_pseudo_interval}
  Let $\gamma$ be a geodesic path between $x$ and $y$ in $X^+$. Then every vertex $v\in \gamma$ is distance at most 2 in $X^+$ from 
  a point of $\llbracket x, y\rrbracket$. 
\end{lem}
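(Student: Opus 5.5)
Let $\gamma = v_1, e_1, \ldots, v_n$ be a geodesic in $X^+$ with $v_1 = x$ and $v_n = y$. Fix a realization $\gamma_P = s_1 t_1 \cdots s_n$ in $P$. Its projection $\pi_{\Lambda^+}(\gamma)$ is a continuous path in $\Lambda^+$ from $x$ to $y$, so it contains $\llbracket x,y\rrbracket$. The plan is to show that every vertex of $\gamma$ lies within distance $2$ of the pseudo-interval, using the facts that $\gamma$ is a geodesic and that $\Lambda^+$ is a non-Hausdorff tree.

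Let $v = v_k$ be a vertex with $v \notin \llbracket x,y\rrbracket$; otherwise there is nothing to prove. Consider the set $T\setminus \llbracket x,y\rrbracket$ and the component $C$ containing $v$. The projection of the subpath $t_{i}s_{i+1}\cdots$ starting at $v$ must leave $C$ on both sides, since it begins at $x$ and ends at $y$, which are in $\llbracket x,y\rrbracket$. By simple connectivity of the non-Hausdorff tree, I expect $C$ to attach to $\llbracket x,y\rrbracket$ at a single point $p$, or possibly at a family of mutually nonseparated points. So the projected path enters $C$ through $p$ and exits through $p$ (or through points nonseparated from it).

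Let $j<k$ be the last index whose edge $t_j$ projects through $p$ going into $C$, and let $m\ge k$ be the first index whose edge $t_m$ projects back through $p$ (or through a point nonseparated from $p$). The $\cF^-$-segments $t_j$ and $t_m$ then both cross the leaf $p$, or cross nonseparated leaves near $p$.

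\emph{Hausdorff-attachment case.} Here $v_j$ and $v_{m+1}$ are both intersected by leaves of $\cF^-$ that cross $p$. Then $d(v_j,p)\le 1$ and $d(p,v_{m+1})\le 1$, and since $p\in\llbracket x,y\rrbracket$, geodesicity gives $m+1-j\le 2$. Since $j\le k\le m+1$, we conclude $d(v_k, v_j)\le 2$. Actually this bounds $d(v_k,p)$: we have $d(v_k,p)\le \min(k-j+1,\, m+1-k+1)$, and the constraint $m+1-j\le2$ yields $d(v_k,p)\le 2$.

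\emph{Nonseparated case.} The main obstacle is the case where the path exits $C$ not through $p$ itself but through a leaf $p'$ that is nonseparated from some point of $\llbracket x,y\rrbracket$. This happens, for instance, when $C$ is the branch hanging off a nonseparated pair $y_i, x_{i+1}$. I would handle it using the decomposition in Theorem~\ref{thm_pseudo_interval_decomp}. If $\pi_{\Lambda^+}(\gamma)$ passes from the side of $y_i$ to the side of $x_{i+1}$, then some edge segment $t_\ell$ must cross leaves accumulating on both. Equivalently, a single $\cF^-$ leaf passes near both nonseparated leaves, and I would then argue that $t_\ell$ actually meets leaves adjacent to both $y_i$ and $x_{i+1}$. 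The geodesic constraint then again forces the excursion of $\gamma$ into $C$ to have length at most $2$, with the endpoints of the excursion adjacent to points of $\llbracket x,y\rrbracket$.

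I expect the case analysis around nonseparated leaves to be the step requiring the most care. In particular, one must verify that crossing between nonseparated leaves cannot be done cheaply while wandering far from $\llbracket x,y\rrbracket$.
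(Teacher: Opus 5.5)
\textbf{Verdict: there is a genuine gap in the non-Hausdorff case.}

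\textbf{What works.} Your Hausdorff-attachment computation is correct. Suppose the entry and exit $\cF^-$-segments of an excursion of $\gamma$ into a component $C$ of $\Lambda^+\setminus\llbracket x,y\rrbracket$ both meet one leaf $p\in\llbracket x,y\rrbracket$. Then $d_{X^+}(v_j,v_{m+1})\le 2$, and geodesicity kills the excursion. This is the same shortcut mechanism the paper uses.

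\textbf{What is missing.} The non-Hausdorff situation is the one that matters here, and you leave it as a heuristic. The attachment structure of $C$ is only conjectured (``I expect''). The two claims in your last paragraph are asserted, not proved: that some $t_\ell$ must cross leaves accumulating on both $y_i$ and $x_{i+1}$, and that the excursion has length at most $2$. Two configurations show why this matters.
\begin{enumerate}
\item An excursion may enter $C$ across $y_i$ and leave across $x_{i+1}$, where $y_i,x_{i+1}$ are the nonseparated ends of consecutive blocks. Since no $\cF^-$-leaf meets two blocks, no leaf of $\llbracket x,y\rrbracket$ is met by both the entry and exit segments, so your bound $m+1-j\le 2$ has nothing to rest on.
\item $C$ may contain a leaf $w\notin\llbracket x,y\rrbracket$ that is nonseparated from a point $p$ in the interior of a block. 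Then the projected path can pass between $\llbracket x,y\rrbracket$ and $C$ through $w$ without ever passing through $p$. So ``enters through $p$ and exits through $p$'' is false.
\end{enumerate}
To close the gap you would need two things. First, in every such configuration, the entry and exit $\cF^-$-leaves must meet a common leaf at distance at most $1$ from $\llbracket x,y\rrbracket$. For the pair $y_i,x_{i+1}$ this is a leaf accumulating on both and close enough to them; for $w$ it is a leaf of the block accumulating on $w$. Second, you must show these configurations exhaust the ways $C$ can be attached.

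\textbf{How the paper avoids this.} It thickens the pseudo-interval before shortcutting.
\begin{itemize}
\item It sets $U=\cF^-(\llbracket x,y\rrbracket)$ and $N_1=\cF^+(U)$. This is exactly the set of $\cF^+$-leaves within $X^+$-distance $1$ of $\llbracket x,y\rrbracket$.
\item $N_1$ is connected, because the gap at each nonseparated pair is bridged by leaves meeting $\cF^-$-leaves through both sides.
\item By simple connectivity of $\Lambda^+$, each component of $P\setminus N_1$ is cut off from $U$ by a single boundary leaf $l$.
\item Let $v$ be the first vertex at distance at least $3$, and $v'$ its predecessor, at distance $2$. Then $v'$ lies outside $N_1$, in the same component as $v$.
\item The realization of $\gamma$ must cross $l$ on the way to $v'$, and again on the way back to $y$, at edges $e_i,e_j$ with $j\ge i+2$.
\item Replacing $v_{i+1},\ldots,v_j$ by $l$ gives a strictly shorter path, contradicting geodesicity.
\end{itemize}
Working with $N_1$ rather than $\llbracket x,y\rrbracket$ makes the single-gate property hold uniformly. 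That removes the case analysis your approach would still require.
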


\begin{proof}
Let $U$ denote the $\cF^-$-saturation of $\llbracket x, y\rrbracket$ and $N_1$ the $\cF^+$-saturation of $U$. Note that, while $U$ may not be connected, $N_1$ is because for any $u_1, u_2$ in the saturation of consecutive intervals in the decomposition of $\llbracket x, y\rrbracket$, there is some $u_1', u_2'$ in these intervals met by a common leaf of $N_1$. 
Also, $N_1$ consists of all the leaves of $\cF^+$ that are at distance (in $X^+$) at most $1$ from $\llbracket x, y\rrbracket$. 

Suppose for contradiction that $\gamma$ contains a vertex at distance $\geq 3$ from $\llbracket x, y\rrbracket$.  Let $v$ be the first such vertex along the geodesic, and let $v'$ be its predecessor so that $v'$ is distance $2$ from $\llbracket x, y\rrbracket$.  Since 
$v'$ is not in $N_1$, its image in $P$ lies in some connected component of $P\setminus N_1$ so there is a unique (by the fact that $\Lambda^+$ is simply connected) leaf $l \in \cF^+$ on the boundary of $N_1$ in $P$ separating $v$ from $U$, and this also separates $v'$ from $U$.  %
Since  
 $\pi_{\Lambda^+}(\gamma)$ is a path from $x$ to $y$, by Theorem \ref{thm_pseudo_interval_decomp} it contains $\llbracket x, y\rrbracket$.  It also contains $v$ and $v'$.     
Thus, any realization of $\gamma$ in $P$ contains at least two distinct segments of $\cF^-$, corresponding to non-consecutive distinct edges $e_i, e_j$  in the path, which cross $l$: the edge $e_i$ terminates at $v'$, which then has another edge to $v$, and later the path must return to a vertex in $\llbracket x, y\rrbracket$, with an edge $e_j$ represented by a leaf that crosses $l$ again.  
We can therefore create a strictly shorter path by replacing all vertices between $e_i$ and $e_j$ with $l$, showing that $\gamma$ was not in fact a geodesic.  
\end{proof}

The other ingredient we will use is the following criterion of Manning \cite[Theorem 4.6]{Man05}:
\begin{thm}(Manning Bottleneck Criterion) \label{lem: quasi tree iff}
    A graph is a quasi-tree if and only if there exists $K \ge 0$ such that, for every geodesic path of even length between two vertices $x,y$, with middle vertex $v$, any path $p$ from $x$ to $y$ contains a point at distance at most $K$ from $v$.
\end{thm}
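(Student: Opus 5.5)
This is Manning's criterion \cite[Theorem 4.6]{Man05}, which the paper quotes rather than proves. If I had to reprove it, I would treat the two implications separately: the forward one is soft, and the converse needs an explicit tree.

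\emph{Quasi-tree $\Rightarrow$ bottleneck.} Let $f\colon \Gamma\to T$ be a $(\lambda,c)$-quasi-isometry to a simplicial tree, with quasi-inverse $\bar f$. Take a geodesic $[x,y]$ with midpoint $v$.
\begin{enumerate}
\item Its image $f([x,y])$ is a quasi-geodesic in $T$. By stability of quasi-geodesics in trees, which is elementary here, it lies within Hausdorff distance $D=D(\lambda,c)$ of the tree geodesic $[f(x),f(y)]$.
\item So there is a point $q\in[f(x),f(y)]$ with $d(q,f(v))\le D$.
\item Now let $p$ be any path from $x$ to $y$. Consecutive vertices of $p$ map to points at distance at most $\lambda+c$ apart. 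Removing $q$ disconnects $f(x)$ from $f(y)$ in $T$, so some vertex $w\in p$ has $d(f(w),q)\le \lambda+c$.
\item Then $d(f(w),f(v))\le D+\lambda+c$. Applying $\bar f$ gives $d(w,v)\le K$ for a constant $K$ depending only on $\lambda$ and $c$.
\end{enumerate}

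\emph{Bottleneck $\Rightarrow$ quasi-tree.} I would build a tree from annuli around a basepoint $x_0$.
\begin{enumerate}
\item Fix a scale $R\gg K$, say $R=10K+10$, and the annuli $A_n=\{z : nR\le d(x_0,z)<(n+1)R\}$.
\item Declare $z\sim z'$ in $A_n$ if they are joined by a path avoiding the ball $B(x_0,nR-R)$.
\item Let $T$ have one vertex for each class, with an edge between a class in $A_{n+1}$ and a class in $A_n$ whenever they contain adjacent vertices. Let $\Phi\colon\Gamma\to T$ send a vertex to its class.
\item \emph{$T$ is a tree.} It suffices that each class at level $n+1$ is adjacent to a unique class at level $n$. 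Two level-$n$ classes touching the same level-$(n+1)$ class are joined through that class by a path staying at distance at least $nR$ from $x_0$. Hence they are equivalent, and $T$ has no cycles.
\item \emph{$\Phi$ is coarsely Lipschitz and coarsely surjective.} Adjacent vertices land in the same or adjacent levels, hence at distance $\le 1$ in $T$ once $R$ is large.
\item \emph{Lower bound.} Graph distance is at most the $T$-distance times a constant plus the diameter of a class. So the remaining task is that every class has diameter bounded in terms of $K$ and $R$.
\end{enumerate}

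\emph{Main obstacle: bounding the diameter of a class.} Let $z,z'\in A_n$ be joined by a path $p$ avoiding $B(x_0,nR-R)$, and suppose $d(z,z')=2L$ is large.
\begin{enumerate}
\item Take a geodesic $[z,z']$ with midpoint $m$. By the criterion, $p$ passes within $K$ of $m$, so $d(x_0,m)\ge nR-R-K$.
\item Also consider the path $[z,x_0]\cup[x_0,z']$ made of two geodesics. By the criterion it too passes within $K$ of $m$.
\item Say the near point $u$ lies on $[x_0,z]$. Then $d(z,m)\le d(z,u)+K$. But $d(z,u)=d(z,x_0)-d(u,x_0)$ is at most roughly $(n+1)R-(nR-R-2K)=2R+2K$.
\item Since $d(z,m)=L$, this gives $L\le 2R+3K$, so the class has diameter bounded in terms of $K$ and $R$.
\end{enumerate}
I expect getting these additive constants consistent to be the fiddly part. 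One also has to check carefully that a path joining two classes really avoids the smaller ball, so that step 4 of the construction is legitimate. Odd-length geodesics can be handled by subdividing edges.
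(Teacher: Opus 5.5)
The paper gives no proof of this statement: it is quoted from Manning \cite[Theorem 4.6]{Man05}, and only the ``if'' direction is ever used (in Proposition~\ref{prop:quasi-tree} and its analogue for $\Gamma^\pm$). Your self-contained argument is correct, and it differs from the citation in a useful way.

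\textbf{Comparison with Manning's formulation.} Manning works in geodesic metric spaces, and his bottleneck property only asks, for each pair $x,y$, for \emph{some} midpoint through whose neighbourhood every path passes. Citing him therefore gives the paper's ``every even-length geodesic'' form of the forward direction only after one notes that, in a quasi-tree, midpoints of different geodesics from $x$ to $y$ are uniformly close. Your forward argument proves that form directly: stability of the image quasi-geodesic in the tree, followed by the cut point $q$.

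Conversely, your class-diameter estimate uses a single geodesic $[z,z']$, so it also works under Manning's weaker hypothesis. The constants are explicit. The two applications of the criterion give $d(x_0,m)\ge (n-1)R-K$ and then $d(z,z')<4R+6K+1$. This holds for every $R\ge 2$, so the choice $R\gg K$ is never actually needed; $R\ge 2$ is only used so that adjacent vertices lie in equal or consecutive annuli.

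\textbf{Connectedness of $T$.} For $T$ to be a tree you need not only uniqueness but also existence of a level-$n$ neighbour for each level-$(n+1)$ class, and that $A_0$ is a single class. Existence follows by following a geodesic from a vertex of the class toward $x_0$. The part before it enters $A_n$ stays at distance $\ge (n+1)R$ from $x_0$, so it avoids $B(x_0,nR)$ and remains in the same class.

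\textbf{Odd distances.} Subdividing edges does not by itself handle odd $d(z,z')$. After subdivision, the midpoint is the middle of an edge of the original graph, and the hypothesis, assumed only for even-length geodesics of the original graph, says nothing there. Instead, observe that the even case implies the odd case with the same $K$. Take an odd geodesic from $x$ to $y$ with penultimate vertex $y'$. Append the edge $yy'$ to a given path from $x$ to $y$, and apply the hypothesis to the even geodesic from $x$ to $y'$. If the point found lies on the appended edge, then the half-length $d(x,v)$ is at most $K$, so $x$ itself is within $K$ of the middle vertex $v$. Your diameter computation then goes through verbatim with $m$ a middle vertex.
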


Combining this with Lemma \ref{lem:geodesics_vs_pseudo_interval}, we can now quickly prove that $X^+$ is a quasi-tree. 
\begin{proof}[Proof of Proposition \ref{prop:quasi-tree}]
Let $\gamma$ be an even length geodesic between two points $x$ and $y$ in $X^+$ and $v$ its middle vertex.  Let $c$ be an arbitrary path between $x$ and $y$.  
Then $\pi_{\Lambda^+}(c)$ is a path from $x$ to $y$ in $\Lambda(\cF^+)$ so contains $\llbracket x, y\rrbracket$, as does $\pi_{\Lambda^+}(\gamma)$. By Lemma \ref{lem:geodesics_vs_pseudo_interval}, $v$ is distance at most $2$ in $X^+$ from $\llbracket x, y\rrbracket$.  Since $\pi_{\Lambda^+}(c)$ is by construction contained in the 1-neighborhood of $c$ in $X^+$, we conclude that $v$ is distance at most 3 from $c$.  
This shows that Manning's Bottleneck Criterion is satisfied with $k=3$, so $X^+$ is a quasi-tree.
The fact that foliation preserving homeomorphisms of $(P, \cF^+, \cF^-)$ act on $X^+$ by automorphisms (hence isometries) is direct from the definition.
\end{proof} 

\subsection{Quasi-isometry of $X$, $X^+$ and $X^-$}
There are natural ``inclusion'' maps $i_\pm \colon X^\pm \to X$, which sends a vertex in $X^\pm$, which is a leaf of $\cF^\pm$, to the corresponding leaf in $X$. 
To complete the proof of Theorem \ref{thm:quasi-tree}, it remains only to show:

\begin{prop}\label{prop: X+ X- QI}
    The inclusion maps $i_+ \colon X^+ \to X$ and $i_- \colon X^- \to X$ are quasi-isometries. In particular, $X^+$ and $X^-$ are quasi-isometric.
\end{prop}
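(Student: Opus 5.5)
By symmetry it suffices to treat $i_+$. We need three things: an upper bound $d_X(i_+(l), i_+(l')) \le A\, d_{X^+}(l,l') + B$, a lower bound of the same form, and coarse surjectivity of $i_+$. The statement for $X^-$ then follows by exchanging the roles of $+$ and $-$. Composing $i_+$ with a quasi-inverse of $i_-$ gives the quasi-isometry $X^+ \to X^-$.

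\emph{Upper bound and coarse surjectivity.} Let $l, l'$ be adjacent in $X^+$. Then some nonsingular leaf $m \in \cF^-$ meets both, so $l, m, l'$ is a path of length $2$ in $X$. Hence $d_X(i_+(l), i_+(l')) \le 2\, d_{X^+}(l,l')$, and $i_+$ is $2$-Lipschitz. For coarse surjectivity, any vertex of $X$ that is a leaf $m \in \cF^-$ meets some leaf of $\cF^+$, since the foliations are transverse and every point lies on a leaf of $\cF^+$. So $m$ lies within distance $1$ of the image of $i_+$.

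\emph{Lower bound.} Let $l = w_0, w_1, \ldots, w_n = l'$ be a geodesic in $X$ between leaves of $\cF^+$. The graph $X$ is bipartite, since leaves of the same foliation do not intersect (distinct leaves of one foliation are disjoint, and I use here that the singular structure does not create self-crossings). So the $w_i$ alternate between $\cF^+$ and $\cF^-$, and $n$ is even. For each odd index $i$, the leaf $w_i \in \cF^-$ meets both $w_{i-1}$ and $w_{i+1}$. If $w_i$ is nonsingular, then $w_{i-1}$ and $w_{i+1}$ are adjacent in $X^+$ by definition.

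The main obstacle is the case where $w_i$ is a singular leaf, a $k$-prong, whose two intersection points with $w_{i-1}$ and $w_{i+1}$ lie on different faces. Then $w_{i-1}$ and $w_{i+1}$ need not meet a common face. I would handle this by passing through the singular point $p$ of $w_i$. Let $c$ be the leaf of $\cF^+$ through $p$, which is also a prong. The face of $w_i$ containing $w_i \cap w_{i-1}$ contains $p$, so $w_{i-1}$ and $c$ both meet a common face of $w_i$. By the face reformulation of the edge relation stated after Definition~\ref{def: graph X^+}, this makes them adjacent in $X^+$. Likewise $c$ is adjacent to $w_{i+1}$. Thus $d_{X^+}(w_{i-1}, w_{i+1}) \le 2$ in all cases.

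Summing over the $n/2$ odd indices gives $d_{X^+}(l,l') \le 2 \cdot (n/2) = n = d_X(i_+(l), i_+(l'))$. Combined with the Lipschitz bound, this shows $i_+$ is a quasi-isometric embedding with constants $(2,0)$, and hence, with coarse surjectivity, a quasi-isometry.

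The only genuinely delicate check is the face argument in the singular case. There I would verify from the local model $z\mapsto z^{k/2}$ that the singular point lies in the closure of every face of $w_i$ and on the leaf $c$, and that in the face reformulation the phrase "meets a common face" permits meeting at that singular point. If that reformulation turns out to exclude it, the fallback is to use a nonsingular leaf of $\cF^-$ near $p$ inside the appropriate quadrant, which crosses both $c$ and a nearby leaf of $\cF^+$ that still meets $w_{i-1}$. This costs at most a bounded additive constant per step, which still yields a quasi-isometry.
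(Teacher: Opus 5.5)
Your proof is correct and takes essentially the same route as the paper. You get the $2$-Lipschitz bound from the definition of edges, and coarse surjectivity from the $1$-neighbourhood. For the reverse inequality you walk along an $X$-geodesic and detour through the prong leaf $\cF^+(p)$ whenever the intermediate $\cF^-$-vertex is singular with no common face, which yields $d_{X^+}\le d_X\le 2\,d_{X^+}$. Your face-based justification of that detour is exactly the fact the paper invokes: a nonsingular $\cF^-$-leaf pushed off the relevant face crosses both $w_{i-1}$ and the ray of $\cF^+(p)$ in that sector. So the fallback you mention is not needed.
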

Note that as a direct consequence of this proposition and Proposition \ref{prop:quasi-tree}, we deduce that $X$ is also a quasi-tree.

\begin{proof}
   To fix notations, we only do the case of $i^+$, the other one being identical. The map $i^+$ is coarsely surjective as any vertex in $X$ is at distance at most $1$ from a vertex corresponding to a leaf of $\cF^+$.
   If the foliations do not have singularities, then, by construction, two vertices $v,w \in X^+$ are at distance $1$ if and only if the corresponding vertices $i_+(v), i_+(w)$ are at distance $2$ in $X$. In the presence of singularities, we still have that if $d_{X^+}(v,w)=1$ then $d_X(i_+(v), i_+(w))=2$, but for the other direction we may have one additional case. If there exists a prong $p$ such that $\cF^-(p)$ intersects both $v$ and $w$, but no face of $\cF^-(p)$ intersects both, then the distance in $X$ is $2$ while the distance in $X^+$ is not $1$. In this case, the distance in $X^+$ is also $2$ by the following argument: Since $v\cap \cF^-(p)\neq \emptyset$, there is a nonsingular leaf intersecting both $v$ and $\cF^+(p)$, and similarly for $\cF^+(p)$ and $w$.
   In any case, we have that $d_X(i_+(v), i_+(w)) \leq 2 d_{X^+}(v,w)$, which gives the desired bound. 
   
   Finally, we will show that $d_{X^+}(v,w) \leq d_X(i_+(v), i_+(w))$. Consider $\gamma$ a geodesic in $X$ between $i_+(v)$ and $ i_+(w)$. Call $v_0,\dots, v_{k}$ the vertices in $\gamma$ that correspond to leaves of $\cF^+$, with $i_+(v) = v_0$ and $i_+(w) = v_{k}$. Let $e_1, \dots, e_{k}$ the vertices in $\gamma$, corresponding to leaves of $\cF^-$, such that $e_i$ is the unique vertex of $\gamma$ between $v_{i-1}$ and $v_i$. By definition, we have that $d_X(i_+(v), i_+(w)) = 2k$.
   If all the $e_i$ are non-singular leaves, then we can get a path $c$ in $X^+$, as a ``projection'' of $\gamma$, whose vertices are $v_0,\dots, v_{k}$ and edges correspond to $e_1, \dots, e_{k}$. If some $e_i$ is a singular leaf, then, as explained above we may have two distinct cases: either $v_i$ and $v_{i+1}$ are intersected by a common non-singular leaf, and so we can still think of $v_i, e_i, v_{i+1}$ as a path in $X^+$, or there exists a singular leaf $w_i$ and two edges $f_i, f_i'$ such that $v_i, f_i, w_i, f_i', v_{i+1}$ is a geodesic in $X^+$.
So in any case we get a path $c$ in $X^+$ between $v$ and $w$. If we call $n$ the number of indices $i$ where the singular leaf $e_i$ has to be replaced by $f_i, w_i, f_i'$, then the length of $c$ is $k + n \leq 2k$.

Therefore, $d_{X^+}(v,w) \leq = k + n \leq 2k = d_X(i_+(v), i_+(w))$ as claimed.\qedhere

\begin{figure}[h]
    \centering
    \labellist
    \small
    \pinlabel{\color{red} $w_i$} [l] at 220 125
    \pinlabel{\color{blue} $f_i$} [bl] at 183 202
    \pinlabel{\color{blue} $f_i'$} [l] at 201 180
    \pinlabel{\color{blue} $e_i$} [tr] at 56 57
    \pinlabel{\color{red} $v_i$} [r] at 39 154
    \pinlabel{\color{red} $v_{i+1}$} [l] at 210 93
    \endlabellist
    
    \includegraphics[width=0.4\linewidth]{figures/distance_prong}
    \caption{Case where both the distance in $X$ and $X^+$ between $v_i$ and $v_{i+1}$ is 2.}
    \label{fig: prong distance}
\end{figure}

\end{proof}

\section{Nonelementary graphs and actions} \label{sec_nonelementary}
In this section we show that, in the presence of a group acting with Anosov-like dynamics, the graph $X^+$ is typically nonelemetary and the action is nonelementary as well.  We further describe the isometries (which are necessarily elliptic or loxodromic) of $X^+$ induced by such an action. 
To do this, we begin by recalling some essential background on Anosov-like actions, more details can be found in \cite{barthelmePseudoAnosovFlowsPlane2026}.  A reader familiar with this basic theory can skip directly to Section \ref{subsec_pseudo-axes}.

\subsection{Background: Anosov-like actions on bifoliated planes}
The following definition (from \cite{barthelmePseudoAnosovFlowsPlane2026}, originally introduced in a slightly different form in \cite{barthelmeOrbitEquivalencesPseudoAnosov2022}) gives an abstraction of the key properties satisfied by the orbit-space actions of pseudo-Anosov flows on compact 3-manifolds.  
\begin{defi}[Anosov-like action] \label{def_action}
An action of a group $G$ by foliation-preserving homeomorphisms on a bifoliated plane $(P, \cF^+, \cF^-)$ is called \emph{Anosov-like} if it satisfies the following: 
\begin{enumerate}[label = (A\arabic*)]
	\item\label{Axiom_A1} 
	If a nontrivial element of $G$ fixes a leaf $l \in \cF^\pm$, then it has a fixed point $x \in l$ and is topologically expanding on one leaf through $x$ and topologically contracting on the other. 
	\item\label{Axiom_dense} The union of leaves of $\cF^{+}$ that are fixed by some element of $G$ is dense in $P$, as is the union of leaves of $\cF^{-}$ that are fixed by some element of $G$.  
	\item \label{Axiom_prongs_are_fixed} Each singular point is fixed by some nontrivial element of $G$.
	\item \label{Axiom_nonseparated} If $l$ is nonseparated with another leaf in its leaf space, then some nontrivial element $g\in G$ fixes $l$.    
\end{enumerate}
\end{defi}

\begin{thm}[Theorem 1.4.12, \cite{barthelmePseudoAnosovFlowsPlane2026}]
    If $\phi$ is a pseudo-Anosov flow on a closed $3$-manifold, the induced action of $\pi_1(M)$ on its orbit space $(\mathcal O_\phi, \cF^s, \cF^u)$ is an Anosov-like action. 
\end{thm}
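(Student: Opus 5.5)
The plan is to verify the four axioms of Definition \ref{def_action} directly from the structure of the lifted flow $\widetilde\phi$ on $\widetilde M$. I would use the standard facts (Fenley--Mosher, Barbot) that the orbit space $\mathcal O_\phi$ is homeomorphic to $\R^2$, and that the weak stable and unstable foliations of $\widetilde\phi$ descend to transverse foliations $\cF^s,\cF^u$ whose singularities are the images of singular orbits, each a $k$-prong. Deck transformations commute with $\widetilde\phi$, so $\pi_1(M)$ acts on $\mathcal O_\phi$ by foliation-preserving homeomorphisms. Throughout, an element $g$ fixing a point of $\mathcal O_\phi$ means $g$ preserves a lifted orbit $\widetilde\alpha$. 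That orbit then projects to a periodic orbit $\alpha$ of $\phi$, and $g$ acts on $\widetilde\alpha$ as translation by a nonzero multiple of the period.

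\textbf{(A1).} Suppose $g\neq 1$ fixes a leaf $l$, say of $\cF^s$. Then $g$ preserves the corresponding weak stable leaf $\widetilde L$ in $\widetilde M$, so the projected leaf $L\subset M$ has nontrivial fundamental group. I would invoke the standard description of weak stable leaves of a pseudo-Anosov flow: they are planes, annuli or M\"obius bands, and all orbits in a leaf are forward asymptotic. It follows that a leaf with nontrivial $\pi_1$ contains a unique periodic orbit $\alpha$, which generates $\pi_1(L)$ (up to index $2$ in the M\"obius case). Hence $g$ preserves a lift $\widetilde\alpha\subset\widetilde L$, and the point $x=\widetilde\alpha\in l$ is fixed by $g$. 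To see the dynamics, replace $g$ by $h=\widetilde\phi_{-T}\circ g$, where $T$ is the translation length of $g$ along $\widetilde\alpha$. Then $h$ induces the same map as $g$ on $\mathcal O_\phi$ and fixes $\widetilde\alpha$ pointwise. On the strong stable leaf through a point of $\widetilde\alpha$, $h$ is the first-return holonomy: it contracts if $T>0$ and expands if $T<0$. On the strong unstable leaf it does the opposite. Identifying these strong leaves with the leaves $\cF^s(x)$ and $\cF^u(x)$ of $\mathcal O_\phi$ gives one contracting and one expanding leaf through $x$. At a prong the same argument applies face by face.

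\textbf{(A3) and (A4).} For (A3), singular orbits of a pseudo-Anosov flow on a closed manifold are periodic by definition, and there are finitely many of them. Any lift of a singular orbit is therefore fixed by the deck transformation representing that periodic orbit. For (A4), I would cite Fenley's theorem on branching: if two leaves of $\cF^s$ (or $\cF^u$) are nonseparated, each of them is the lift of a periodic leaf. The idea is compactness of $M$. Nonseparated leaves are joined by a chain of lozenges; a sequence of leaves converging to the nonseparated pair projects to a recurring configuration in $M$. By finiteness of branching up to deck transformations, some deck transformation must preserve the nonseparated leaf. That deck transformation is nontrivial, and the leaf it preserves is the required fixed leaf.

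\textbf{(A2), the main obstacle.} A leaf of $\cF^s$ is fixed by a nontrivial element exactly when it is the lift of the weak stable leaf of a periodic orbit. So (A2) amounts to showing that the weak stable leaves of periodic orbits are dense in $M$, and similarly for unstable leaves. Since non-transitive examples exist, I would not assume transitivity. Instead I would argue via the closing lemma for pseudo-Anosov flows:
\begin{enumerate}
\item Given $p\in M$ and $\varepsilon>0$, pick an $\omega$-limit point $q$ of the orbit of $p$.
\item Times $t_1<t_2$ with $\phi_{t_i}(p)$ close to $q$ give an almost-closed orbit segment, which the closing lemma shadows by a periodic orbit $\delta$.
\item For large $t$, $\phi_t(p)$ is within small distance of $\delta$. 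Hence the local unstable leaf through $\phi_t(p)$ meets the local weak stable leaf of $\delta$ at a nearby point.
\item Flowing backward by time $t$ contracts unstable distances. So the weak stable leaf of $\delta$ passes within $\varepsilon$ of $p$.
\end{enumerate}
The unstable case is symmetric, using $\alpha$-limits. The delicate points are applying the shadowing and closing results near singular orbits, where the local product structure is only prong-like, and making the ``for large $t$'' step uniform. I would handle these using the local models at the finitely many singular orbits.
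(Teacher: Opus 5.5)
Your axiom-by-axiom verification is correct and follows the standard route behind this statement, which the paper does not reprove but cites from Barthelm\'e--Mann: (A1) from the structure of weak leaves with nontrivial stabilizer, (A2) from the closing lemma combined with local product structure and backward contraction along unstable leaves, (A3) from the definition, and (A4) by citing Fenley's theorem that nonseparated leaves are periodic. A few small repairs are needed. Since $g(p)=\widetilde\phi_T(p)$, the map $\widetilde\phi_{-T}\circ g$ is the inverse of the forward return map, so it expands rather than contracts the stable direction when $T>0$ (harmless for (A1)). In (A2) you only need the single time $t_1$ at which $\delta$ shadows $\phi_{t_1}(p)$, not all large $t$, so no uniformity is required. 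Your heuristic for (A4) is circular as written, since chains of lozenges through nonseparated leaves are obtained from periodicity, and finiteness of branching alone does not produce a stabilizer; the citation of Fenley has to carry that step.
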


Admitting an Anosov-like action constrains the topology of the foliations $\cF^+, \cF^-$.  As one important example we have: 

 \begin{thm}[Trichotomy for Anosov-like actions] \label{thm:trichotomy}
 Let $(P,\cF^+, \cF^-)$ be a bifoliated plane with an Anosov-like action of a group $G$. Then exactly one of the following holds:
  \begin{enumerate}[label=(\roman*)]
   \item  $(P,\cF^+, \cF^-)$ is trivial,
   \item  $(P,\cF^+, \cF^-)$ is skew (as defined in Example \ref{ex:trivial_skew}), or
   \item There is either a singular point in $P$, or the leaf spaces of $\cF^+$ and $\cF^-$ are both non-Hausdorff with two-sided branching. \footnote{Two-sided branching is a definition which applies to transversely orientable foliations. This is always the case when $\cF^\pm$ have no prongs.} %
  \end{enumerate}
 \end{thm}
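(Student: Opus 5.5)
The plan is to split according to the local and global structure of the foliations, using the Anosov-like axioms to force a rigid picture in the nonsingular, Hausdorff case. Exclusivity is easy: in the trivial plane every leaf of $\cF^+$ meets every leaf of $\cF^-$; in the skew plane this fails; and both have no singularities and leaf spaces homeomorphic to $\R$. So it suffices to show that one of (i)--(iii) must hold. If $P$ has a singular point we are in (iii). Hence we assume from now on that $\cF^\pm$ are nonsingular. Then $\Lambda^\pm$ are simply connected non-Hausdorff $1$-manifolds, and, after passing to an index-at-most-$4$ subgroup (harmless for the conclusion), the foliations are transversely oriented.

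\textbf{Step 1: Hausdorff case.} Assume $\Lambda^+\cong\Lambda^-\cong\R$. For each $l\in\cF^+$, the set $I(l)\subset\Lambda^-$ of leaves meeting $l$ is an open interval $(a(l),b(l))$ with endpoints in $\R\cup\{\pm\infty\}$. The maps $a,b$ are monotone in $l$.

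\emph{Step 1a (uniformity).} First I show that the set of $l$ with $b(l)=+\infty$ is either empty or all of $\Lambda^+$, and similarly for $a$. I expect to show this set is open, closed and $G$-invariant. The key mechanism is: take a leaf fixed by $g$ via \ref{Axiom_dense}, let $g$ act with the contraction/expansion given by \ref{Axiom_A1} on the fixed point, and push a leaf with $b=+\infty$ close to any given leaf. Transversality should then transfer the unboundedness.

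\emph{Step 1b (conclusion).} If $a\equiv-\infty$ and $b\equiv+\infty$, every leaf meets every leaf and $P$ is trivial. If, say, $b$ is finite, then the leaf $b(l)$ of $\cF^-$ and the leaf $l$ make a ``perfect fit''. Using \ref{Axiom_A1} again, one sees that a fixed leaf can have no half-infinite ray of intersection on one side only. This forces both $a$ and $b$ to be finite, strictly increasing homeomorphisms. The map $l\mapsto(a(l),b(l))$ then gives a bifoliated homeomorphism onto a diagonal strip, so $P$ is skew.

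\textbf{Step 2: non-Hausdorff case.} Suppose $\Lambda^+$ is non-Hausdorff, with nonseparated leaves $l_1,l_2$ that are limits of leaves $l_n$.

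\emph{Step 2a ($\Lambda^-$ is also non-Hausdorff).} By \ref{Axiom_nonseparated}, some $g$ fixes $l_1$; a power of $g$ also fixes $l_2$, since $g$ permutes the (countably, discretely arranged) leaves nonseparated from $l_1$ and preserves the side. By \ref{Axiom_A1}, $g$ has fixed points $x_i\in l_i$. Consider the $\cF^-$-leaves through $x_1,x_2$ and the $\cF^-$ leaf $r$ bounding the region between $l_1$ and $l_2$ that makes perfect fits with them. Iterating $g$ or $g^{-1}$ on $\cF^-$-leaves through points of $l_n$ should produce leaves of $\cF^-$ accumulating on two distinct $\cF^-$ leaves, namely $r$ and another leaf. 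This gives nonseparation in $\Lambda^-$.

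\emph{Step 2b (two-sided branching).} Suppose all branching of $\cF^+$ is in the positive direction. I would derive a contradiction by the same dynamical construction applied to $\Lambda^-$: the nonseparated pair produced in Step 2a lies on the negative side relative to the transverse orientation of $\cF^+$. Running the argument back from $\Lambda^-$ to $\Lambda^+$ then yields branching of $\cF^+$ in the negative direction. \ref{Axiom_dense} is used to make sure the relevant fixed leaves exist near the branching.

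\textbf{Main obstacle.} I expect Step 2b to be the hardest: tracking orientations carefully through the perfect fits, so that branching on one side genuinely produces branching on the other. The uniformity argument in Step 1a is the second delicate point.
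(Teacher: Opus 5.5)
\paragraph{Verdict.} There are genuine gaps. The paper gives no proof of its own here. It quotes three results, with Anosov-like versions in \cite{barthelmePseudoAnosovFlowsPlane2026}: Barbot (if one leaf space is $\R$, so is the other), Fenley (if both are $\R$, the plane is trivial or skew), and Fenley's theorem ruling out one-sided branching. Your Steps 2a, 1 and 2b are exactly these three results, so the architecture is right. But in each step the argument is replaced by an expectation, and in Step 2a the mechanism you describe cannot work.

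\paragraph{Step 2a.} Everything you use there occurs in the strip between $y=\sin x$ and $y=\sin x+\tfrac12$ from Section~\ref{sec: metrics on plane}, taking $\cF^+$ horizontal and $\cF^-$ vertical. Concretely:
\begin{itemize}
\item the nonseparated $l_1,l_2\in\cF^+$ are the two halves of $\{y=1\}$ near $x=\pi/2$;
\item the leaf $r\in\cF^-$ making perfect fits with both is $r=\{x=\pi/2\}$;
\item the two lozenges with a common corner on $r$ are present;
\item an element $g$ fixing $l_1,l_2,r$ with saddle-type fixed points exists, even of product form $(x,y)\mapsto(\phi(x),\psi(y))$.
\end{itemize}
In this strip $\Lambda^-\cong\R$, so $g$-iterates of $\cF^-$-leaves converge to single leaves, and no nonseparated $\cF^-$-pair is ever produced.

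What makes this model incompatible with the axioms is global. Near the other end of $r$, the $\cF^+$-leaves crossing $r$ are short arcs with both endpoints close to one ideal point. A suitable fixed such arc, supplied by \ref{Axiom_dense}, yields an element with two fixed points on one leaf, contradicting \ref{Axiom_A1}. Some such use of \ref{Axiom_dense} away from the nonseparated pair is the missing idea; as written, Step 2a just restates Barbot's theorem.

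\paragraph{Step 1.} Monotonicity of $a,b$, and the claim that $\{l : b(l)=+\infty\}$ is open and closed, do not follow from transversality. Take the plus-shaped region $(\R\times(-1,1))\cup((-1,1)\times\R)$ with the coordinate foliations. Both leaf spaces are $\R$, yet $a$ is not monotone. Moreover, the leaves $x=1-\varepsilon$ (with $b=+\infty$) converge to $x=1$ (with $b$ finite). So unboundedness does not pass to limits, however you push leaves around with $g$. Excluding this requires a genuine dynamical lemma, proved from \ref{Axiom_A1} and \ref{Axiom_dense}. An example is the fact that Anosov-like actions admit no infinite product regions; here $\{|x|\le\tfrac12,\ y\ge 0\}$ is one.

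\paragraph{Step 2b.} This is Fenley's one-sided branching theorem, the deepest of the three results. ``Running the argument back'' has no content until you show two things: that branching of $\cF^+$ in one direction forces branching of $\cF^-$ in a definite direction, and that returning to $\cF^+$ then produces branching in the \emph{opposite} direction rather than the same one. Your outline does not say what forces that flip.
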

This was proved for flows by a combination of results of Barbot \cite[Théorème 4.1]{barbotCharacterizationAnosovFlows1995} and Fenley \cite[Theorem 3.4]{fenleyAnosovFlows3manifolds1994} and \cite[Main Theorem]{fenleySidedBranchingAnosov1995}, see \cite{barthelmePseudoAnosovFlowsPlane2026}
for a proof for Anosov-like actions.  

We recall some important vocabulary describing features of bifoliated planes.  Given any set $A\subset P$, the \emph{saturation} of $A$ by $\cF^+$-leaves is the union of all $\cF^+$-leaves intersecting $A$, and we denote it by $\cF^+(A)$. Similarly, $\cF^-(A)$ denote the saturation of $A$ by leaves of $\cF^-$. Note that we often consider $\cF^+(A)$ both as a subset of $P$ and as a subset of the leaf space $\Lambda^+$.

\begin{defi}[Perfect fits and lozenges]
Two leaves $l^+, l^-$ (or rays of leaves $r^+$, $r^-$) in $\cF^+, \cF^-$, respectively, form a {\em perfect fit} if they do not intersect, but $l^\pm$ lies in the closure of the saturation $\cF^\pm(l^\mp)$. 
A {\em lozenge} $L$ is an open region, homeomorphic via a foliation-preserving homeomorphism, to the product-foliated rectangle $(0,1)^2$, and bounded by leaves $l_1^\pm, l_2^\pm$ where the pairs $l_i^\pm$ make a perfect fit, and $l_i^+ \cap l_j^- \neq \emptyset$ for $i \neq j$.   These intersection points are called the {\em corners} of the lozenge, and the rays of leaves $l_i^\pm$ in the boundary of $L$ are called the {\em sides}.  The closure of $L$ in $P$ is called a {\em closed lozenge}.  
\end{defi}

\begin{defi}[Chains]
A {\em chain of lozenges} $\cC$ is a union of closed lozenges
that satisfies the following connectedness property: for any two lozenges $L, L'$ in $\cC$,
there exist lozenges $L_0,L_1, \ldots L_k$ in $\cC$ such that $L= L_0$, $L'= L_k$ and for all $i$ the
pair of lozenges $L_i$ and $L_{i+1}$ share a corner (possibly a side).
\end{defi}
We say that a chain of lozenges is \emph{maximal} if it is maximal for the inclusion.

\begin{figure}[h]
    \centering
    \includegraphics[width=1\linewidth]{figures/lozenge.pdf}
    \caption{A perfect fit, a lozenge, and a chain of 3 lozenges.}
    \label{fig: lozenge}
\end{figure}

\begin{defi}[(non)-corners]
A point $p \in P$ is called a {\em corner} if it is a corner of some lozenge, and a {\em non-corner} otherwise.  
 A {\em non-corner fixed point} is a point $p$ which is a noncorner, and fixed by some nontrivial $g \in G$.
\end{defi}
As an example, in the skew plane, each point $p$ is the corner of exactly two lozenges, which lie in diagonal quadrants of $p$.

We will not formally define {\em Smale classes} or their partial ordering here (see \cite[Definition 3.4]{barthelmeNONTRANSITIVEPSEUDOANOSOVFLOWS}), these are generalizations of the invariant {\em basic sets} for an Anosov flow, with the Smale order.  
Transitive actions have a unique Smale class.  We assume here that the actions we consider have an {\em extremal} Smale classs.  This corresponds in the flows setting to either being transitive, or having at least one attractor or one repeller in the nontransitive case.  Thus this property always holds for the orbit space actions induced from pseudo-Anosov flows on closed 3-manifolds.   

We use this only to obtain the following consequence:
\begin{prop}[Consequence of extremal Smale class] \label{prop extremal smale}
    If an Anosov-like action of $G$ on $P$ has an extremal Smale class, and $P$ is not skew nor trivial, then there exist non-corner fixed points $a$, $b$ such that $\cF^+(a) \cap \cF^-(b) \neq \emptyset$ and $\cF^-(a) \cap \cF^+(b) = \emptyset$. 
\end{prop}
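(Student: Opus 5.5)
We first produce a single non-corner fixed point $a$, and then find $b$ by density (Axiom \ref{Axiom_dense}) together with the local product structure near $a$. The extremal Smale class assumption is used only to get $a$. Let $\Lambda$ be an extremal Smale class, say minimal, so that it plays the role of an attractor (the maximal case is symmetric after exchanging $\cF^+$ and $\cF^-$). The results of \cite{barthelmeNONTRANSITIVEPSEUDOANOSOVFLOWS} should show that fixed points of $\Lambda$ accumulate within $\Lambda$, and that the $\cF^-$-saturation of $\Lambda$ is a union of leaves on which the action restricts to an ``attracting'' dynamics. In the transitive case $\Lambda$ is all of $P$.

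\textbf{Step 1: a non-corner fixed point exists.} Suppose every fixed point in $\Lambda$ were a corner. A fixed point $p$ fixed by $g$ is the corner of some lozenge $L$. Then $g$ (or $g^2$) fixes $L$ and hence the opposite corner. Iterating this produces chains of lozenges. The standard argument (as in Fenley's work on lozenges, adapted to Anosov-like actions in \cite{barthelmePseudoAnosovFlowsPlane2026}) is that if a dense set of fixed points are all corners, then the chains form a $G$-invariant structure forcing the plane to be skew or trivial. Concretely, the sides of lozenges through a dense family of corners must be leaves making perfect fits everywhere, and this gives either a global product structure or the skew strip. This contradicts the assumption that $P$ is neither skew nor trivial. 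We need this in the extremal class, so that the argument can be run inside the saturation of $\Lambda$. I expect this to be the main obstacle, and I would try to cite the existing characterization of skew planes as those in which every point is a corner, together with the fact that corners are closed under the dynamics in the Smale class.

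\textbf{Step 2: choose $b$.} Take a non-corner fixed point $a$, fixed by $g$. Non-corner points are open-ish in the following sense. A point near $a$ lying in a small product rectangle $R$ around $a$ in a quadrant is not a corner of a lozenge containing $a$ as the opposite corner. By density of fixed leaves (Axiom \ref{Axiom_dense}) and the Anosov closing-type statement for Anosov-like actions (a fixed $\cF^+$-leaf and a fixed $\cF^-$-leaf near each other in a rectangle produce fixed points), we find fixed points $b$ in any quadrant of $a$ arbitrarily close to $a$. We want $\cF^+(a)\cap\cF^-(b)\neq\emptyset$ but $\cF^-(a)\cap\cF^+(b)=\emptyset$. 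Inside a product rectangle all four intersections are nonempty. So instead take $b$ in a product rectangle along $\cF^+(a)$ far away, i.e.\ with $\cF^-(b)$ crossing $\cF^+(a)$ far out. Then push by $g^n$, which expands along one leaf: the leaf $\cF^+(b)$ does not meet $\cF^-(a)$, because otherwise the four leaves would bound a rectangle with a fixed point at each corner of $a$ and $b$, for all such choices. If for every such $b$ the leaves $\cF^-(a)$ and $\cF^+(b)$ met, then the $\cF^+$-leaves crossing the ray of $\cF^-(a)$ would saturate everything. Together with the analogous statement on the other side, this would make $\cF^+$ and $\cF^-$ have a global product on $\cF^-(a)\cup\cF^+(a)$ saturations, yielding trivial $P$ (using Theorem \ref{thm:trichotomy}). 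Hence some such $b$ exists, and it is a non-corner fixed point by taking it in the same extremal class and using Step 1's density of non-corners (non-corner fixed points are dense in $\Lambda$, again from \cite{barthelmeNONTRANSITIVEPSEUDOANOSOVFLOWS}).
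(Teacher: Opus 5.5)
Your Step 2 does not establish partial linking, and that is where all the content of the statement lies.

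\textbf{The arguments for $\cF^+(b)\cap\cF^-(a)=\emptyset$ do not work.} Applying $g^n$ cannot change whether $\cF^+(b)$ meets $\cF^-(a)$, because $g$ fixes both $\cF^+(a)$ and $\cF^-(a)$. Thus $\cF^+(g^n b)\cap\cF^-(a)\neq\emptyset$ if and only if $\cF^+(b)\cap\cF^-(a)\neq\emptyset$. A rectangle bounded by $\cF^\pm(a)$ and $\cF^\pm(b)$ is also not a contradiction: it is simply the linked configuration.

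Worse, local arguments near $\cF^+(a)\cup\cF^-(a)$ can only ever produce linked pairs. If $b$ is close enough to a point $y\in\cF^+(a)$ (or $y\in\cF^-(a)$), then $\cF^+(b)$ shadows the compact segment from $y$ to $a$ and crosses $\cF^-(a)$ near $a$. Meanwhile $\cF^-(b)$ crosses $\cF^+(a)$ near $y$.

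So everything rests on your final contrapositive ("otherwise the $\cF^+$-leaves crossing $\cF^-(a)$ saturate everything, hence $P$ is trivial"), which is asserted rather than proved. As sketched, it would need candidate points $b$ throughout $P$. Outside the transitive case, however, fixed points only live in the Smale classes. The ``closing'' statement you invoke is also invalid: a leaf fixed by $g$ and a leaf fixed by $h$ in general meet in a point fixed by neither. Nothing guarantees fixed points in every quadrant of $a$ near $a$.

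\textbf{The missing idea is to locate where partial linking can occur.} Suppose $\cF^+(a)\cap\cF^-(b)\neq\emptyset$ and $\cF^-(a)\cap\cF^+(b)=\emptyset$. Then $\cF^-(b)$ meets $\cF^+(a)\subset S=\cF^+(\cF^-(a))$ but also contains $b\notin S$. So it must get past the leaf of the boundary of $S$ that separates $\cF^+(a)$ from $\cF^+(b)$. That leaf is either a member of a family of nonseparated leaves or a singular leaf. This is exactly the dichotomy in the proof of Proposition \ref{prop: free element condition}; see Figure \ref{fig: broken and prongs}. This is how non-triviality and non-skewness enter through Theorem \ref{thm:trichotomy}, not through a ``global product'' argument.

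One must then produce non-corner fixed points on the two sides of such a prong or nonseparated family, with the right crossings. In the transitive case this is \cite[Lemma 4.11]{barthelmeOrbitEquivalencesMathbb2023}, which uses density of non-corner points. For an extremal class, the paper runs the same argument using two facts together:
\begin{enumerate}
\item non-corner fixed points are dense in $\Lambda$;
\item $\Lambda$ is saturated by $\cF^+$ or by $\cF^-$ \cite[Observation 6.6]{barthelmeNONTRANSITIVEPSEUDOANOSOVFLOWS}.
\end{enumerate}
Your proposal never uses the saturation of $\Lambda$ itself. What you say about the $\cF^-$-saturation of $\Lambda$ carrying ``attracting'' dynamics is a different and vaguer claim. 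Your Step 1 is in any case superseded by the density citation you make at the end.
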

As noted above, this is automatically true for orbit space actions coming from pseudo-Anosov flows, but may fail for some exotic Anosov-like actions (see Example 8.4 and Proposition 8.10 in \cite{barthelmeNONTRANSITIVEPSEUDOANOSOVFLOWS}).

\begin{proof}
The proof for transitive actions is given in \cite[Lemma 4.11]{barthelmeOrbitEquivalencesMathbb2023}, using the fact that non-corner points are dense in $P$.  For an extremal Smale class $\Lambda$, \cite[Proposition 6.8]{barthelmeNONTRANSITIVEPSEUDOANOSOVFLOWS} shows that non-corner fixed points are dense in $\Lambda$, and, by definition of extremal, $\Lambda$ is saturated by either $\cF^+$ or $\cF^-$ \cite[Observation 6.6]{barthelmeNONTRANSITIVEPSEUDOANOSOVFLOWS}. With these properties the proof of \cite[Lemma 4.11]{barthelmeOrbitEquivalencesMathbb2023} carries over unchanged.  
\end{proof}

The relationship between chains of lozenges and dynamics of the action comes from the following theorem, originally proved by Fenley in the context of Anosov flows:
\begin{thm}[See Theorem 2.8.7 of \cite{barthelmePseudoAnosovFlowsPlane2026}] \label{thm_fixed_connected}
Suppose $G$ acts Anosov-like on $P$.  If a nontrivial element $g \in G$ fixes distinct points $x, y \in P$, then $x$ and $y$ are corners of a chain of lozenges.  Conversely, if a nontrivial $g$ fixes some corner $x$ of a chain $\mathcal{C}$ of lozenges, then (up to passing to a power $g^k$ preserving all quadrants of $x$) $g^k$ fixes all corners of $\mathcal{C}$.  
\end{thm}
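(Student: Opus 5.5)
The plan follows Fenley's original argument for Anosov flows, adapted to axioms \ref{Axiom_A1}--\ref{Axiom_nonseparated}. First I would pass to a power $g^k$ that fixes every face and every quadrant at $x$ and at $y$. Then I would record two observations.

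\begin{itemize}
\item $x$ and $y$ cannot lie on a common leaf. By \ref{Axiom_A1}, $g$ acts on such a leaf as a topological contraction or expansion, so that leaf has a unique fixed point.
\item $\cF^+(x)\cap\cF^-(y)=\emptyset$. If they met at a point $z$, then $z$ would be fixed, since $g$ preserves both leaves and they meet in at most one point. Then $\cF^+(x)$ would carry two fixed points, which the first observation rules out.
\end{itemize}

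By symmetry, $\cF^-(x)\cap\cF^+(y)=\emptyset$ as well.

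Next I would construct a lozenge with corner $x$. Choose the face and ray $r^-$ of $\cF^-(x)$ pointing toward $y$; more precisely, its projection to $\Lambda^+$ starts along the pseudo-interval $\llbracket \cF^+(x),\cF^+(y)\rrbracket$. The $\cF^+$-saturation of $r^-$ is $g$-invariant. Because $\cF^+(y)$ does not meet $\cF^-(x)$, this saturation is bounded on the $y$ side by a leaf $l^+$, or a nonseparated family of leaves, in $\Lambda^+$. That leaf lies in the pseudo-interval and is $g$-invariant, so by \ref{Axiom_A1} or \ref{Axiom_nonseparated} it carries a fixed point $x_1$. By construction $l^+$ makes a perfect fit with $r^-$. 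Repeating the argument symmetrically from $x_1$ with $\cF^-(x_1)$ gives the other pair of sides. To show the resulting region is a genuine product rectangle, i.e.\ a lozenge with corners $x$ and $x_1$, I would use the contraction and expansion of $g$ along $\cF^\pm(x)$ and $\cF^\pm(x_1)$. This rules out any leaf entering the region and exiting through a side.

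Iterating then yields a sequence of lozenges $x=x_0,x_1,x_2,\dots$ whose consecutive members share corners. Each step moves strictly along $\llbracket \cF^+(x),\cF^+(y)\rrbracket$, passing either through an interval of the decomposition in Theorem \ref{thm_pseudo_interval_decomp} or across a nonseparated pair. That decomposition is finite, and each interval can be crossed by only finitely many fixed corners, since fixed points on a leaf are isolated. Hence the process terminates at a fixed point on $\cF^+(y)$, which must be $y$ itself by the first observation. This gives the chain.

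For the converse, let $L$ be a lozenge of $\mathcal C$ with corner $x$. It lies in a single quadrant of $x$, and that quadrant contains at most one lozenge with corner $x$: its two sides are the perfect-fitting leaves determined by the rays of $\cF^\pm(x)$ bounding the quadrant. So $g^k$ preserves $L$, and therefore fixes its opposite corner. Propagating this through the chain's connectedness, and using that corners shared along a side are also fixed, shows $g^k$ fixes every corner of $\mathcal C$. Here $k$ is uniform because quadrant counts at prongs are bounded. The main obstacle I expect is the lozenge step: proving that the region bounded by the two perfect fits really is a product, and handling the case where the boundary of the saturation is a nonseparated family rather than a single leaf. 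I would treat that case first via \ref{Axiom_nonseparated} and the fixed-point dynamics on each nonseparated leaf.
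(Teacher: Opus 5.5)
\textbf{There is a genuine gap, in the forward direction.} It sits at the step where you say that the leaf $l^+$ of $\partial\cF^+(r^-)$ lying on $\llbracket\cF^+(x),\cF^+(y)\rrbracket$ makes a perfect fit with $r^-$ ``by construction.'' From this you conclude that $x$ and the fixed point $x_1\in l^+$ are opposite corners of a single lozenge, and that successive corners advance monotonically along the pseudo-interval. This is false as soon as the pseudo-interval passes through a nonseparated pair.

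Take the simplest case, where $\cF^+(x)$ and $\cF^+(y)$ are themselves nonseparated. This does occur: by Proposition~\ref{prop:nonseparated_on_chain}, nonseparated leaves carry points fixed by a common element. Then the following all hold:
\begin{itemize}
\item $\llbracket\cF^+(x),\cF^+(y)\rrbracket=\{\cF^+(x),\cF^+(y)\}$, so no ray of $\cF^-(x)$ ``starts along'' it.
\item The leaves crossing $r^-$ near $x$ also accumulate on $\cF^+(y)$. So the boundary leaf on the $y$ side is $\cF^+(y)$ itself, and it makes no perfect fit with $r^-$.
\item $x$ and $y$ cannot be corners of one lozenge at all. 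Every $\cF^+$-leaf crossing a lozenge separates the $\cF^+$-leaves of its two corners, whereas nothing separates two nonseparated leaves.
\end{itemize}
The actual chain is $x\to d\to y$. Here $e\in\cF^-$ is the leaf separating $\cF^+(x)$ from $\cF^+(y)$ and making a perfect fit with both; it is unique, hence $g$-invariant, and $d$ is its fixed point. This gives two lozenges sharing a side on $e$, and $\cF^+(d)$ lies \emph{off} the pseudo-interval. The same thing happens whenever the far end of $\cF^+(r^-)$ accumulates on several nonseparated leaves $l_1,\dots,l_n$. Only the one adjacent to $r^-$ makes a perfect fit with it, and reaching the one on the pseudo-interval requires passing through the $\cF^-$-leaves between consecutive $l_s$ in this way.

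\textbf{The missing ingredient} is this structure between consecutive nonseparated leaves. It must be combined with your perfect-fit-to-lozenge step applied with $\cF^+$ and $\cF^-$ exchanged, i.e.\ a ray of $\cF^+(x_1)$ fitting $e$. Your trapping-plus-iteration sketch for that step is the right idea. Axiom \ref{Axiom_nonseparated}, which you invoke, only supplies fixed points on the nonseparated leaves, not lozenges joining them.

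\textbf{Consequences for termination.} Since the chain's projection to $\Lambda^+$ backtracks, you must count $g$-fixed leaves on the pseudo-interval plus the finitely many nonseparated jumps, not rely on monotonicity of corners. Finiteness of $g$-fixed leaves in an interval $[x_i,y_i]$ also needs a different justification than ``fixed points on a leaf are isolated.'' An accumulation leaf would itself be $g$-fixed and carry a fixed point $z$ by \ref{Axiom_A1}. Nearby fixed leaves would then cross $\cF^-(z)$, each creating a second fixed point on $\cF^-(z)$, a contradiction.

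\textbf{The converse is fine.} A lozenge with corner $x$ is determined by its quadrant at $x$. Once $g^k(L)=L$, $g^k$ preserves the quadrant of the opposite corner containing $L$, hence all its quadrants. That is why the same $k$ works along the whole chain; no bound on prong orders is needed.
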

Related to this, we also have
\begin{prop}[See \cite{barthelmePseudoAnosovFlowsPlane2026}, Proposition 2.5.3 and Section 2.5] \label{prop:nonseparated_on_chain}
For any collection of pairwise nonseparated leaves in $P$, there is a chain of lozenges, fixed by some nontrivial $h \in G$, and with at least one lozenge having a side on each leaf.   
\end{prop}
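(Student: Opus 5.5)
We argue that all the leaves in the collection are fixed by a common nontrivial element, and then build the lozenges directly at the fixed points. Let $\{l_i\}_{i\in I}$ be a collection of pairwise nonseparated leaves, say of $\cF^+$. Pairwise nonseparated leaves are all limits of a common sequence of leaves $l_n$, so I would reduce to this situation. The sequence $(l_n)$ determines a side of each $l_i$, and the leaves $l_i$ carry a natural linear order coming from how they sit in $P$ along the limit. I expect this order to be discrete, that is, order-isomorphic to a subset of $\mathbb{Z}$. The justification is that between two consecutive nonseparated leaves there is an $\cF^-$ ray making a perfect fit, and such rays do not accumulate.

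\textbf{Step 1: a common fixing element.} By \ref{Axiom_nonseparated}, some nontrivial $g\in G$ fixes $l_1$. By \ref{Axiom_A1}, $g$ has a fixed point $x_1\in l_1$. Passing to a power, we may assume $g$ fixes each half-leaf at $x_1$ and each side of $l_1$. Then $g$ preserves the side of $l_1$ on which the $l_n$ accumulate. It therefore maps the set of leaves nonseparated from $l_1$ from that side to itself, preserving the linear order.

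A power of $g$ preserving orientations is an order-preserving bijection of a discrete linear order that fixes $l_1$. Such a map fixes every element. So $h=g^k$ fixes every $l_i$, and by \ref{Axiom_A1} it has a fixed point $x_i\in l_i$ for each $i$.

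\textbf{Step 2: building the lozenges.} At each $x_i$, let $r_i$ be the ray of $\cF^-(x_i)$ pointing to the side where the $l_n$ accumulate. Consider the $\cF^+$-leaves crossed by $r_i$.
\begin{itemize}
\item The leaf $l_{i+1}$ is not crossed by $r_i$, since otherwise it would be separated from $l_i$.
\item Hence $r_i$ makes a perfect fit with some leaf $f_i$ of $\cF^+$ lying on the frontier of $\cF^+(r_i)$, between $l_i$ and $l_{i+1}$.
\item This perfect fit is unique, so $h$ fixes $f_i$ and hence has a fixed point $y_i\in f_i$.
\item Symmetrically, starting from the ray of $l_i$ toward $l_{i+1}$, one gets a perfect fit with a leaf of $\cF^-$.
\end{itemize}
Using the contraction and expansion of $h$ near $x_i$ and $y_i$, together with the product structure of the region bounded by these rays, I would then show that $x_i$ and $y_i$ are opposite corners of a lozenge with a side on $l_i$.

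\textbf{Step 3: connecting into a chain.} Each consecutive pair of lozenges shares a corner or a side, in the configuration of adjacent lozenges along the $l_n$ side. This gives a chain. Alternatively, one can appeal to Theorem~\ref{thm_fixed_connected}: the fixed points $x_i$ of $h$ are connected by a chain of lozenges, and the chain fixed by $h$ can be taken to include the lozenges constructed above.

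\textbf{Main obstacle.} I expect the hard part to be Step 2. Identifying that the region bounded by $r_i$, $f_i$ and the perfect-fit rays is a genuine lozenge, with the two corners on $l_i$ and $f_i$ (or $l_{i+1}$) and not a degenerate region, requires a careful use of \ref{Axiom_A1} together with the uniqueness of perfect fits. A second delicate point is the discreteness claim in Step 1, which is needed when the collection of nonseparated leaves is infinite.
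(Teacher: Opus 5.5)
The paper does not prove this statement; it imports it from the cited monograph. Your sketch therefore has to stand on its own. It has the right skeleton (a common stabilizer, then lozenges from fixed perfect fits, then a chain), but the two steps that carry the real content are missing.

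First, Step 1 rests entirely on the claim that the leaves nonseparated from $l_1$ on the given side form an order isomorphic to a subset of $\mathbb{Z}$. Without it the argument already fails for a pair $l_1<l_2$ with infinitely many nonseparated leaves between them. An order-preserving bijection fixing one element need not fix another: for instance $(a,b)\mapsto(a,a+b)$ on $\mathbb{Z}\times\mathbb{Z}$ with the lexicographic order fixes $(0,0)$ and moves $(1,0)$. Your justification is circular, because it invokes the perfect fit between ``consecutive'' nonseparated leaves, which presupposes that consecutive leaves exist. The claim that ``such rays do not accumulate'' is simply asserted. Nothing you say excludes, say, an infinite increasing sequence of nonseparated leaves followed by one more. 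Excluding this is a genuine step where the Anosov-like axioms have to be used, and you do not supply it.

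Second, Step 2 opens with a non sequitur and skips the key geometric input. That $r_i$ does not cross $l_{i+1}$ implies nothing about $r_i$ making a perfect fit. In fact $l_{i+1}$ itself lies in the closure of $\cF^+(r_i)$, since the $l_n$ cross $r_i$ near $x_i$ and accumulate on $l_{i+1}$. So ``the frontier between $l_i$ and $l_{i+1}$'' does not single out anything. The perfect fit of $r_i$ with the $\cF^+$-leaf of the far corner is an \emph{output} of the lozenge, not an input. Uniqueness also requires a prescribed side: if $l_i$ has both a predecessor and a successor, then $r_i$ is a common side of two lozenges of the chain and makes a perfect fit on each of its sides.

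What you actually need, and dismiss as ``symmetric'', is that adjacent nonseparated leaves $l_i,l_{i+1}$ both make perfect fits with a single leaf $e_i$ of $\cF^-$ separating them. Given that, uniqueness of the perfect fit on a prescribed side yields $h(l_i)=l_i\Rightarrow h(e_i)=e_i\Rightarrow h(l_{i+1})=l_{i+1}$ with no order-type argument. Discreteness is then only needed to reach members that are not finitely many steps apart. The standard lemma that two leaves fixed by the same nontrivial element and making a perfect fit are sides of an invariant lozenge then gives lozenges with corners $x_i,c_i$ and $c_i,x_{i+1}$, where $c_i$ is the fixed point on $e_i$. These share the side on $e_i$, which is exactly the adjacency you merely assert in Step 3.

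Your fallback to Theorem~\ref{thm_fixed_connected} would indeed bypass Step 2 once a common $h$ is known. However, in the cited source that theorem (2.8.7) comes after this proposition (2.5.3), so using it risks circularity. It also does nothing for the gap in Step 1.
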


There is one isomorphism class of chain that plays a special role, called a {\em scalloped region}, it will appear a few times in our arguments, (for example in the proof of Lemma \ref{lem: connect fix sets}). 

\begin{defi} \label{def_scalloped}
 A scalloped region is an open region $U \subset P$ that can be realized as the interior of a chain of lozenges $\{L_i : i \in \mathbb{Z}\}$ such that for all $i$, $L_i$ and $L_{i+1}$ have a $\cF^+$-side in common, {\em and} can also be realized as the interior of a chain $\{L'_i : i \in \mathbb{Z}\}$ such that for all $i$, $L'_i$ and $L'_{i+1}$ have a $\cF^-$-side in common.   See Figure \ref{fig:scalloped-region}.
\end{defi}

\begin{figure}[h]
    \centering
\includegraphics[width=0.75\linewidth]{figures/scalloped_region.pdf}
    \caption{A scalloped region}
    \label{fig:scalloped-region}
\end{figure}

\subsection*{Compactification}
Fenley \cite{fenleyIdealBoundariesPseudoAnosov2012} constructed a compactification of the orbit space of an Anosov flow by a ``circle at infinity"; a variant on this construction can also be used to compactify any bifoliated plane (see \cite{bonattiActionCircleInfinity2023}). 
 We will primarily use this as a way to keep track of the cyclic order of ends of leaves, and to constrain the dynamics of individual elements.  
We summarize the relevant properties below, taken from 
Fenley's work \cite{fenleyIdealBoundariesPseudoAnosov2012} and
Bonatti's generalization \cite{bonattiActionCircleInfinity2023}.  See \cite[Section 3.1]{barthelmePseudoAnosovFlowsPlane2026} for an exposition.  

\begin{thm} \label{thm_boundary}
Let $(P, \cF^+, \cF^-)$ be a bifoliated plane.  Then $P$ admits a compactification by a circle $\partial P$ such that $P \cup \partial P$ is a closed disc, 
and the following hold: 
\begin{enumerate}[label=(\arabic*)]
\item Each ray $r$ of each leaf limits to a unique point of $\partial P$, called the {\em endpoint of $r$}.
\item\label{item:same endpoint subset of Z} The set of rays limiting to the same boundary point is countable, and can be indexed by a (finite or infinite) subset $I \subset \mathbb{Z}$ such that $r_i$ makes a perfect fit with $r_{i+1}$ for all $i \in I$.  
\item \label{item_perfect_fit} Any two rays making a perfect fit have the same limit. 
\item The set of rays limiting to any open subset of $\partial P$ is uncountable.  
\end{enumerate}
Moreover, there is a unique compactification with the three properties above, and any group action on $P$ by automorphisms extends to act on $\partial P$ by homeomorphisms. 
\end{thm}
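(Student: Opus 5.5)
The plan follows Fenley's construction \cite{fenleyIdealBoundariesPseudoAnosov2012}, in the generality of \cite{bonattiActionCircleInfinity2023}. The circle is built combinatorially as a completion of the cyclically ordered set of ends of leaves. Afterwards one checks that the natural topology on $P\cup\partial P$ is that of a closed disc.

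\textbf{Step 1 (rays and their cyclic order).} Let $\mathcal R$ be the set of all rays of leaves of $\cF^+\cup\cF^-$. For a nonsingular leaf these are its two half-leaves, and for a $k$-prong they are its $k$ prong rays. Every leaf is a properly embedded line or star, and two leaves meet in at most one point. The planar embedding therefore induces a well-defined cyclic order on the ends, that is, on $\mathcal R$ up to the obvious identification of rays sharing a tail.

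We then declare $r\sim r'$ when one of the two cyclic intervals they bound contains no ``essential'' ray. Concretely, every ray strictly between them belongs to a finite or $\mathbb Z$-indexed sequence $r=r_0,r_1,\dots$ in which consecutive rays make perfect fits, or are tails of a common leaf. The quotient $\mathcal R/\!\sim$ inherits a cyclic order. Its Dedekind completion $\partial P$ is a complete cyclic order without gaps.

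The set $\partial P$ is separable. Indeed, leaves through a countable dense subset of $P$ give a countable set of rays that is order-dense in $\mathcal R/\!\sim$, since a nondegenerate cyclic interval of classes must contain a ray of a leaf through any open set it cuts off. Hence $\partial P$ is homeomorphic to $S^1$. Properties (2) and (3) hold essentially by construction. The countability in (2) follows because a chain of perfect fits accumulating at a single class is indexed by consecutive perfect fits, and hence by a subset of $\mathbb Z$.

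\textbf{Step 2 (topology on $P\cup\partial P$).} For a leaf $l$ and a component $H$ of $P\setminus l$, the ends of $l$ bounding $H$ cut out a cyclic interval $I_H\subset\partial P$. We take the sets $H\cup I_H^\circ$ as basic neighbourhoods of boundary points, together with the open sets of $P$.

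We must then show three things.
\begin{itemize}
\item[(a)] These sets form a neighbourhood basis at each $\xi\in\partial P$. This is the density of rays near $\xi$ and gives property (4).
\item[(b)] The space is compact, Hausdorff and second countable.
\item[(c)] Each ray converges to its class, which is property (1).
\end{itemize}
Item (c) follows from properness of leaves combined with (a). From (a)--(c), one obtains that $P\cup\partial P$ is a closed disc with boundary circle $\partial P$ and interior $P$. This uses a recognition argument, for instance exhausting $P$ by polygonal Jordan domains bounded by leaf segments, as in the Schoenflies theorem.

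\textbf{Step 3 (uniqueness and naturality).} Let $\overline P'$ be any compactification satisfying (1)--(3) together with density. The endpoint map $\mathcal R\to\partial P'$ is monotone for the cyclic order and by (3) factors through $\sim$. By (2) it is injective on classes, and by density its image is dense. It therefore extends uniquely to an order isomorphism $\partial P\to\partial P'$, which is compatible with the topologies on the discs. Foliation-preserving homeomorphisms preserve $\mathcal R$, the cyclic order and perfect fits, so they act on $\partial P$; by uniqueness this action extends continuously.

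The main obstacle is Step 2(a): showing that half-planes bounded by leaves really shrink to every ideal point, including ideal points that are not the endpoint of any ray. It is also where non-Hausdorff branching and infinite chains of perfect fits, as in scalloped regions, cause trouble. I would first try to prove that every nondegenerate cyclic interval of $\partial P$ contains the interval $I_H$ of some leaf. This would use Theorem \ref{thm_pseudo_interval_decomp} to find a separating leaf between two non-equivalent rays.
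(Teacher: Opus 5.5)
The paper does not reprove this theorem. It cites Fenley and Bonatti, remarks that (3) is immediate from the construction, refers to \cite{barthelmePseudoAnosovFlowsPlane2026} for (2) in the Anosov-like case, and indicates how (2) follows in general: two $\cF^+$-leaves with a common endpoint are separated by an $\cF^-$-leaf, because only countably many leaves share an endpoint. Your outline follows the cited construction. There is, however, a genuine gap in Step 1: the claim that the Dedekind completion of $\mathcal R/\!\sim$ is ``without gaps'', together with the claim that (2) holds ``by construction''.

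For $\sim$ to be an equivalence relation whose classes are chains, it must mean ``joined by a finite chain of consecutive perfect fits''. Then (2) is built in, but density of $\mathcal R/\!\sim$ becomes a real statement. It fails as soon as an infinite chain $r_0,r_1,r_2,\dots$ of consecutive perfect fits is followed in the cyclic order by a ray $r'$, or by another chain $\dots,r'_{-1},r'_0$, with nothing strictly in between. Then $[r_0]$ and $[r']$ are adjacent classes, and the completion has a jump, so it is not a circle.

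If instead you read your definition literally, $r'$ may be any ray such that everything strictly between lies on the chain of $r$. Then $\sim$ is not transitive, and after taking the transitive closure its classes are no longer chains, so (2) is no longer by construction.

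Either way, ruling out such accumulations of perfect fits is exactly the nontrivial content of (2). In any compactification as in the theorem, those rays have only countably many rays between them, so by (4) they land at one point, contradicting (2). Your construction has therefore assumed what needs proof.

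The separability argument has the same hole. The claim that ``a nondegenerate interval of classes must contain a ray of a leaf through any open set it cuts off'' presupposes that two inequivalent classes cut off an open subset of $P$, and that is precisely what fails in the configuration above. Property (4) depends on the same point, since a dense subset of the circle could a priori be countable on some arc.

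There are two ways to close the gap.
\begin{itemize}
\item Prove a direct no-accumulation lemma: if only countably many rays lie in one cyclic interval between $r$ and $r'$, then $r$ and $r'$ are joined by a finite chain of consecutive perfect fits.
\item Reorder the argument with a countability-based identification: call two rays equivalent when one of the cyclic intervals between them contains at most countably many rays. Once separability of the set of ends is honestly established, this gives density of the quotient and property (4) directly. Statement (2) then becomes a separate lemma about the countable set of rays landing at one point, and this is where the paper's hint (a leaf of the other foliation separating two leaves with a common endpoint) is used.
\end{itemize}
Your suggested route through Theorem~\ref{thm_pseudo_interval_decomp} does not by itself achieve either. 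That theorem only sees leaves of one foliation. Even when it produces a separating leaf, you still have to show that its end is not absorbed into the chain of $r$ or of $r'$. In the nonseparated case, the separating $\cF^-$-leaf is itself part of that chain.
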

The second part of Item \ref{item:same endpoint subset of Z} and Item \ref{item_perfect_fit} are not stated in \cite{bonattiActionCircleInfinity2023}, but Item \ref{item_perfect_fit} follows directly from the construction.  
Item  \ref{item:same endpoint subset of Z} is explained in \cite[Section 3.1]{barthelmePseudoAnosovFlowsPlane2026} under the assumption that $P$ admits an Anosov-like action (which is all we consider here). The general statement can be proved by first showing that if two leaves $l_1,l_2$ of $\cF^+$ have a shared endpoint, then there exists at least one leaf $l^-\in \cF^-$ that separates $l_1$ from $l_2$, which follows easily from the fact that at most countably many leaves share the same endpoint.%

The action of specific elements on $\partial P$ can also easily be described: 

 \begin{prop}[Boundary action, see \cite{barthelmePseudoAnosovFlowsPlane2026}, Proposition 3.1.14] \label{prop:boundary_action}
Let $(P,\cF^+, \cF^-)$ be a nontrivial bifoliated plane with Anosov-like action of $G$, and $g \in G$ nontrivial.  
\begin{itemize} 
\item If $g$ fixes a non-corner point $x$, then the only points of $\partial P$ fixed by $g$ are endpoints of $\cF^\pm(x)$.

\item If $g$ fixes all corners in a maximal chain of lozenges $\mathcal{C}$, then the set of fixed points of $g$ on $\partial P$ is the closure of the set of endpoints of leaves of the sides of lozenges in $\mathcal{C}$.

\item If $g$ acts freely on $P$, then it either has one or two fixed points on $\partial P$, or has exactly four fixed points and preserves a scalloped region.  
\end{itemize} 
\end{prop}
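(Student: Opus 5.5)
The plan is to treat the three items separately. Each time, I combine the dynamics given by Axiom \ref{Axiom_A1} with Theorem \ref{thm_fixed_connected} and the density of ray endpoints in $\partial P$ (Theorem \ref{thm_boundary}). Throughout, I replace $g$ by a power fixing every ray of leaves through its fixed points. This changes nothing, since a homeomorphism of the circle and its powers have the same fixed points whenever the power has only finitely many fixed points or the fixed set is shown to be invariant.

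For the first item, let $x$ be a non-corner point fixed by $g$. By \ref{Axiom_A1}, $g$ contracts one leaf through $x$ and expands the other; say $g$ contracts $\cF^-(x)$ and expands $\cF^+(x)$. The endpoints of the $2k$ rays of $\cF^\pm(x)$ are then fixed, and they cut $\partial P$ into arcs, one for each quadrant $Q$ of $x$. I claim $g$ has no fixed point in the interior of such an arc $I$. Pick $y\in Q$ and consider the leaf $l=\cF^+(y)$, which crosses the $\cF^-$-ray bounding $Q$. Under $g^{-n}$, the crossing point moves out along this ray, which $g^{-1}$ expands. Then either:
\begin{itemize}
\item the leaves $g^{-n}(l)$ leave every compact set, in which case their endpoints converge to the endpoint of that $\cF^-$-ray; or
\item they accumulate on a limit leaf (or a union of pairwise nonseparated leaves) that is $g$-invariant.
\end{itemize}
In the second case, \ref{Axiom_A1} gives a second fixed point of $g$. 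By Theorem \ref{thm_fixed_connected}, $x$ would then be a corner, which is a contradiction. Running the symmetric argument with $g^{n}$ and $\cF^-$-leaves crossing the $\cF^+$-ray shows that the endpoints of leaves in $Q$ move monotonically from one end of $I$ to the other. Since such endpoints are dense in $I$ (Theorem \ref{thm_boundary}(4)), $g$ has north–south type dynamics on $I$ with no interior fixed point.

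For the second item, let $\mathcal C$ be a maximal chain whose corners are fixed by $g$. The endpoints of the sides of its lozenges are fixed, hence so is their closure $F$. Each complementary arc of $F$ is bounded by endpoints of two rays at a single corner $c$ of $\mathcal C$, and it corresponds to a quadrant of $c$ that is not a lozenge of $\mathcal C$. I then repeat the argument of the first item in that quadrant. A $g$-invariant limit leaf there would produce a new fixed point, and hence by Theorem \ref{thm_fixed_connected} a lozenge adjacent to $\mathcal C$, contradicting maximality. Thus $g$ has no fixed points outside $F$.

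For the third item, suppose $g$ acts freely. By \ref{Axiom_A1}, $g$ fixes no leaf of $\cF^\pm$. The main step is to analyse the pseudo-axis of $g$ in $\Lambda^+$, namely the union of the pseudo-intervals $\llbracket l, g^n l\rrbracket$. Endpoints of $g^{\pm n}(l)$ converge to attracting and repelling sets in $\partial P$, and I expect these to give one or two fixed points. To show there are at most two unless a scalloped region is present, I argue as follows. Suppose $\xi_1,\xi_2,\xi_3$ are fixed points cut off by the axis. Choose leaves with endpoints separating them, and take nested limits of their $g$-iterates. This produces infinitely many $g$-invariant families of pairwise nonseparated leaves in both foliations. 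Proposition \ref{prop:nonseparated_on_chain} turns these into chains whose lozenges share $\cF^+$-sides and also chains sharing $\cF^-$-sides, and these must form a scalloped region preserved by $g$. Its four "corner" boundary points are the only fixed points. This third step, in particular ruling out exactly three fixed points or infinitely many, is where I expect the main difficulty.
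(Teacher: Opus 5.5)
The paper does not prove this proposition; it is quoted from \cite{barthelmePseudoAnosovFlowsPlane2026}. Your argument therefore has to stand on its own, and there is a genuine gap: the third item is not proved. Your sketch assumes that the translates $g^n(l)$ of a leaf $l$ of the axis leave every compact set, and this fails in particular in the scalloped case. The natural organization is the dichotomy used in the proof of Lemma~\ref{lem: unique fixed point or two fixed points}.

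\emph{If the $g^n(l)$ escape as $n\to\pm\infty$}, nothing like ``ruling out three fixed points'' is needed. Since $l$ separates $g^{-1}(l)$ from $g(l)$, the endpoints of the translates move monotonically to limits $\xi^\pm$ and cut $\partial P\smallsetminus\{\xi^+,\xi^-\}$ into fundamental domains permuted by $g$. Any further fixed point would then be a common endpoint of all the $g^n(l)$, which forces $\xi^+=\xi^-$, i.e.\ the one-fixed-point case.

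\emph{If they do not escape}, they accumulate on a $g$-invariant union of pairwise nonseparated leaves. A finite union gives a power of $g$ fixing a leaf, which you must exclude or treat. An infinite union requires the nontrivial input that such a family bounds a scalloped region preserved by $g$, whose four ideal corners are then the only fixed points. Your step that ``nested limits of $g$-iterates'' produce infinitely many $g$-invariant nonseparated families in both foliations is unsupported. Moreover, Proposition~\ref{prop:nonseparated_on_chain} only gives a chain fixed by \emph{some} element $h$, not a $g$-invariant scalloped region.

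For this item the passage to a power is also not harmless. The conclusion concerns $\mathrm{Fix}(g)$ itself, including that it is nonempty, and your justification that a circle homeomorphism and its powers have the same fixed points is false in general (think of $g$ permuting the prongs of a fixed singularity). For the first two items the reduction is fine, since the substantive inclusion only uses $\mathrm{Fix}(g)\subseteq\mathrm{Fix}(g^k)$.

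The first two items also have a gap. When the leaves $g^{-n}(l)$ crossing the $\cF^-$-ray $r^-$ do not escape, their limit is a union of pairwise nonseparated leaves that is $g$-invariant only as a set. Axiom~\ref{Axiom_A1} needs a leaf actually fixed by $g$, and if the family is infinite, $g$ could a priori permute it with no fixed leaf. You need an extra argument excluding this, for instance singling out a leaf of the family canonically attached to the $g$-invariant ray $r^-$ (such as one making a perfect fit with it), which is then fixed. This is where the non-corner hypothesis (resp.\ maximality of $\mathcal C$) must really be used.

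Your last step is also incomplete. Density of endpoints (Theorem~\ref{thm_boundary}) does not exclude fixed points in $I$, and you have not shown that the leaves crossing $r^+$ and $r^-$ account for all of $I$. Once escape is known on both sides, the following works. Order $I$ from the endpoint of $r^+$ to that of $r^-$.
\begin{itemize}
\item The endpoints in $I$ of the $\cF^+$-leaves through points of $r^-$ sweep monotonically from some $\alpha$ to the endpoint of $r^-$.
\item The endpoints of the $\cF^-$-leaves through points of $r^+$ sweep from the endpoint of $r^+$ to some $\beta$.
\item $\alpha<\beta$, because such leaves through points close to $x$ intersect.
\end{itemize}
Translates of a single leaf under powers of $g$ then give fundamental domains covering $I$. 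A fixed point in $I$ would be a common endpoint of leaves escaping towards an endpoint of $r^\pm$, which is absurd.
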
 

\begin{rmk} 
    Note that the three possibilities listed in the above proposition are not exhaustive: One may have an element $g\in G$ that fixes a unique point $p$ but such that $p$ is the corner of a lozenge. However, in this case, there exists $k$ such that $g^k$ fixes all corners in the maximal chain of lozenges containing $p$. In other words, the list above becomes exhaustive only up to replacing $g$ by a power when necessary.  
\end{rmk}

\subsection{Elements with distinct quasi-axes in $X^+$} \label{subsec_pseudo-axes}
Going forward, we assume $G$ acts on $P$ with an Anosov-like action admitting an extremal Smale class. The first step to show $X^+$ is nonelementary is to find elements in $G$ which act loxodromically on $X^+$.  To do this, we first build elements with specific axes in $\Lambda^+$, then relate these to 
quasi-axes in $X^+$. 

Since $\Lambda^+$ is a non-Hausdorff tree,  any homeomorphism acting freely has an invariant {\em axis}, defined as the set of points $x \in \Lambda^+$ such that $x$ separates $g^{-1}(x)$ from $g(x)$. See \cite[Theorem A]{fenleyPseudoAnosovFlowsIncompressible2003}.  Such axes can be thought of as bi-infinite pseudo-intervals, and have a similar structure 
as pseudo-intervals: an axis is either homeomorphic to $\mathbb{R}$, or can be uniquely decomposed into a union of intervals $[x_i, y_i]$, indexed by $\mathbb{Z}$ such that $x_i$ is nonseparated with $y_{i-1}$.

\begin{prop}
  \label{prop: free element condition}
    Suppose $P$ is not skew nor trivial and $G$ has a Smale-bounded Anosov-like action on $P$. Then, there exists an element $g \in G$ acting freely on $P$ such that:
    \begin{enumerate}
        \item The axis $\cA(g)$ of $g$ in the leaf space $\Lambda^+$ has one of the following forms:
        \begin{enumerate}
            \item \label{it: non sep}
            $\cA(g) = \bigcup_i [x_i,y_i]$ with $y_i$ non-separated from $x_{i+1}$; or
            \item \label{it: prong}
            $\cA(g)$ is a properly embedded copy of $\mathbb R$ containing a prong leaf $l$ with singularity $p$ dividing $\cA(g)$ (See Definition \ref{def: separating prong} below).
        \end{enumerate}
        \item \label{it: not chain}
            The axis $\cA(g)$ is not contained inside the projection to $\Lambda^+$ of any chain of lozenges.
    \end{enumerate}
\end{prop}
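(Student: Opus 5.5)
The natural way to build $g$ is to take a product of two elements with very different fixed leaves, and to use the non-Hausdorff or singular structure guaranteed by Theorem \ref{thm:trichotomy} so that the axis is forced to cross a branching point. Since $P$ is neither skew nor trivial, Theorem \ref{thm:trichotomy} gives two cases. Either there is a singular point $p$, or $\Lambda^+$ is non-Hausdorff with two-sided branching. In the second case there are nonseparated leaves $l_1, l_2$. By Proposition \ref{prop:nonseparated_on_chain} and Axiom \ref{Axiom_nonseparated}, both leaves are fixed by some nontrivial $h$, and they lie on a chain of lozenges $\mathcal C$ fixed by a power of $h$.

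In the prong case I would take $h$ fixing $p$, given by Axiom \ref{Axiom_prongs_are_fixed}. I would also use the non-corner fixed points $a,b$ from Proposition \ref{prop extremal smale}, fixed by elements $g_a, g_b$. The key feature of these points is the asymmetry $\cF^+(a)\cap\cF^-(b)\neq\emptyset$ but $\cF^-(a)\cap \cF^+(b)=\emptyset$. By Proposition \ref{prop:boundary_action}, $g_a$ and $g_b$ act on $\partial P$ with only four fixed points each, with attracting/repelling (north--south type) dynamics between the endpoints of $\cF^\pm(a)$, and likewise for $b$.

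The candidate is then $g = g_a^{n} g_b^{-m}$ for large $n,m$. In the prong case I would instead conjugate so that the construction passes through the relevant feature, for instance $g = g_a^n\, (k g_b k^{-1})^{-m}$, with $k$ chosen so that $k(b)$ lies in a different complementary component of the prong leaf $\cF^+(p)$, or on the other side of $l_1$ versus $l_2$.

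To check that $g$ acts freely, I would use a ping-pong argument on $\partial P$. Take small intervals around the endpoints of $\cF^-(a)$ and of $\cF^+(b)$ (the "repelling/attracting" pieces). Because $\cF^-(a)\cap\cF^+(b)=\emptyset$, the quadrant structure yields disjoint neighbourhoods $U$ and $V$ in $P\cup\partial P$ such that $g$ maps the complement of $U$ strictly into $V$. This shows $g$ has no fixed point in $P$ and has an axis in $\Lambda^+$ that passes through both $\cF^+(a)$ and $\cF^+(b)$. Fenley's Theorem A \cite{fenleyPseudoAnosovFlowsIncompressible2003} then provides the axis. For conclusion (1), the axis contains the pseudo-interval from a leaf $x$ to $g(x)$. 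By choosing the conjugator $k$ so that this pseudo-interval is forced to pass through $l_1$ and $l_2$ (respectively, through two distinct faces of the prong leaf), the axis is of type \ref{it: non sep} (respectively \ref{it: prong}). This uses Theorem \ref{thm_pseudo_interval_decomp}: any path between leaves on opposite sides of a nonseparated pair must contain both leaves.

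Conclusion \ref{it: not chain} is where I expect the real difficulty. If $\cA(g)$ were contained in the projection of a chain $\mathcal C'$, that projection would be $g$-invariant (axes are unique), so $g$ would preserve $\mathcal C'$. This is not yet a contradiction, because free elements preserving scalloped regions do exist (Proposition \ref{prop:boundary_action}). The point to exploit is that $\cA(g)$ contains $\cF^+(a)$ for the non-corner point $a$. One must show that $\cF^+(a)$ cannot lie in the projection of a chain; the argument would be that otherwise $\cF^-$ leaves through chain corners fixed by some element would force, via Theorem \ref{thm_fixed_connected}, that $a$ is a corner. I would first try to argue this directly from the structure of lozenges, replacing $a$ by a nearby iterate if necessary. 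If that fails, I would take $n,m$ large enough that $\cA(g)$ passes through points of $\cF^+(a)$ arbitrarily close to $a$, whose $\cF^-$-leaves also avoid the chain.
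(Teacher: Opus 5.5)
There is a genuine gap, and it is most serious in conclusion (2). You argue that $\cF^+(a)$ cannot lie in the projection of a chain because $a$ is a non-corner. That is false. Lozenges have open interiors, and (for instance for transitive actions) non-corner fixed points are dense in $P$, so leaves through non-corner fixed points routinely cross lozenges.

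The step ``$\cA(g)$ lies in the projection of $\cC'$, hence $g$ preserves $\cC'$'' is also unjustified. You only get that $\cA(g)$ lies in the projections of both $\cC'$ and $g\cC'$; what you need is uniqueness of the relevant chain. The paper gets this uniqueness from the branching that sits on the axis. By Claim \ref{claim: chain lozenges and prong or non separated leaves}, a chain meeting two nonseparated leaves $l_1,l_n$ of $\cA(g)$ must have sides on them, and similarly for a dividing prong. So every chain whose projection contains $\cA(g)$ lies in the unique maximal chain $\cC'$ with sides on the $l_i$. This chain is fixed by some $h$ (Proposition \ref{prop:nonseparated_on_chain}) and does not depend on the powers used to build $g$.

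Non-cornerness of $a$ is used only to see that the endpoints of $\cF^\pm(a)$ avoid the closed set $\mathrm{Fix}(h)\cap\partial P$. One then shrinks the interval $V_2$ near an endpoint of $\cF^-(a)$, into which $g$ pushes $\partial P\setminus U_2$, and raises the powers. This makes the attracting point of $g$ avoid $\mathrm{Fix}(h)$, so $g$ cannot preserve $\cC'$.

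Conclusion (1) and freeness are not established either.

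\begin{itemize}
\item Since $\cF^-(b)$ meets $\cF^+(a)$, the pseudo-interval $\llbracket \cF^+(a),\cF^+(b)\rrbracket$ is a genuine interval. So an axis through both leaves need not be broken or divided.
\item Your conjugator $k$ is meant to fix this, but nothing ensures that $a$ and $k(b)$ stay partially linked, which is what drives the ping-pong of Lemma \ref{lem: non corner partially linked}. Nor does anything ensure that the attracting and repelling regions land on opposite sides of the branching.
\item The missing observation is that the pair from Proposition \ref{prop extremal smale} already has branching next to it. The leaf of the boundary of $\cF^+(\cF^-(a))$ separating $\cF^+(a)$ from $\cF^+(b)$ is either one of a nonseparated family $l_1,\dots,l_n$ (with $l_1$ meeting $\cF^-(a)$ and $l_n$ meeting $\cF^-(b)$) or a singular leaf.
\item The paper then uses $(\alpha^m\beta^{-m})^k(\alpha^m\beta^m)^k$ precisely to put the boundary attractor in $V_2$, beyond $l_1$, and the repeller in $U_2$, beyond $l_n$. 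A single product of powers such as your $g_a^n g_b^{-m}$ does not achieve this.
\item Ping-pong on $P\cup\partial P$ only confines fixed points to $U\cup V$; it does not exclude a fixed point in $P$. That needs the structure of fixed sets of Anosov-like actions, as in Lemma \ref{lem: connect fix sets}. A fixed point in $P$ would give, via Theorem \ref{thm_fixed_connected} and Proposition \ref{prop:boundary_action}, a connected family of fixed leaves joining $U$ to $V$ across leaves of both foliations that separate them.
\item A fixed $k$-prong has $2k$ fixed points on $\partial P$, not four.
\end{itemize}
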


\begin{rmk}
    Theorem \ref{thm: genericity broken axis} gives quite general conditions under which elements as in case \ref{it: non sep} not only exist, but are generic.
\end{rmk}

\begin{defi}[Dividing prong singularity] \label{def: separating prong} 

Let  $[x,y]$ be an interval in $\Lambda^+$, and $p$ a prong singularity with $\cF^+(p) \in (x,y)$.  We say $\cF^+(p)$ (or just $p$ for short) {\em divides the interval $[x,y]$} if the leaf $x$ is contained in a single quadrant $Q_1$ of $p$, and 
 denoting by $Q_1, \dots, Q_{2n}$ the quadrants of $p$, enumerated in cyclic order, the leaf $y$ does not intersect 
 $Q_{2n-1}, Q_{2n}, Q_1, Q_2$ nor $Q_3$.
 
 We say a prong $p$ divides a properly embedded copy of $\R$ if it divides some compact subinterval. 
\end{defi}

\begin{figure}[h]
\centering
\labellist
\small
\pinlabel{$p$} [bl] at 117 123
\pinlabel{$p$} [bl] at 352 123

\pinlabel{\color{red} $x$} [tr] at 39 73
\pinlabel{\color{red} $y$} [tl] at 200 76

\pinlabel{\color{myorange} $\R \subset \Lambda(\cF^+)$} [r] at 26 121

\endlabellist
\includegraphics[width=.75\textwidth]{figures/separating_prong}
\caption{Dividing and non dividing prongs of a copy of $\R$ in $\Lambda(\cF^+)$}
\end{figure}

\begin{rmk}  \label{rmk: separating prong separates}
When $p$ divides $[x,y]$, it in fact forms a kind of ``division" between the stable and/or unstable saturations of the quadrants containing $x$ and $y$: if $x$ is in $Q_1$, then any leaf of $\cF^\pm$ intersecting $Q_1$ is disjoint from any leaf intersecting $y$. 
In particular, $x$ and $y$ are distance at least $2$ in $X^+$.

We will often use the terminology $p$ divides $x$ from $y$ to mean that $p$ divides $[x,y]$.  
\end{rmk}

To prove Proposition \ref{prop: free element condition}, we need two preliminary technical lemmas.
Here and throughout, when $S$ is a connected topological space (typically $P$, $\partial P$, or $P \cup \partial P$), we say that a subset $E \subset S$ {\em separates} subsets $A$ and $B$ in $S$ if $A$ and $B$ lie in distinct connected components of $S \setminus E$.  

\begin{lem} 
\label{lem: connect fix sets}
Let $g \in G$. Suppose that there exists two open disjoint sets $U,V$, such that, for all $n\neq 0$, the fixed set of $g^n$ on $\partial P$ is contained in $U\cup V$, and both $U$ and $V$ contains at least one fixed point of $g^n$. If there exists $x \in P$ such that the set of endpoints on $\partial P$ of $\cF^+(x)$ and of $\cF^-(x)$ both separate $U$ and $V$, then $g$ acts freely and fixes exactly 2 points on $\partial P$.
\end{lem}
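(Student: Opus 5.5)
We argue by contradiction using the classification in Proposition \ref{prop:boundary_action}, after replacing $g$ by a suitable power. Fix $n \neq 0$ such that $g^n$ preserves all quadrants of any fixed point and, if it fixes a corner of a chain, fixes all corners of the maximal chain (Theorem \ref{thm_fixed_connected} and the Remark following Proposition \ref{prop:boundary_action}). Since the hypotheses hold for every power, it suffices to show that $g^n$ acts freely with exactly two fixed points on $\partial P$. We then transfer back to $g$: a fixed point of $g$ in $P$ is fixed by $g^n$, and $\mathrm{Fix}_{\partial P}(g)\subset \mathrm{Fix}_{\partial P}(g^n)$. The hypothesis that both $U$ and $V$ contain fixed points of $g$ itself gives at least two fixed points of $g$, hence exactly two.

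\textbf{Step 1: $g^n$ has no fixed point in $P$.} Suppose first that $g^n$ fixes a non-corner point $z$. By Proposition \ref{prop:boundary_action}, $\mathrm{Fix}_{\partial P}(g^n)$ is exactly the set $E_z$ of endpoints of $\cF^\pm(z)$, and all of $E_z$ is fixed since $g^n$ preserves the quadrants. Now $x \ne z$ in general, but the endpoint set $E_x$ of $\cF^+(x)\cup\cF^-(x)$ separates $U$ from $V$. The key planar observation is that $\cF^+(x)$ and $\cF^+(z)$ are either equal or disjoint, and likewise for $\cF^-$. Hence one of the leaves through $z$, say $\cF^+(z)$, does not cross $\cF^-(x)$, so it lies in one complementary component of $\cF^-(x)$ in $P$. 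Combining this with the corresponding statement for the other foliation, all endpoints of $\cF^+(z)$, or all endpoints of $\cF^-(z)$, lie in a single component of $\partial P\setminus E_x$. I would make this precise by a case analysis on whether $\cF^\pm(z)$ intersect $\cF^\mp(x)$. We also need the fixed points of $g^n$ to be met by the leaves in a way that forces both $U$ and $V$ to be hit. The target is a contradiction: in each case, either $U$ or $V$ fails to contain a fixed point, or $E_z$ fixed points lie outside $U\cup V$. Here I would use that boundary points of $E_z$ separating components are cyclically interleaved: the endpoints of the $\cF^+(z)$ rays alternate with those of the $\cF^-(z)$ rays.

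For the corner case, $g^n$ fixes all corners of a maximal chain $\mathcal C$, and its boundary fixed set is the closure of the endpoints of the sides of $\mathcal C$. The chain is connected, and consecutive sides make perfect fits, so their endpoints coincide (Theorem \ref{thm_boundary}\ref{item_perfect_fit}). The set of side endpoints is therefore "connected through the chain". Since $E_x$ separates $U$ from $V$, and both $U$ and $V$ contain fixed points, some lozenge side or corner leaf of $\mathcal C$ must cross both $\cF^+(x)$ and $\cF^-(x)$ in an incompatible way. This would force a fixed leaf of $\mathcal C$ to have an endpoint outside $U\cup V$, contradicting the containment hypothesis.

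\textbf{Step 2: exactly two boundary fixed points.} Once $g^n$ acts freely, Proposition \ref{prop:boundary_action} leaves three options: one, two, or four fixed points, and in the four-point case $g^n$ preserves a scalloped region. One fixed point is impossible, since $U$ and $V$ are disjoint and each contains one. For the scalloped case, the four fixed points are the endpoints of the boundary leaves of the scalloped region, and they alternate between the $\cF^+$-type and $\cF^-$-type limits. Since $\cF^+(x)$ and $\cF^-(x)$ are single (possibly prong) leaves, at most one of them can separate these points into the partition prescribed by $U,V$, and both cannot do so simultaneously. This is the contradiction. Hence $g^n$ has exactly two boundary fixed points.

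I expect Step 1, and especially the chain-of-lozenges (corner) case, to be the main obstacle. It requires carefully controlling how the leaves through $x$ interact with the chain, including infinite chains whose side endpoints accumulate. The first approach I would try is to pick the lozenge of $\mathcal C$ closest to $x$ and argue with its corners.
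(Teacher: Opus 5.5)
Your overall structure is the paper's: argue by contradiction using Proposition \ref{prop:boundary_action} in the non-corner and chain cases, then exclude the scalloped case. Passing to $g^n$ is fine. The gap is that the one step where the hypothesis on $x$ is used is missing, and what you write in its place is wrong. From ``$\cF^+(z)$ and $\cF^+(x)$ are equal or disjoint'' it does not follow that $\cF^+(z)$ misses $\cF^-(x)$: for $z$ near $x$, both $\cF^+(z)\cap\cF^-(x)$ and $\cF^-(z)\cap\cF^+(x)$ are nonempty. Separating by the whole set $E_x$ is also the wrong target.

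The missing idea is to compare each fixed leaf with the leaf \emph{of its own foliation} through $x$. A fixed leaf $\ell\in\cF^+$ has all its endpoints in $U\cup V$, so $\ell\neq\cF^+(x)$. Being disjoint from $\cF^+(x)$, its endpoints lie in the closure of a single component of $\partial P\setminus\partial\cF^+(x)$. Since $U$ and $V$ lie in different components, the endpoints of $\ell$ are all in $U$ or all in $V$. The same holds for fixed $\cF^-$-leaves, using $\partial\cF^-(x)$. This is the paper's ``a leaf joining $U$ and $V$ would have to cross both $\cF^+(x)$ and $\cF^-(x)$'', and it is why both separation hypotheses are needed. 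So every fixed leaf has a type, $U$ or $V$.

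Non-corner case: $E_z$ meets both $U$ and $V$, so $\cF^+(z)$ and $\cF^-(z)$ have opposite types. The interleaving of rays at $z$ then gives points $u_1,v_1,u_2,v_2$ in cyclic order with $u_i\in U$ and $v_i\in V$. This is impossible, since $U$ and $V$ lie in disjoint arcs of $\partial P\setminus\partial\cF^+(x)$.

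Chain case, which you leave open: no ``closest lozenge'' is needed.
\begin{itemize}
\item The two leaves through a corner have the same type, by the same interleaving argument.
\item Two sides making a perfect fit share an endpoint (Theorem \ref{thm_boundary}), and $U\cap V=\emptyset$, so they also have the same type.
\item Connectedness of $\mathcal C$ therefore gives all side leaves one type, say $U$.
\item The boundary fixed set is the closure of their endpoints, so it lies in $\overline U$, which misses the open set $V$. This contradicts $V$ containing a fixed point, and it also handles the accumulating sides you were worried about.
\end{itemize}
Your proposed contradiction, a fixed leaf with an endpoint outside $U\cup V$, is not what occurs.

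Step 2 is also unjustified as written. The four fixed points are not endpoints of boundary leaves of the scalloped region $R$, since $g$ translates those leaves. They are the four points where the families of nonseparated boundary leaves accumulate. ``$\cF^\pm(x)$ are single leaves'' does not show that the two leaves cannot both induce the $U/V$ split. The argument that works, as in the paper, splits on whether $x\in R$.
\begin{itemize}
\item If $x\in R$: $\partial\cF^+(x)\cup\partial\cF^-(x)$ puts the four points in four distinct complementary components. Each of $U$ and $V$ lies in a single such component, because it lies in one component of each of $\partial P\setminus\partial\cF^\pm(x)$. Hence $U\cup V$ contains at most two of the four fixed points, a contradiction.
\item If $x\notin R$: one of $\cF^\pm(x)$ misses $R$ and separates none of the four points. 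Then $U$ and $V$ cannot both contain one.
\end{itemize}
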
 

\begin{proof} 
Suppose for contradiction that $g$ fixes a point in $P$. Up to considering a power $g^n$, we have that either $g^n$ fixes a unique non-corner point $p$ in $P$ or it fixes all the corners of a maximal chain of lozenges. 
By Proposition \ref{prop:boundary_action}, we have that in the first case the only fixed points of $g^n$ on $\partial P$ are the endpoints of $\cF^{\pm}(p)$, and in the later case, the set of fixed points of $g^n$ in $\partial P$ is the closure of the set of endpoints of the leaves supporting the sides of lozenges in the maximal $g^n$-invariant chain. (Note that by Theorem \ref{thm_fixed_connected} there is a unique maximal $g$-invariant chain of lozenge.) However, connectedness of chains of lozenges implies that, under the hypothesis of the lemma, some leaf fixed by $g$ will have to join $U$ and $V$, hence would need to cross both $\cF^+(x)$ and $\cF^-(x)$, a contradiction. 

Therefore $g$ must act freely.  We now use the classification of induced actions on the boundary given by Proposition \ref{prop:boundary_action}.
By our assumption, $g$ fixes at least $2$ points.  In the case of preserving a scalloped region $R$, since $g$ acts freely, and preserves the decomposition of the region into lozenges, it must act as a translation along each of the ``lines" of adjacent lozenges forming the scalloped region.  Thus, every point $y\in R$ will be such that $\cF^+(y) \cup \cF^-(y)$ separates each of the 4 fixed boundary points from each other, while for any point $y\notin R$ either $\cF^+(y)$ or $\cF^-(y)$ does not separate any of the fixed boundary points, so we get a contradiction with the assumption.
Therefore, $g$ fixes exactly 2 points on $\partial P$ as claimed. 
\end{proof}

We will now start constructing the element $g$ of Proposition \ref{prop: free element condition}, starting with one lemma building free elements that will be used to get $g$. The proofs are easiest to understand by just looking at the figures, which summarize the configurations of the relevant objects.  However, to formally describe theses, we need to introduce some notation and definitions.

Following \cite[Definition 4.1]{barthelmeOrbitEquivalencesPseudoAnosov2022}, we say that two points $a,b\in P$ are \emph{partially linked} if exactly one of the two possible intersections $\cF^+(a)\cap \cF^-(b)$ and $\cF^-(a)\cap \cF^+(b)$ is non-empty.

For an element $a\in P$, denote by $\partial \cF^+(a)$, resp.~$\partial \cF^-(a)$, the set of endpoints of the $\cF^+$, resp.~$\cF^-$, leaf through $a$, and we write $\partial \cF(a) = \partial\cF^+(a)\cup\partial \cF^-(a)$.
For two points $a,b\in P$ such that $\partial \cF(a) \cap \partial \cF(b) = \emptyset$ (which happens for instance whenever the two points $a,b$ are non-corners), observe that $a,b$ are partially linked if and only if $\partial \cF(a)$ is contained in exactly two connected components of $\partial P \smallsetminus \partial \cF(b)$.

\begin{lem} \label{lem: non corner partially linked}
Suppose $\alpha, \beta \in G$ fix points $a, b$ which are partially linked and not corners and $\cF^+(a)\cap \cF^-(b) \neq \emptyset$. Let $a_1^-,a^-_2\in \partial\cF^-(a)$ be such that $\partial \cF(b)$ is contained in the interval $(a_1^-,a_2^-)$ for the counterclockwise orientation of $\partial P$. Call $a^+$ the only element in $\partial \cF^+(a) \cap (a_1^-,a_2^-)$ and let $b^-\in \partial\cF^-(b)$ and $b^+\in \partial \cF^+(b)$ be such that $(b^-, b^+)$ is the connected component of $\partial P \smallsetminus \partial \cF(b)$ that contains $a^+$.
Finally, let $U_1= (a_1^-,\eta_1)$, $V_1 = (b^-,\xi_1)$, $U_2 = (\eta_2, b^+)$ and $V_2 = (\xi_2, a_2^-)$  be any pairwise disjoint open intervals in $\partial P$. See Figure \ref{fig: partially linked a b}.

Then, for any sufficiently large $n$, the element $\alpha^n \beta^n$ acts freely, with an attractor in $U_1$ and a repeller in $U_2$
and $\alpha^n \beta^{-n}$ acts freely, with attractor in $V_1$ and repeller in $V_2$. 
\end{lem}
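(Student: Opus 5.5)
The plan is a ping-pong argument on $\partial P$, followed by an application of Lemma \ref{lem: connect fix sets} to rule out fixed points in $P$ and extra boundary fixed points.

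\emph{Boundary dynamics of $\alpha$ and $\beta$.} Since $a$ is a non-corner point fixed by $\alpha$, Proposition \ref{prop:boundary_action} says that the only fixed points of $\alpha$ on $\partial P$ are the four endpoints in $\partial\cF(a)$. Replacing $\alpha$ by a power (and possibly by $\alpha^{-1}$, which is harmless because we take large $n$ of either sign), we may assume $\alpha$ fixes each of these endpoints. By Axiom \ref{Axiom_A1}, $\alpha$ contracts one leaf through $a$ and expands the other. Hence the endpoints of one leaf are attracting and those of the other are repelling, and every compact subset of a complementary interval of $\partial\cF(a)$ is pushed towards its attracting endpoint. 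The same description holds for $\beta$ and $\partial\cF(b)$. I fix the orientations so that the stated conclusions come out. For example, $\alpha^n\beta^n$ should attract to $U_1$, which is a neighbourhood of $a_1^-$ on the side facing $\partial\cF(b)$, so $\alpha$ should attract along $\cF^-(a)$. Similarly $\beta$ should attract along $\cF^-(b)$, or along $\cF^+(b)$ in the variant $\alpha^n\beta^{-n}$ with the $V_i$. Checking that these choices are consistent with the figure, including the chosen side of the partial linking, is bookkeeping.

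\emph{Ping-pong.} Let $I_U$ be a closed interval of $\partial P$ containing $\overline{U_1}$ and avoiding the repelling points of $\alpha$ and of $\beta$. The configuration of partial linking, in which $\partial\cF(b)$ lies in $(a_1^-,a_2^-)$, is what makes this possible. For large $n$, $\beta^n$ maps the complement of a small neighbourhood of its repellers into a small neighbourhood of its attractors. Those attractors lie in $(a_1^-,a_2^-)$, away from the repellers of $\alpha$, so $\alpha^n$ then maps that neighbourhood into $U_1$. This gives $\alpha^n\beta^n(\partial P\smallsetminus U_2')\subset U_1$ for a suitable small neighbourhood $U_2'\subset U_2$. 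Symmetrically, $(\alpha^n\beta^n)^{-1}=\beta^{-n}\alpha^{-n}$ maps $\partial P\smallsetminus U_1$ into $U_2$. Iterating, every nonzero power of $g=\alpha^n\beta^n$ maps $\partial P\smallsetminus U_2$ into $U_1$, and the same holds with the roles of the two intervals exchanged for negative powers. It follows that every fixed point of $g^k$ for $k\ne 0$ lies in $U_1\cup U_2$. Moreover, $g^k$ maps the closed arc $\overline{U_1}$ into itself, so it has a fixed point there, and likewise in $U_2$. The argument for $\alpha^n\beta^{-n}$ with $V_1,V_2$ is identical.

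\emph{Freeness.} To apply Lemma \ref{lem: connect fix sets}, I still need a point $x\in P$ such that both $\partial\cF^+(x)$ and $\partial\cF^-(x)$ separate $U_1$ from $U_2$. I would take $x=\cF^+(a)\cap\cF^-(b)$, which is nonempty by hypothesis, possibly after shrinking the intervals; shrinking is allowed, since it only strengthens the conclusion. The endpoints of $\cF^+(a)$ and of $\cF^-(b)$ interleave appropriately with $a_1^-$, $b^+$ and the other marked points. Lemma \ref{lem: connect fix sets} then shows that $g$ acts freely with exactly two fixed points on $\partial P$, one in each interval. The ping-pong shows that the one in $U_1$ is attracting.

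The main obstacle is the orientation bookkeeping. One has to verify that the cyclic order of $a_1^-, a^+, b^-, b^+, a_2^-$ and of the intervals really places the attractors of the relevant powers of $\alpha$ and $\beta$ as required, and that the separating point $x$ exists for both products. In particular, for $\alpha^n\beta^{-n}$ it may be necessary to use a different intersection point, possibly on the leaf $\cF^-(a)$, or an adjusted choice, so I would check the figure case by case.
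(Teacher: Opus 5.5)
\textbf{There is a genuine gap, in the ping-pong step.} Your architecture matches the paper's. You localize the boundary fixed points of every nonzero power of $g=\alpha^n\beta^n$ in $U_1\cup U_2$, then apply Lemma~\ref{lem: connect fix sets} at $x=\cF^+(a)\cap\cF^-(b)$. This is the point $p$ of Figure~\ref{fig: partially linked a b} that the paper uses; its leaves also separate $V_1$ from $V_2$, so no other point is needed for $\alpha^n\beta^{-n}$.

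The problem is that the inclusions you state are false. With your normalization ($\alpha$ attracting towards $\partial\cF^-(a)$, $\beta$ towards $\partial\cF^-(b)$), $\beta$ has at least two attracting ends, and partial linking puts them in \emph{different} basins of $\alpha$. Since $\cF^+(a)$ crosses $\cF^-(b)$ and $a^+$ is the only end of $\cF^+(a)$ in $(a_1^-,a_2^-)$, the end $b^-$ lies in $(a_1^-,a^+)$, which $\alpha^n$ pushes to $a_1^-$. But another end of $\cF^-(b)$ lies in $(a^+,a_2^-)$, which $\alpha^n$ pushes to $a_2^-$. Being ``away from the repellers of $\alpha$'' only gives convergence to \emph{some} end of $\cF^-(a)$. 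Moreover, the repellers of $\beta$ are not in $U_2$. For instance, $b_2^+\in(a^+,a_2^-)$ is fixed by $\beta$, so $\alpha^n\beta^n(b_2^+)=\alpha^n(b_2^+)\to a_2^-$, which is not in $\overline{U_1}\subset[a_1^-,b^-]$. Hence $\alpha^n\beta^n(\partial P\smallsetminus U_2')\subset U_1$ fails for every $U_2'\subset U_2$. Symmetrically, $a_2^-$ is fixed by $\alpha^{-n}$, and since $a_2^-\in(b_2^+,b^-)$ we get $\beta^{-n}(a_2^-)\to b_2^+\notin\overline{U_2}$. So $g^{-1}(\partial P\smallsetminus U_1)\subset U_2$ fails as well, and your localization of the fixed points of all $g^k$ has no support.

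\textbf{The missing idea} is to replace ``complement of a neighbourhood of the repeller'' by two overlapping compact arcs, each lying in a single basin. The open arcs $(b_2^+,b^+)$ (the basin of $b^-$ for $\beta$) and $(a_1^-,a_2^-)$ (the basin of $a^+$ for $\alpha^{-1}$) each contain both endpoints of the other, hence cover $\partial P$. This is where partial linking is really used. The paper takes compact arcs $I,J$ with $(a_2^-,b^-)\subset I\subset(b_2^+,a^+)$, $(b^-,b_2^+)\subset J\subset(\eta_1,a_2^-)$, $I\cup J=\partial P$, $U_1\subset I\smallsetminus J$ and $U_2\subset J\smallsetminus I$.
\begin{itemize}
\item For large $n$, $\beta^n(I)$ is a small neighbourhood of $b^-\in(a_1^-,a^+)$, so $g(I)\subset U_1$.
\item Likewise $\alpha^{-n}(J)$ is a small neighbourhood of $a^+\in(b^-,b^+)$, so $g^{-1}(J)\subset U_2$.
\item Then $g^k(I)\subset U_1$ and $g^{-k}(J)\subset U_2$ for all $k\ge 1$. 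Any fixed point of $g^{\pm k}$ lies in $I$ or in $J$, hence in $U_1\cup U_2$.
\item Since $\overline{U_1}\subset I$ and $\overline{U_2}\subset J$, there is a fixed point in each. Lemma~\ref{lem: connect fix sets} then finishes as you planned.
\end{itemize}

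\textbf{The deferred bookkeeping for $\alpha^n\beta^{-n}$ does not come out as stated.} With $\alpha,\beta$ oriented as the first conclusion forces, run the same argument with $\beta^{-1}$, whose attracting ends $\partial\cF^+(b)$ all lie in $(a^+,a_2^-)$. This puts the attractor of $\alpha^n\beta^{-n}$ in $V_2$ and its repeller in $V_1$, the reverse of the statement. That corrected form is exactly how the lemma is used in the proof of Proposition~\ref{prop: free element condition} (``$V_2$ contains the attractor for $\alpha^m\beta^{-m}$'').
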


\begin{figure}[h]
\centering
\labellist
  \small
  \pinlabel {\color{blue} $a_2^-$} [tr] at 45 215
  \pinlabel {\color{blue} $a_1^-$} [tr] at 68 57
  \pinlabel {\color{red} $a^+$} [tl] at 215 58

  \pinlabel {\color{blue} $b^-$} [tr] at 173 32
  \pinlabel {\color{red} $b^+$} [bl] at 251 102
  \pinlabel {\color{red} $b_2^+$} [br] at 167 256
  
  \pinlabel {$a$} [tl] at 82 100
  \pinlabel {$b$} [tl] at 203 141
  \pinlabel {$p$} [br] at 168 88

  \pinlabel {\color{green} $V_1$} [bl] at 170 40
  \pinlabel {\color{green} $V_2$} [tl] at 56 213
  
  \pinlabel {\color{myorange} $U_1$} [bl] at 76 51
  \pinlabel {\color{myorange} $U_2$} [br] at 244 96
  
\endlabellist
\includegraphics[width=.5\textwidth]{figures/non_corner_PL}
    \caption{Partially linked non-corner fixed points $a$ and $b$}
    \label{fig: partially linked a b}
\end{figure}

In the figure, $a$ and $b$ may be singular or not, the important configuration specified by the technical statement in the lemma is that any additional rays of their leaves lie in the shaded blue regions.   

\begin{proof}
Let $b_2^+\in \partial \cF^+(b)$ be the point such that the interval $(b_2^+, b^-)$ is the connected component of $\partial P \smallsetminus \partial \cF(b)$ containing $a_1^-,a_2^-$.
Consider $I, J$ two compact intervals in $\partial P$ such that $(a_2^-,b^-)\subset I \subset (b_2^+,a^+)$, $(b^-,b_2^+) \subset J\subset (\eta_1^-,a_2^-)$ and $I\cup J = \partial P$, as in Figure \ref{fig: intervals I J}; note that $U_1 \subset I$ and is disjoint from $J$, and $U_2 \subset J$ and is disjoint from $I$.

\begin{figure}[h]
\centering
\labellist
  \small
  \pinlabel {\color{gray} $I$} [r] at 60 148
  \pinlabel {\color{gray} $J$} [bl] at 225 236

    \pinlabel {\color{gray} $I'$} [br] at 340 148
  \pinlabel {\color{gray} $J'$} [tl] at 520 236

    \pinlabel {\color{blue} $a_2^-$} [br] at 45 210
  \pinlabel {\color{blue} $a_1^-$} [tr] at 70 60
  \pinlabel {\color{red} $a^+$} [tl] at 215 65

  \pinlabel {\color{blue} $b^-$} [tl] at 165 32
  \pinlabel {\color{red} $b^+$} [tl] at 256 130
  \pinlabel {\color{red} $b_2^+$} [br] at 170 265

\endlabellist
    \includegraphics[width=0.75\linewidth]{figures/intervals_circle_infinity}
    \caption{Intervals $I$ and $J$}
    \label{fig: intervals I J}
\end{figure}

For $n$ sufficiently large, $\alpha^n \beta^n(I) \subset U_1$ and $(\alpha^n\beta^n)^{-1}(J) \subset U_2$, so $\alpha^n \beta^n$ must admit at least one fixed point in $U_1$ and one in $U_2$. Moreover, since $I \cup J = S^1$, this means that the only fixed points of $\alpha^n \beta^n$, and all its powers, lie in $U_1 \cup U_2$.  
By Lemma \ref{lem: connect fix sets}, since each leaf of $\cF^\pm(p)$ separates $U_1$ from $U_2$, the element $\alpha^n \beta^n$ acts freely. By the above description $U_1$ contains the attractor and $U_2$ contains the repeller.  

Considering intervals $I', J'$ containing different endpoints of leaves of $a, b$ as shown in Figure \ref{fig: intervals I J}, the same argument proves the claim about $\alpha^n \beta^{-n}$.
\end{proof}

We now prove Proposition \ref{prop: free element condition}

\begin{proof}[Proof of Proposition \ref{prop: free element condition}]
Let $a$, $b$ be noncorner points as in Lemma \ref{lem: non corner partially linked}, these exist by Proposition \ref{prop extremal smale}.  
Consider the saturation $S = \cF^+(\cF^-(a))$. It contains $\cF^+(a)$ and does not contain $\cF^+(b)$. Thus there exists a unique leaf $l$ in $\partial S$ that separates $\cF^+(a)$ from $\cF^+(b)$. 
There can be two distinct cases (see Figure \ref{fig: broken and prongs}):
\begin{enumerate}[label=(\arabic*)]
\item \label{item nonseparated case}  $l$ is part of a family of non-separated leaves $\{l_i\} \subset \partial S$ with $l_1$ intersecting $\cF^-(a)$ and $l=l_n$ intersecting $\cF^-(b)$, or 
\item \label{item prong case} $l$ is a singular leaf intersecting both $\cF^-(a)$ and $\cF^-(b)$.
\end{enumerate}

\begin{figure}[h]

    \labellist
  \small

\pinlabel{\color{red} $l_1$} [tl] at 44 142
\pinlabel{\color{red} $l_2$} [tl] at 70 194
\pinlabel{\color{red} $l_n$} [tr] at 134 187

\pinlabel{$a$} [bl] at 74 93
\pinlabel{$b$} [b] at 182 132
\pinlabel{$p$} [tl] at 360 163

\pinlabel{$a$} [bl] at 305 93
\pinlabel{$b$} [b] at 414 132

\endlabellist
\includegraphics[width=0.9\linewidth]{figures/broken_and_prong_axis}
\caption{Two configurations for $a$ and $b$}
    \label{fig: broken and prongs}
\end{figure}

Now we will consider an element $g_{m,k} = (\alpha^m \beta^{-m})^k(\alpha^m\beta^m)^k$ for some $m$ and $k$ large.
More precisely, in case \ref{item nonseparated case}, consider $V_2=(\xi_2, a_2^-)$ small enough so that it is separated from $b$ by $l_1$, and similarly choose $U_2= (\eta_2, b^+)$ small enough so that it is separated from $a$ by $l_n$. In case \ref{item prong case}, we take $V_2$ to be sufficiently close to $\cF^-(a)$ so that at least two rays of $\cF^-(p)$ lie between $V_2$ and $\cF^-(b)$, on the side of the face of $l$ containing these endpoints, and similarly choose $U_2$ so that it is between $\cF^+(b)$ and the face of $\cF^+(p)$ meeting $\cF^-(b)$.

By Lemma \ref{lem: non corner partially linked}, we can pick $m$ large enough so that $V_2$ contains the attractor for $\alpha^m \beta^{-m}$ and $U_2$ the repeller of $\alpha^m \beta^m$. Therefore, picking any $k$ large enough, we deduce that $g_{m,k} (\partial P \setminus U_2) \subset V_2$ and $g_{m,k}^{-1}(\partial P \setminus V_2) \subset U_2$. (See Figure \ref{fig: product gk}.)

\begin{figure}[h]
\centering
\labellist
  \small

  \pinlabel {\color{green} $V_2$} [tl] at 90 220
  \pinlabel {\color{myorange} $U_2$} [tl] at 292 111
  \pinlabel { $g_{m,k}$} [br] at 120 280
    \pinlabel { $g_{m,k}^{-1}$} [tr] at 225 175
\endlabellist

\includegraphics[width=0.5\linewidth]{figures/action_gk}
    \caption{Action of the element $g_{m,k}$}
    \label{fig: product gk}
\end{figure}

To simplify notation, we fix $m$ and $k$ large as above, and write $g=g_{m,k}$. Note that in both case \ref{item nonseparated case} and \ref{item prong case}, $U_2$ and $V_2$ where chosen small enough so that they are separated by a leaf of both $\cF^+$ and $\cF^-$. Since $\mathrm{Fix}(g) \cap \partial P$ contains points in $U_2$ and $V_2$ and nowhere else, Lemma \ref{lem: connect fix sets} implies that $g$ acts freely. %

We now describe the axis of $g$.  
Suppose first we are in case \ref{item nonseparated case}.  %
Recall that $l_1$ is a leaf intersecting $\cF^-(a)$ and $l_n$ is a leaf non-separated with $l_1$ intersecting $\cF^-(b)$

The dynamics of $g$ implies that $l_1$ separates $g (l_n)$ from $l_n$ and $l_n$ separates $g^{-1}(l_1)$ from $l_1$.  Thus, both $l_1$ and $l_n$ lie in the axis, which means that the axis cannot be an embedded copy of $\R$, hence it is a union of segments as described.  
The argument in case \ref{item prong case} is similar, and shows that two faces of the prong lie in the axis, and that this divides the axis.  

To conclude the proof, it remains to show that $\mathcal{A}(g)$ is not contained in the projection of a chain of lozenges.  We will show this is the case provided that $m, k$ were chosen sufficiently large.  

We use the following basic claim about chains.
\begin{claim} \label{claim: chain lozenges and prong or non separated leaves}
Let $\cC$ be a maximal chain of lozenges. If $p$ is a prong singularity and not a corner of $\cC$, then $\cC$ is contained in 1, 2, or 3 adjacent quadrants of $p$. 
Similarly, if $s_1, s_2, \ldots$ is a union of nonseparated leaves that are not sides of $\cC$, then $\cC$ intersects at most one $s_i$. 
\end{claim}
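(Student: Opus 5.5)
The plan is to use the connectedness of chains of lozenges together with the fact that lozenges are open product rectangles, so they cannot straddle a prong leaf or a nonseparated family.

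\emph{Prong case.} Let $p$ be a prong singularity that is not a corner of $\cC$. First I show that no open lozenge $L$ of $\cC$ meets $\cF^+(p)\cup\cF^-(p)$ in a way that crosses from one quadrant to another. Suppose $L$ met $\cF^+(p)$. Since $L$ is trivially bifoliated, the $\cF^-$-leaves through $L$ would then cross $\cF^+(p)$ in the product structure, forcing $p$ to lie in $L$ or on its boundary. A prong cannot lie in an open product-foliated rectangle. If $p$ were on a side of $L$, then $p$ would have to be a corner, since a side consists of regular rays ending at the corners. This contradicts the hypothesis.

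Hence each closed lozenge lies in the closure of a single quadrant of $p$. There is one exception: a side of $L$ may lie on a face of $\cF^\pm(p)$, in which case the lozenge still sits in one quadrant.

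Next, two lozenges sharing a corner $c \neq p$ lie in adjacent or equal quadrants. The reason is that the corner $c$ itself lies in the closure of a quadrant. The shared lozenges then lie in quadrants of $c$, which are contained in at most the quadrants of $p$ touching the closure of the quadrant containing $c$. If $c$ lies on a leaf $\cF^\pm(p)$, there are at most two such quadrants.

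The key step is a topological argument. If $\cC$ met two quadrants that are not within three consecutive ones, then by connectedness some chain path would have to pass "around" $p$. That is impossible: a chain of lozenges is bounded by leaves making perfect fits, and all of its lozenges lie on one side of any $\cF^\pm$-leaf that contains no corner. I would make this precise as follows. Take a face $f$ of $\cF^+(p)$ (or of $\cF^-(p)$) that contains no corner of $\cC$. By the crossing argument above, no lozenge crosses $f$, so the connected set $\cC$ lies in one component of $P\setminus f$.

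Intersecting these constraints over all faces of $\cF^\pm(p)$ that contain no corner leaves a set of quadrants. A face can contain corners only where $\cC$ touches it along a side, and the faces of $\cF^+(p)$ and $\cF^-(p)$ alternate around $p$. Together these facts should show that the surviving set is at most three consecutive quadrants. Controlling exactly which faces can carry corners (at most two consecutive ones, adjacent to a common quadrant) is the delicate bookkeeping.

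\emph{Nonseparated case.} Let $s_1, s_2, \dots$ be pairwise nonseparated leaves, none of which is a side of $\cC$. By the same reasoning as before, if a lozenge of $\cC$ met $s_i$, then $s_i$ would cross its interior. I then claim that $\cC$ lies in a single component of $P\setminus s_i$.

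Now suppose $\cC$ met both $s_i$ and $s_j$. Then $\cC$ would contain a connected path from $s_i$ to $s_j$ inside the chain, and this path projects to a path in $\Lambda^+$. Pick a leaf between them on the pseudo-interval. By Theorem \ref{thm_pseudo_interval_decomp}, $\llbracket s_i,s_j\rrbracket=[s_i,s_i]\cup[s_j,s_j]$, and every path from $s_i$ to $s_j$ in $\Lambda^+$ must pass through leaves accumulating on both. So the projection of $\cC$ would contain leaves limiting on $s_i$ and on $s_j$ from the side where they are nonseparated.

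Lozenges only meet each other at corners and sides, and the projection of a single closed lozenge to $\Lambda^+$ is an interval. Hence the chain would need a lozenge side on one of the $s_k$, or a lozenge containing the nonseparation "gap". The latter is impossible because in a product rectangle the $\cF^+$-leaves vary continuously in a Hausdorff interval. So some $s_k$ would be a side, a contradiction.

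I expect the main obstacle to be the prong case: carefully ruling out a chain wrapping through two non-adjacent quadrants via corners located on the prong's faces.
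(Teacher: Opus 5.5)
\textbf{The prong case fails at its first step, and the rest of that argument depends on it.} It is not true that a lozenge $L$ meeting $\cF^+(p)\cup\cF^-(p)$ must have $p$ in $\bar L$.

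Every $\cF^+$-leaf meeting $L$ crosses $L$ from one $\cF^-$-side to the other and continues outside $\bar L$; the same holds for $\cF^-$-leaves. So a ray of a prong leaf can cut straight through the interior of $L$ while $p$ lies far away. The product structure of $L$ only constrains the part of that leaf inside $L$. This is exactly the configuration the paper's proof has to handle: a chain crossing a ray $r$ of $p$.

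Consequently two of your claims are false: ``each closed lozenge lies in the closure of one quadrant'' and ``a face of $\cF^\pm(p)$ carrying no corner of $\cC$ is not crossed by $\cC$''. The face-counting you defer as delicate bookkeeping therefore has nothing to stand on, and it is not carried out in any case.

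The paper works with the crossing directly. If $\cC$ is not in one quadrant, it crosses some ray $r$ of $\cF^\pm(p)$; let $q$ be the point of $\cC\cap r$ closest to $p$. If $q$ is an accumulation point of sides, $\cC$ is a scalloped region and lies in two quadrants. Otherwise $q$ lies on a side of a lozenge $L$ with corner $c$. Minimality of $q$, together with the fact that the singular point $p$ cannot lie in a lozenge, shows that $\cC$ has no lozenge at $c$ on the $p$-side of that side. So $\cC$ lies in the quadrants of $c$ not containing $p$, and these meet at most three quadrants of $p$.

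\textbf{The nonseparated case has a different gap: the obstruction is not visible in $\Lambda^+$.} Your dichotomy ``some lozenge has a side on an $s_k$, or a single lozenge contains the gap'' does not follow from the facts you cite.

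Let $l_t$ be the leaves accumulating on $s_i$ and $s_j$. A lozenge $L$ crossed by $s_i$ projects to an interval containing $s_i$ and the $l_t$ near the limit. A lozenge $L'$ crossed by $s_j$ projects to an interval containing $s_j$ and the same $l_t$. So their projections already overlap. A priori $L$ and $L'$ could be joined in the chain through corners and further lozenges whose projections stay among the $l_t$, with no side on any $s_k$ and no lozenge containing both $s_i$ and $s_j$. Leaf-space bookkeeping alone cannot produce the contradiction.

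Separately, your claim that $\cC$ lies in one component of $P\setminus s_i$ contradicts your preceding sentence that $s_i$ crosses the interior of a lozenge of $\cC$.

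What rules the configuration out is planar. The paper takes the $\cF^-$-leaf $l_1$ that makes a perfect fit with $s_1$ and separates $s_1$ from $s_2$. It lets $q$ be the supremum of $\cC\cap s_1$ for the orientation of $s_1$ pointing toward that perfect fit. It then excludes two cases:
\begin{itemize}
\item $q$ infinite, which would force sides of $\cC$ onto $l_1$ and hence onto the $s_k$;
\item $q$ an accumulation point of sides, where $\cC$ is scalloped and $\cF^-(q)$ separates it from $s_2$.
\end{itemize}
Otherwise it uses the corner $c$ of the lozenge side through $q$ to show that $\cC$ has no lozenge in the quadrant of $c$ containing the perfect fit. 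This is what keeps $\cC$ from reaching $s_2$.
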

\begin{proof}
First we treat the prong case.  Suppose $\cC$ does not have $p$ as a corner, and is not contained in a single quadrant. Then some side of a lozenge intersects a ray $r$ of $p$.  Let $q$ be the point of $\cC \cap r$ closest to $p$. 
If $q$ is an accumulation point of leaves of sides of $\cC$, then $\cC$ is a scalloped region so contained in two quadrants.  Otherwise, $q$ is the on the side of a lozenge $L$, let $c$ denote the corner on this side.  Since $q$ is a closest point, and $p$ cannot be contained in a lozenge as it is singular, there is no adjacent lozenge to $L$ on the other side of $q$ with corner $c$.  Thus $\cC$ is contained in the union of the other quadrants of $c$ (those not containing $p$) and thus meets at most $3$ quadrants of $p$. 

For the second statement, consider $s_1, s_2$ two non separated leaves and suppose that $\cC$ intersects $s_1$. We will show that $\cC$ cannot intersect $s_2$. To fix notations, assume that $s_1,s_2\in \cF^+$. Let $l_1\in \cF^-$ be the leaf making a perfect fit with $s_1$ and separating $s_1$ from $s_2$. We choose an orientation of $s_1$ towards the perfect fit with $l_1$ and call $q$ the supremum of $\cC \cap s_1$ for that orientation. Note first that $q$ cannot be infinite as it would force $l_1$, and thus $s_1$ and $s_2$, to contain sides of $\cC$. If $q$ is an accumulation point, then, as above $\cC$ must define a scalloped region $U$ and $\cF^-(q)$ separates $U$ from $l_1$, and therefore from $s_2$. So the conclusion of the claim holds in that case. So we may assume that $q$ is on a side of a lozenge $L$ in $\cC$. Let $c$ be the corner of $L$ on the same side of $s_1$ as $l_1$. As $q$ is defined as the supremum of $\cC \cap s_1$, we get as above that there are no lozenges in the quadrant of $Q$ that contains the perfect fit between $l_1$ and $s_1$.  This proves the claim.

\begin{figure}[h]
    \centering
    \labellist
    \small
    \pinlabel{$p$} [r] at 100 101
    \pinlabel{$q$} [b] at 129 72
    \pinlabel{$c$} [b] at 116 57
    \pinlabel{\color{red} $r$} [tl] at 150 51
    \pinlabel{$\cC$} [tr] at 58 36

    \pinlabel{{\color{red}$s_3$}} [b] at 305 158
    \pinlabel{{\color{red}$s_2$}} [b] at 390 158
    \pinlabel{{\color{red}$s_1$}} [b] at 493 158
    
    \pinlabel{$\cC$} at 499 210
    \pinlabel{$q$} [br] at 446 162
    \pinlabel{$c$} [br] at 446 108
    
    \endlabellist
    
    \includegraphics[width=1\linewidth]{figures/chaine_prong}
    \caption{Chain of lozenge crossing a prong singularity (left) and nonseparated leaves (right)}
    \label{fig: nonsep to prong}
\end{figure}

\end{proof}

Returning to the proof of the proposition, considering first case \ref{item nonseparated case}, suppose that $\cC$ is some chain of lozenges that intersects both $l_1$ and $l_n$.  
By Claim \ref{claim: chain lozenges and prong or non separated leaves}, 
$l_1$ and $l_n$ contain sides of lozenges in $\cC$.  
Since $l_1, \ldots l_n$ are nonseparated, they are sides of lozenges in a chain, fixed by some nontrivial element of $G$ (this is Proposition \ref{prop:nonseparated_on_chain}).  Thus the maximal chain containing $\cC$ has a side on each $l_i$. 

In summary, for any element $g_{m,k}$ constructed as above with $l_1$ and $l_n$ in its axis, and any chain $\cC$ whose projection contains the axis of $g_{m,k}$, the maximal chain containing $\cC$ is the {\em unique} maximal chain (independent of $m, k$ and of $\cC$) containing the non-separated leaves as sides of lozenges.  Let $\cC'$ denote this maximal chain, and $h \in G$ the element fixing all corners or $\cC'$ (which exists by Proposition \ref{prop:nonseparated_on_chain}). 

Since $a$ and $b$ were chosen to be non-corners, the leaves $\cF^\pm(a)$ and $\cF^\pm(b)$ do not form sides of lozenges in $\cC$. Thus, these leaves are not fixed by $h$ (by Theorem \ref{thm_fixed_connected}), and hence the endpoints of these leaves are also not fixed by $h$. 

Since $\mathrm{Fix}(h) \cap \partial P$ is a closed set, there is some neighborhood of the endpoints of $\cF^+(a)$ in $\partial P$ which contains no fixed points of $h$, i.e., no accumulation point of ends of leaves of sides of lozenges in the maximal chain. Therefore, we could have chosen our initial interval $V_2$ small enough (and appropriately increase $m$ and $k$ to some $m', k'$), so that $V_2$ contains no fixed point of $h$. Thus, $g_{m',k'}$ cannot preserve the chain $\cC'$.  In particular, this implies that $\cA(g_{m',k'})$ (which is invariant under $g_{m',k'}$) is not contained in the projection of $\cC'$.  Since $\cC'$ is the unique maximal chain whose projection contains $l_1$, which is in $\cA(g_{m',k'})$, the conclusion follows. 

The case for a dividing prong in place of nonseparated leaves follows exactly as above, using Claim \ref{claim: chain lozenges and prong or non separated leaves} to show that there is a unique maximal chain whose saturation contains points on an axis of $\cA(g_{m',k'})$, for any $m', k'$ sufficiently large, and that chain cannot project to contain all of $\cA(g_{m',k'})$.\qedhere 
\end{proof}

Our next goal is to show that the elements of the form constructed in  Proposition \ref{prop: free element condition} are loxodromic in $X^+$:
\begin{prop} \label{prop_axis}
If $g$ is as in Proposition \ref{prop: free element condition} then $g$ acts loxodromically on $X^+$.  Furthermore, there exist examples of such elements whose axes have distinct endpoints on the Gromov boundary $\partial X^+$.  In particular, $X^+$ is not elementary.
\end{prop}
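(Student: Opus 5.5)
The plan is to show that $g$ moves a vertex of $X^+$ at linear speed by exhibiting, along the axis $\cA(g)$, a sequence of ``bottleneck'' leaves that any path in $X^+$ must pass near. First fix a leaf $l$ in the axis at which the axis is broken. In case \ref{it: non sep} take $l=y_0$ (nonseparated from $x_1$); in case \ref{it: prong} take the prong leaf $\cF^+(p)$ dividing some compact subinterval $[x,y]$ of the axis. Set $l_j = g^j(l)$; these leaves lie on $\cA(g)$ in order, and $l_j$ separates $l_{j-1}$ from $l_{j+1}$ in $\Lambda^+$. The key claim is a \emph{separation estimate}: there is some $N$ such that no leaf of $\cF^-$ meets both $l_{j}$ (or anything on the side of $l_j$) and $l_{j+N}$. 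Equivalently, $\dX(l_j,l_{j+N})\ge 2$, and more importantly the break points act as cut-points of $X^+$ up to bounded error.

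To prove the estimate, in the nonseparated case I would use that a leaf of $\cF^-$ crossing from one side of the broken point to the other must cross both nonseparated leaves $y_0$ and $x_1$, which is impossible since they are separated by a leaf of $\cF^-$ making a perfect fit. A leaf of $\cF^-$ can therefore only "jump over" $y_0$ by going around it in a way confined to a single component. In the prong case, Remark \ref{rmk: separating prong separates} already gives that leaves meeting the quadrant of $x$ are disjoint from those meeting $y$. In both cases I would deduce: if $v$ and $w$ are $\cF^+$-leaves on opposite sides of $l_j$ along the axis (and far enough from $l_j$, namely beyond $l_{j\pm1}$), then every path in $X^+$ from $v$ to $w$ has a realization whose projection to $\Lambda^+$ contains $\llbracket v,w\rrbracket \ni l_j$, and hence a vertex that meets a common $\cF^-$-leaf with a leaf in the small region around the break. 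Since no $\cF^-$ leaf passes between consecutive pieces, each of $l_{j}, l_{j+1},\dots$ forces a distinct vertex on any path. This yields $\dX(l_0,l_{n}) \ge c\,n$ for some $c>0$. Together with the trivial upper bound $\dX(l_0,l_n)\le n\,\dX(l_0,l_1)$, this shows that $g$ is loxodromic. I expect the main obstacle to be ruling out the possibility that paths use many distinct but ``parallel'' crossings through a single piece of the axis while skipping breakpoints. This is exactly where hypothesis \ref{it: not chain} enters: if consecutive break points could be bypassed indefinitely, the region swept out would be a $g$-invariant chain of lozenges (in the spirit of Proposition \ref{prop:nonseparated_on_chain} and Theorem \ref{thm_fixed_connected}) whose projection contains $\cA(g)$, a contradiction. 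I would first try to argue this directly via Claim \ref{claim: chain lozenges and prong or non separated leaves}.

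For the second part, I would produce two such elements with distinct fixed points on $\partial P$. Take $g$ as constructed in the proof of Proposition \ref{prop: free element condition}, with attracting and repelling points in $V_2$ and $U_2$. Then take $g' = f g f^{-1}$ for some $f\in G$, for instance a power of $\alpha$, chosen so that $f(U_2\cup V_2)$ is disjoint from $U_2\cup V_2$. The axes $\cA(g)$ and $\cA(g')$ then diverge in $\Lambda^+$: their ends eventually lie in complementary regions cut out by leaves with endpoints separating these intervals. The bottleneck estimate then shows that their quasi-axes in $X^+$ diverge linearly, so the endpoints in $\partial X^+$ are distinct. Two loxodromics with disjoint endpoint pairs show that $X^+$, and the action, is nonelementary.
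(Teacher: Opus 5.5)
Your loxodromicity argument is the paper's (Lemma \ref{lem: blocks are long}) and is fine. But the ``main obstacle'' you anticipate does not exist. The projection to $\Lambda^+$ of \emph{any} path in $X^+$ contains all of $\llbracket v,w\rrbracket$, so no path can skip a break. A single $\cF^-$-segment projects to a genuine interval, so each edge spans at most one block. Condition (2) of Proposition \ref{prop: free element condition} plays no role here. Lemma \ref{lem: blocks are long} uses only the break structure, and case (i) of Lemma \ref{lem: unique fixed point or two fixed points} explicitly contemplates loxodromic elements whose axis lies in an invariant chain. So your proposed implication (bypassing breaks forces an invariant chain) is neither needed nor justified. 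In the paper, (2) enters this proposition only through that lemma. Its role is to ensure that $g$ has exactly two fixed points $\xi^\pm\in\partial P$, neither an end of a leaf, which is what the second half needs. For the $g$ built in the proof of Proposition \ref{prop: free element condition}, this north--south dynamics can also be read off from $g(\partial P\setminus U_2)\subset V_2$.

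In the second half there is a wrong step and a missing one. The wrong step is the choice of $f$. With $\alpha$ fixing each endpoint of $\cF^\pm(a)$ (as the construction implicitly requires), no power of $\alpha$ moves $U_2\cup V_2$ off itself, because $a_2^-$ is an $\alpha$-fixed endpoint of the arc $V_2=(\xi_2,a_2^-)$. You only need $f\{\xi^+,\xi^-\}\cap\{\xi^+,\xi^-\}=\emptyset$, and $f=\alpha^k$ achieves this for $k$ large. Indeed, $\alpha^k\xi^\pm$ accumulate only on endpoints of $\cF^\pm(a)$, and none of these is $\xi^\pm$.

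The missing step, and the real gap, is the passage from distinct fixed points on $\partial P$ to distinct points of $\partial X^+$, which you only assert. A few leaves whose endpoints separate two arcs of $\partial P$ give no lower bound in $X^+$; leaves on opposite sides of a single leaf can even be adjacent. The separating family has to be the whole $g$-orbit of the break leaves ($y_0,x_1$, resp.\ the dividing prong leaf). Their endpoints accumulate on $\partial P$ only at $\xi^\pm$. Each end of $f\cA(g)$ converges to $f\xi^+$ or $f\xi^-$, neither of which is $\xi^\pm$. Hence each such end eventually lies in a single component of $P\setminus\bigcup_n g^n(y_0\cup x_1)$, bounded by break leaves of index $|n|\le N$. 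Every path from there to $g^{\pm(N+K)}(y_0)$ then crosses $K$ further breaks, and passes within distance $1$ of a break leaf independent of $K$ (pseudo-interval containment again). This bounds the Gromov products and separates the endpoints in $\partial X^+$.

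This is exactly the paper's argument. The only real difference is the second element. The paper builds $h$ afresh from another pair $a',b'$ of partially linked non-corner fixed points. Your conjugate $fgf^{-1}$ is cheaper, since it automatically inherits every conclusion of Proposition \ref{prop: free element condition}.
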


 The first step is to show that the axes furnished by Proposition \ref{prop: free element condition} correspond to bi-infinite paths in $X^+$.  After this, we will show that we can build axes with distinct endpoints on $\partial X^+$ to complete the proof.  
 
Going forward, we let $d_{X^+}$ denote distance in $X^+$, where all edges in $X^+$ are given length 1.
\begin{lem} \label{lem: blocks are long}
Suppose that $\llbracket x,y\rrbracket = \bigsqcup_{i=1}^n [x_i,y_i]$.  Then $d_{X^+}(x,y) \geq n-1$. 

Analogously, suppose $x, y$ lie on an embedded interval in $\Lambda^+$ and $[x,y]$ can be decomposed as $[x,x_1] \cup[x_1,x_2] \cup... [x_n,y]$ , where each $x_i$ is a dividing prong, dividing $x_{i-1}$ from $x_{i+1}$ (or in the case of the prong $x_1$, dividing $x$ from $x_2$, and similarly for the prong $x_n$). Then $d_{X^+}(x,y) \geq n$. 
\end{lem}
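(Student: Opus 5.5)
We argue via realizations. Let $\gamma = v_0, e_1, v_1, \ldots, e_k, v_k$ be a geodesic in $X^+$ from $x$ to $y$, with $k = d_{X^+}(x,y)$, and choose a realization $\gamma_P$ as in Definition \ref{def_projection_maps}. Then $\pi_{\Lambda^+}(\gamma)$ is a union of $k$ intervals, the $i$-th being the projection of a segment $t_i$ of a nonsingular $\cF^-$-leaf from $v_{i-1}$ to $v_i$. By Theorem \ref{thm_pseudo_interval_decomp}, $\pi_{\Lambda^+}(\gamma)$ contains every block $[x_j,y_j]$ of $\llbracket x,y\rrbracket$. The key observation is that a single $\cF^-$-segment $t_i$ projects to a genuine interval in $\Lambda^+$ (a Hausdorff, embedded arc). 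Such an interval cannot contain two nonseparated leaves, since they have no separating neighbourhoods and an arc meets no such pair. So the projection of each $t_i$ meets points of at most one of the blocks between a nonseparated pair $y_j, x_{j+1}$. More precisely, passing from block $j$ to block $j+1$ inside $\pi_{\Lambda^+}(\gamma)$ requires a change of edge.

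To make this quantitative, we will track along $\gamma$ the first index $i(j)$ such that the projection of $t_1 \cup \cdots \cup t_{i(j)}$ contains $x_j$. We aim to show $i(j+1) > i(j)$ for $j \ge 2$. Suppose instead that $t_i$ realizes both $y_j$ and $x_{j+1}$. Then one nonsingular $\cF^-$-leaf crosses two nonseparated $\cF^+$-leaves. This is impossible, because the set of $\cF^+$-leaves crossed by a nonsingular $\cF^-$-leaf is an open Hausdorff interval in $\Lambda^+$. Hence the $n$ blocks force at least $n-1$ distinct edges, giving $k \ge n-1$. The first and last blocks may be reached at the endpoints $x$ and $y$ themselves, which is why we get $n-1$ rather than $n$ (for example, $x$ and $y$ nonseparated gives $n=2$ and distance $\ge 1$).

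For the prong statement we use Remark \ref{rmk: separating prong separates}. If $x_i$ divides $x_{i-1}$ from $x_{i+1}$, then no $\cF^-$-leaf meets both the quadrant $Q_1$ containing $x_{i-1}$ and $x_{i+1}$. We induct along the geodesic. Every path from $x$ to $y$ in $\Lambda^+$ passes through each prong leaf $x_i$ (the interval is embedded), so each $\cF^-$-edge crosses at most one of the division zones. More precisely, we will show that the sets $A_i$ of $\cF^+$-leaves lying in $(x_{i-1},x_i]$ on the $Q_1$-side satisfy the following: a vertex of $\gamma$ in the region beyond $x_{i+1}$ is at distance $\ge 1$ more from $x$ than any vertex before $x_i$. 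The realization $\gamma_P$ must contain a segment crossing into each successive region, and a single nonsingular $\cF^-$ segment cannot cross the prong $x_i$ from quadrant $Q_1$ to a leaf meeting $x_{i+1}$'s side, because of the "skip two quadrants" condition in Definition \ref{def: separating prong}. Counting one forced new edge per prong gives $k \ge n$.

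The main obstacle is the prong case. We must make precise that a nonsingular $\cF^-$-leaf crossing the prong leaf $\cF^+(x_i)$ can only pass between adjacent quadrants. The dividing condition excluding $Q_{2n-1},\dots,Q_3$ is exactly what prevents an $\cF^-$-leaf through a quadrant adjacent to $Q_1$ from reaching a leaf intersecting $y$. We would carefully verify this quadrant bookkeeping, including the case where consecutive prongs' regions interact.
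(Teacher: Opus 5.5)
Your treatment of nonseparated blocks is the paper's argument. The nonsingular $\cF^-$-leaf of one edge projects to an honest interval of $\Lambda^+$, so it meets at most one block, and your first-hitting indices $i(j)$ make the count precise. (In fact $i(j+1)>i(j)$ already holds for $j=1$, which gives $d_{X^+}(x,y)\geq n$; nonseparated $x,y$ are at distance $2$, not merely $\geq 1$. This only strengthens the stated bound.)

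The prong half is not yet a proof. The ``quadrant bookkeeping'' you defer is its entire content, and the tools you actually invoke do not supply it.
\begin{itemize}
\item The form of Remark~\ref{rmk: separating prong separates} you quote (no $\cF^-$-leaf meets both $Q_1$ and $x_{i+1}$) does not apply to the edge that first reaches $x_i$, because that edge need not meet $Q_1$.
\item Used only through $d_{X^+}(x_{i-1},x_{i+1})\geq 2$, the Remark forces a new edge only every \emph{two} prongs. That is also all your statement comparing vertices beyond $x_{i+1}$ with vertices before $x_i$ would give.
\item ``Each edge crosses at most one division zone'' does not follow from the $x_i$ being cut points. Nothing in the dividing condition prevents one $\cF^-$-leaf from meeting both $x_i$ and $x_{i+1}$ when it reaches $x_i$ through a side branch of the prong.
\end{itemize}

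What is needed is a directional statement. Let $p_i$ be the singularity on $x_i$, and let $j(i)$ be the first index whose edge segment projects onto a set containing $x_i$.
\begin{itemize}
\item Since $x_i$ separates $x$ from $x_{i+1}$ in $\Lambda^+$, we have $j(i)\leq j(i+1)$.
\item The earlier segments form a connected set containing $x$ and avoiding $x_i$. So the vertex $v_{j(i)-1}$ lies in the component $C$ of $P\smallsetminus x_i$ containing $x_{i-1}$.
\item Number the quadrants of $p_i$ as in Definition~\ref{def: separating prong}, with $x_{i-1}\subset Q_1$. Then $C$ is $Q_1\cup Q_2$ or $Q_{2N}\cup Q_1$, plus an $\cF^-$-ray of $p_i$.
\item The leaf $f$ of edge $j(i)$ meets $C$. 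It lies in one component of $P\smallsetminus\cF^-(p_i)$, that is, in two quadrants adjacent across an $\cF^+$-ray of $p_i$, one of which lies in $C$.
\item Hence $f\subset Q_{2N-1}\cup Q_{2N}\cup Q_1\cup Q_2\cup Q_3\cup x_i$. Since $x_{i+1}\neq x_i$ meets none of these quadrants, $f\cap x_{i+1}=\emptyset$.
\end{itemize}
This is precisely where the exclusion of $Q_3$ and $Q_{2N-1}$ is used, for example when $f\subset Q_2\cup Q_3$. It follows that $j(i+1)>j(i)$, and so $k\geq j(n)\geq n$. Each step involves only the single prong $x_i$, so consecutive prongs never interact.
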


\begin{proof}
Let $\gamma$ be a geodesic in $X^+$ from $x$ to $y$. Then, $\pi_{\Lambda^+}(\gamma)$ contains $\llbracket x,y\rrbracket$.  Since no leaf of $\cF^-$ can intersect two distinct intervals $[x_i, y_i]$ in the decomposition, the $\cF^+$-saturation of any edge $e$ of $\gamma$ meets at most one segment $[x_i, y_i]$.  Thus, the number $n$ of segments gives a lower bound to the number of edges in $\gamma$, and thus $\gamma$ has at least $n+1$ vertices.  

Similarly, suppose now $\llbracket x,y\rrbracket = [x,y]$ can be decomposed as $[x,x_1] \cup [x_1,x_2] \cup... [x_n,y]$ where the $x_i$ are dividing prongs.
Remark \ref{rmk: separating prong separates} shows $d(x_i, x_{i+2})\geq2$, so each interval $[x_i,x_{i+1}]$ must contain at least one vertex of any  path from $x$ to $y$. 
\end{proof}

As a consequence we have: 
\begin{coro}
    
$X^+$ has infinite diameter.  Moreover, if  $g$ is the element constructed in the proof of Proposition \ref{prop: free element condition}, then $g$ acts loxodromically on $X^+$. 

\end{coro}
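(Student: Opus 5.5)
The plan is to feed the axis structure from Proposition \ref{prop: free element condition} into the lower bounds of Lemma \ref{lem: blocks are long}, and then to exhibit linear growth of $\dX(x, g^N x)$. The second claim of the corollary (that $g$ is loxodromic) immediately implies the first (infinite diameter), so I concentrate on loxodromicity. Fix a leaf $x \in \cA(g)$. Since $g$ acts freely on $P$, it fixes no leaf, so $x$ separates $g^{-1}(x)$ from $g(x)$ in $\Lambda^+$. The points $g^j(x)$ are therefore arranged monotonically along the axis, and for every $N$ the pseudo-interval $\llbracket x, g^N x\rrbracket$ is the segment of $\cA(g)$ between them. It contains the $g$-translates of $\llbracket x, gx\rrbracket$ concatenated in order.

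\textbf{Case \ref{it: non sep}.} The axis decomposes as $\bigcup_i [x_i,y_i]$ with $y_i$ nonseparated from $x_{i+1}$, and the decomposition is $g$-equivariant. There are infinitely many blocks in both directions, because a single block with finitely many translates cannot be $g$-invariant under a free action. Hence some power of $g$ shifts the indices by a fixed $c \geq 1$. Note that the decomposition of $\llbracket x, g^N x\rrbracket$ given by Theorem \ref{thm_pseudo_interval_decomp} is exactly the restriction of the axis decomposition, by uniqueness. It therefore has at least $cN/k$ blocks, where $k$ is chosen so that $g^k$ shifts indices by $c$. Lemma \ref{lem: blocks are long} then gives $\dX(x, g^N x) \geq cN/k - 1$.

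\textbf{Case \ref{it: prong}.} The axis is a line containing a prong leaf $l=\cF^+(p)$ dividing some compact subinterval $[u,v]$. The translates $g^{j}(l)$ are prongs dividing $[g^j u, g^j v]$. Choose a power $m$ so large that $g^m u$ lies beyond $v$ along the axis. Then within $[x, g^{Nm}x]$ we find about $N$ prongs $g^{jm}(l)$, with each one dividing the neighbouring ones. This holds because dividing is preserved under enlarging the interval on the far side: if $y$ avoids the five quadrants $Q_{2n-1},\ldots,Q_3$, so does any leaf separated from $p$ by $y$. The second part of Lemma \ref{lem: blocks are long} then gives $\dX(x,g^{Nm}x)\geq N-2$, again linear growth.

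It remains to get the upper bound, which is trivial: $\dX(x,g^Nx)\le N\,\dX(x,gx)$. The stable translation length is therefore positive, so $g$ is loxodromic; this also yields infinite diameter. I expect the main obstacle to be in the prong case: the intermediate prongs must genuinely divide consecutive translates (the quadrant conditions in Definition \ref{def: separating prong}), and the lemma must apply with the chosen decomposition points. I would handle this by choosing $m$ large so that successive divided intervals are disjoint, and then checking the quadrant condition monotonically along the embedded line.
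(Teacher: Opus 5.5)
Your proposal is correct and follows the paper's route: the paper gives no separate proof and derives the corollary directly from Lemma \ref{lem: blocks are long}, applied to the $g$-invariant block structure of $\cA(g)$ (nonseparated pairs in case (a), translates of a dividing prong in case (b)), which yields linear growth of $\dX(x,g^Nx)$ and hence positive translation length and infinite diameter. In the prong case you also need the near-side condition, namely that $g^{-m}(l)$ lies in the single quadrant $Q_1$ because $u$ separates it from $p$; this follows by the same separation argument as your far-side check.
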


Thus, to complete the proof of Proposition \ref{prop_axis}, we need only show that the action of $G$ is nonelementary, meaning that there exist two quasi-axes 
which are not finite distance apart.   We give one more short lemma which simplifies the argument (and also partially explains why we require axes to not be contained in the saturation of a chain of lozenges) and then finish the proof.

\begin{lem}\label{lem: unique fixed point or two fixed points}
Suppose $g$ acts loxodromically on $X^+$. Then the action on $\partial P$ is of one of two possible types:
\begin{enumerate}[label=(\roman*)]
    \item $g$ fixes a unique point $\xi\in \partial P$, $\xi$ is the end of (infinitely many) leaves of $\cF^\pm$, and the axis of $g$ is contained in the saturation of a $g$-invariant chain of lozenges;  
    \item $g$ fixes exactly two points $\xi^+,\xi^-\in \partial P$, neither of which are the ends of any leaves of $\cF^\pm$.
\end{enumerate}
\end{lem}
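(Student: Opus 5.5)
Since $g$ is loxodromic on $X^+$, it has unbounded orbits, so the first step is to show that $g$ acts freely on $P$. If some power $g^k$ fixed a point $x\in P$, then $g^k$ would fix the vertex $\cF^+(x)$ of $X^+$. Orbits of $g^k$ would then be bounded, and hence so would orbits of $g$, contradicting loxodromicity. The same argument shows that no power of $g$ fixes any leaf of $\cF^\pm$: a fixed leaf is a fixed vertex of $X^+$, or, for an $\cF^-$ leaf, a fixed vertex of $X$, and we pass between the two graphs using Proposition \ref{prop: X+ X- QI}. Proposition \ref{prop:boundary_action} (third item) then applies to $g$. It says that $g$ has one or two fixed points on $\partial P$, or exactly four fixed points together with an invariant scalloped region.

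Next I would rule out the scalloped case. A free element preserving a scalloped region $U$ translates along the lozenge chains making up $U$. The $\cF^+$-leaves meeting $U$ are pairwise at bounded distance in $X^+$: any two of them are both crossed by a common $\cF^-$-leaf of $U$, or differ by one step. Here I use that $U$ is a product region in both directions, so every $\cF^+$-leaf through $U$ meets every $\cF^-$-leaf of the ``other'' chain decomposition. Thus $g$ preserves a bounded subset of $X^+$ and is elliptic, a contradiction. The same reasoning applies to any $g$-invariant chain whose saturation has bounded diameter; this is where chains enter.

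Now the main dichotomy. In the one-fixed-point case, call the point $\xi$. The plan is to show that $\xi$ is an endpoint of leaves, and that the axis of $g$ in $\Lambda^+$ lies in the saturation of a $g$-invariant chain. Take a leaf $l\in\cF^+$ on the axis $\cA(g)$. Its endpoints and those of $g^n(l)$ converge to $\xi$ from both sides as $n\to\pm\infty$, since there is a single fixed point and the dynamics on $\partial P$ is north--south-like with the attracting and repelling points coinciding. I would argue that the leaves $g^n(l)$ are all ``nested around'' $\xi$, and that the limit leaves in $\partial$ of their saturation produce leaves with endpoint $\xi$. These form a $g$-invariant family of rays making consecutive perfect fits, by item \ref{item:same endpoint subset of Z} of Theorem \ref{thm_boundary}, and they are infinitely many since $g$ acts freely and permutes them without fixing one. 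Consecutive perfect fits along $\xi$ then assemble into lozenges forming a $g$-invariant chain whose saturation contains the axis.

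In the two-fixed-point case, call the points $\xi^\pm$. Suppose, say, that $\xi^+$ were the endpoint of a leaf. The finitely or countably many rays ending at $\xi^+$ are permuted by $g$ preserving their $\mathbb Z$-indexed order. If there are finitely many, a power of $g$ fixes a leaf, which is impossible. If there are infinitely many, consecutive perfect fits produce a $g$-invariant chain of lozenges accumulating only at $\xi^+$. I would then show that such a chain forces the attractor and repeller to coincide, contradicting $\xi^+\neq\xi^-$. I expect this step, together with the limiting argument in the one-point case, to be the main obstacle: both require careful control of perfect fits at a boundary point under a free element, and I would lean on the structure of maximal chains from Theorem \ref{thm_fixed_connected} and Proposition \ref{prop:nonseparated_on_chain}.
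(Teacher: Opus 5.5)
\textbf{There is a genuine gap: the two steps you defer are the entire content of the lemma.}

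Your preliminary steps are fine. These are: freeness of $g$; no power of $g$ fixing a leaf; the third item of Proposition \ref{prop:boundary_action}; and excluding the scalloped case because the leaves through a scalloped region have diameter $1$ in $X^+$. Starting from the boundary classification is a legitimate alternative to the paper's start, and it even gives you the fixed-point count.

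The problem is the two steps you defer. Moreover, the mechanism you sketch for the one-point case does not work: the translates $g^n(l)$ leave every compact set of $P$, so there are no limit leaves to extract. What you need is that $\xi$ is already an end of $l$ itself. This follows from the defining property of axis leaves, which you never use: $l$ separates $g(l)$ from $g^{-1}(l)$.
\begin{itemize}
\item Having a single fixed point, $g$ preserves the orientation of $\partial P$. Identify $\partial P\smallsetminus\{\xi\}$ with $\R$ so that $g(t)>t$.
\item If $\xi$ were not an end of $l$, then $g$ would move the largest end of $l$, and $g^{-1}$ the smallest, into the complementary arc of the ends of $l$ that contains $\xi$.
\item This would put $g(l)$ and $g^{-1}(l)$ in the same component of $P\smallsetminus l$, a contradiction.
\end{itemize}
So every axis leaf ends at $\xi$. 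This is the paper's dichotomy, according to whether $g(l)$ and $g^{-1}(l)$ share an endpoint. It is what produces the infinitely many rays at $\xi$ and places the axis among the sides of the chain.

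In the two-point case, producing a $g$-invariant chain of lozenges with ideal point $\xi^+$ is not contradictory in itself: it is exactly the picture of case (i). The missing ingredient is the fact the paper establishes first. For every leaf $m$, the translates $g^n(m)$ leave every compact set as $n\to\pm\infty$; otherwise they would accumulate on nonseparated leaves, so a power of $g$ would fix a leaf or $g$ would preserve a scalloped region. Hence, there being no infinite product regions, the ends of $g^n(m)$ converge to a single point of $\partial P$ in each direction.

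With this in hand, suppose $m$ ended at $\xi^+$.
\begin{itemize}
\item $g$ acts on the $\mathbb{Z}$-indexed rays at $\xi^+$ by a nontrivial translation, since finiteness or order reversal would make a power of $g$ fix a leaf. So every $g^n(m)$ ends at $\xi^+$, and both limits must be $\xi^+$.
\item In particular $g$ preserves orientation. Another end $p$ of $m$ either equals $\xi^-$, or lies in a $g$-invariant arc of $\partial P\smallsetminus\{\xi^+,\xi^-\}$ on which $g$ acts freely.
\item Either way, $g^n(p)$ equals or tends to $\xi^-$ in one direction, which is the desired contradiction.
\end{itemize}
Without this convergence statement, or an equivalent substitute, your plan for case (ii) does not close.
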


\begin{figure}[h]
    \centering
    \labellist
    \small
    \pinlabel{$\xi$} [br] at 73 264
    \pinlabel{$g$} [tl] at 147 142

    \pinlabel{$\xi^-$} [r] at 333 157
    \pinlabel{$\xi^+$} [l] at 604 154

    \pinlabel{$g$} [b] at 488 163
    
    \endlabellist
    \includegraphics[width=0.9\linewidth]{figures/two_loxodromics}
    \caption{Two types of loxodromic element of Lemma \ref{lem: unique fixed point or two fixed points}}
    \label{fig: loxo 1 2 fixed point}
\end{figure}

\begin{proof}
Since $g$ acts loxodromically on $X^+$, it acts freely on $P$ and therefore (by Axiom \ref{Axiom_A1}) has an axis $\cA(g)$ in $\Lambda^+$. Let $l\in \Lambda^+$. Note that $g^n(l)$ must escape every compact as $n\to \pm \infty$, as otherwise we would have that $g^n(l)$ converge to a union of non-separated leaves, and so either $g$ would be in the stabilizer of a scalloped region, or some power of $g$ would fix one of these limit leaves, and thus fix a point in $P$. In either case, this contradicts that $g$ is loxodromic on $X^+$. Thus the ends of $g^n(l)$ on $\partial P$ must converge to a single point $\xi^+$ as $n\to \infty$ and a point $\xi^-$ (possibly equal to $\xi^+$) as $n\to -\infty$. 

By definition of axis, $l$ separates $g(l)$ from $g^{-1}(l)$, so we have two possible cases: Either $g^{-1}(l)$ and $g(l)$ have a common endpoint $\xi$ on $\partial P$ or they do not. In the first case, since $l$ separates $g(l)$ from $g^{-1}(l)$, $\xi$ must also be an endpoint of $l$, and therefore $\xi$ must be the endpoint of $g^n(l)$, for all $n$. In particular, $\xi$ is fixed by $g$, and all the $g^n(l)$ are part of the same $g$-invariant chain of lozenges, so $\xi^+=\xi=\xi^-$ and we are in the first case of the lemma.

Otherwise, $l$ and $g^2(l)$ do not share endpoints, so the limits $\xi^\pm$ are in two distinct components of $\partial P \smallsetminus l$. Moreover, neither of $\xi^\pm$ can be the end of a leaf as we would have a contradiction from the first part (using the fact that $g$ is loxodromic for $X^+$ if and only if it is loxodromic on $X^-$).
\end{proof}

\begin{proof}[Proof of Proposition \ref{prop_axis}]
Given the above, it suffices to find $g, h$, as in Proposition \ref{prop: free element condition} whose quasi-axes have distinct endpoints on the Gromov boundary $\partial X^+$.  

Let $g$ be an element constructed using Proposition \ref{prop: free element condition}. 
For simplicity, we treat first the case where $g$ is in case \ref{it: non sep} of Proposition \ref{prop: free element condition}.  Let $\xi^+$ denote the attracting fixed point of $g$ on $\partial P$ and $\xi^-$ the repelling fixed point.  By Lemma \ref{lem: unique fixed point or two fixed points}, $\xi^+$ and $\xi^-$ are not endpoints of any leaves.  

To construct $h$, we proceed as in the proof of Proposition \ref{prop: free element condition}, taking $a'$ and $b'$ elements with partially linked noncorner fixed points.  By using large powers of $a'$ and $b'$ in the construction, we can ensure that the axis of $h$ in $P$ has endpoints as close as we like to ends of leaves of $a'$ and $b'$, and thus are disjoint from $\{ \xi^+ , \xi^-\}$. 
We claim that $h$ also acts on $X^+$ with quasi-axis having distinct endpoints from that of $g$.

To show this, let $l_1, l_n$ be nonseparated leaves in the axis of $g$, keeping the notation from the proof of Proposition \ref{prop: free element condition}. 
Consider all of their translates by powers of $g$.  Since the axis of $h$ does not share endpoints $\{ \xi^+ , \xi^-\}$, all the leaves in the axis of $h$ sufficiently far towards the attracting fixed point for $h$ are eventually in one connected component of $P \setminus \bigcup_{n \in \mathbb{Z}} g^n(l_1 \cup l_n)$.  (See Figure \ref{fig_axes_g_h}.)
Let $l_h$ a leaf of $\mathcal{A}(h)$ in this connected component.  Then 
if $N$ is sufficiently large, then $g^N(l_1)$ and $g^N(l_n)$ separate $l_h$ from $g^{N+1}(l_1)$ and $g^{N+1}(l_n)$, 
and similarly $g^{-N}(l_1)$ and $g^{-N}(l_n)$ separate $l_h$ from $g^{-N-1}(l_1)$ and $g^{-N-1}(l_n)$. 
Fix some such $N$ where this holds.  
Then, for any $K>0$, any realization in $P$ of a geodesic path from $l_h$ to $g^{N+K}(l_1)$ in $X^+$ must intersect all leaves $g^{N+i}(l_1)$ and $g^{N+i}(l_n)$, for $i<K$.  Since no leaf of $\cF^-$ can simultaneously meet $l_1$ and $l_n$, it follows the length of the geodesic path is at least $K-1$. 

A similar argument holds for leaves close to the repelling fixed point for $h$.  This shows that all ends of the axes of $g$ and $h$ are unbounded distance apart, and hence they have distinct endpoints on the Gromov boundary of $X^+$.

The argument when $g$ is in case (2), with dividing prongs, follows in exactly the same way using the fact that edge leaves of a trace cannot cross multiple dividing prongs.  
\end{proof}

\begin{figure}[h]
    \centering
\labellist
  \small
\pinlabel {$\xi^+$} [r] at 27 145
\pinlabel {$\xi^-$} [l] at 258 145
\pinlabel{\color{red} $l_1$} [tl] at 219 59
\pinlabel{\color{red} $l_n$} [tl] at 242 88

\pinlabel{\color{red} $g^N(l_1)$} [tr] at 51 81
\pinlabel{\color{red} $g^N(l_n)$} [tr] at 76 54

\pinlabel {attracting fixed point of $h$} [t] at 143 27
\endlabellist

    \includegraphics[width=.6\linewidth]{figures/axes_configuration}
    \caption{One connected component of $P \setminus \bigcup_{n \in \mathbb{Z}} g^n(l_1 \cup l_n)$ contains the attractor of $h$.}
    \label{fig_axes_g_h}
\end{figure}

\subsection{Classification of isometries: Proof of Theorem \ref{thm_no_parabolics}} \label{subsec: classification isometries}
Recall Theorem \ref{thm_no_parabolics} states that 
elements acting loxodromically on $X^+$ act freely on $P$, those acting elliptically either fix a point or preserve a scalloped region, and there are no parabolic elements.  We now give the proof.  
  
\begin{proof}[Proof of Theorem \ref{thm_no_parabolics}]
First, observe that by Axiom \ref{Axiom_A1}, an element $g \in G$ fixes a point of $P$ if and only if it fixes a vertex in $X^+$.  Thus, we need only analyze the situation of elements that act freely on $P$. 
If the axis of $g$ in $\Lambda^+$ is a pseudo-line, then by Lemma \ref{lem: blocks are long}, the asymptotic translation length of $g$ is positive.  
If the axes $\mathcal{A}^+(g)$ and $\mathcal{A}^-(g)$ in $\Lambda^+$ and $\Lambda^-$ are both homeomorphic to $\mathbb{R}$, then we can apply Theorem 3.5.2 from \cite{barthelmePseudoAnosovFlowsPlane2026}.  This theorem says that either $P$ is trivial (in which case $g$ acts elliptically), or $g$ preserves a scalloped region (whose interior has diameter $1$ in $X^+$, hence $g$ has a bounded orbit), or $g$ preserves a ``skew-like structure" in $P$, in the sense described below. 
Precisely, there is a $g$-invariant set 
\[\Omega := \{p \in P : p \in l^+ \cap l^- \text{ for some } l^\pm \in \mathcal{A}^\pm(g) \}\] 
and well-defined functions $s^\pm$ and $i^\pm$ from $\mathcal{A}^\pm(g)$ to $\mathcal{A}^\mp(g) \cong \R$, given by 
$s^\pm(l) = \sup \{ l' : l \cap l' \neq \emptyset \}$ and $i^\pm(l) = \inf \{ l' : l \cap l' \neq \emptyset \}$, which are {\em monotone} and {\em finite}. 

Assume we are in this third case and 
fix $v \in \cA^+(g)$.   Since $i^+$ and $s^+$ are monotone and $g$ translates along the axis, this means that there exists $k$ such that $i^+(g^k(v)) > s^+(v)$.  Thus, no leaf of $\Lambda^-$ intersects both $v$ and $g^k(v)$.  If $v = v_0, e_1, v_1, \ldots v_j = g^{kn}(v)$ is any path in $X^+$ from $v$ to $g^{kn}(v)$ for $n>0$,  then its projection to $\Lambda^+$ contains the interval along the axis, and the vertex $v_1$ must have $i^+(v_1) < s^+(v_0)$ as witnessed by the edge $e_1$.   By construction, $i^+$ and $s^+$ commute with the action of $g$.  Using this and monotonicity, we conclude that $j \geq n$, and thus the asymptotic translation length of $g$ is at least $1/k$.   

The only case not treated so far is when $\mathcal{A}^+(g)$ is homeomorphic to $\mathbb{R}$ but $\mathcal{A}^-(g)$ is a pseudo-line. But as $X^-$ and $X^+$ are quasi-isometric (Proposition \ref{prop: X+ X- QI}) $g$ acts loxodromically on one if and only if it acts loxodromically on the other, so the first part of the proof applies switching the $+$ and $-$ signs.
\end{proof}

\section{WPD elements and proof of Theorem \ref{thm_generic_plane_version}} \label{sec: WPD}
In this section we conclude the proof of Theorem \ref{thm_generic_plane_version}, showing that for any Anosov-like group acting on a non-skew, nontrivial plane, the induced action on $X^+$ has elements with the {\em weak proper discontinuity property}, also known as {\em WPD}. 

\begin{defi}[WPD]\label{def: WPD}
    Assume $G$ acts by isometries on a hyperbolic metric space $X$.  
    A loxodromic element $g$ is \emph{WPD} if there exists a point $p \in X$ such that for every $\epsilon > 0$, there exists $n \in \mathbb N$ with the property that the set
    \[
        \bigl\{\, h \in G \mid d(p,hp) < \epsilon \text{ and } d(g^n p, h g^n p) < \epsilon \,\bigr\} 
    \]
    is finite.
\end{defi}

To prove Theorem \ref{thm_generic_plane_version}, we will actually show the following: 
\begin{prop} \label{prop:WPD}
Any element satisfying the conditions of Proposition \ref{prop: free element condition} is WPD. 
\end{prop}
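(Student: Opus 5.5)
Fix $g$ as in Proposition \ref{prop: free element condition}, and take the basepoint $p$ of the WPD definition to be a leaf $l_1$ on the axis $\cA(g)$. In case \ref{it: non sep} choose $l_1$ to be one of the nonseparated leaves; in case \ref{it: prong} choose it to be the dividing prong leaf $\cF^+(p)$. Fix $\epsilon>0$. The plan is to show that for $n$ large, any $h$ with $d_{X^+}(l_1,hl_1)<\epsilon$ and $d_{X^+}(g^nl_1,hg^nl_1)<\epsilon$ must lie in a fixed finite set.

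\textbf{Step 1: $h$ nearly preserves a long piece of the axis.} By Lemma \ref{lem:geodesics_vs_pseudo_interval} and Lemma \ref{lem: blocks are long}, the pseudo-interval $\llbracket l_1, g^nl_1\rrbracket$ contains about $n$ consecutive ``blocks''. These are bounded by the translates $g^i(l_1)$, $g^i(l_n)$ (or by the translates of the prong). Each block coarsely costs one unit of $X^+$-distance. The image $h\llbracket l_1,g^nl_1\rrbracket = \llbracket hl_1, hg^nl_1\rrbracket$ has endpoints within $\epsilon$ of the endpoints of the original. Any path in $\Lambda^+$ between leaves at bounded $X^+$-distance crosses only boundedly many blocks. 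So, by the bottleneck argument of Proposition \ref{prop:quasi-tree}, the two pseudo-intervals agree except on $O(\epsilon)$ blocks at each end. Taking $n \gg \epsilon$, there is some index $i$ in the middle such that $h$ sends the special leaves $g^i l_1$ (and $g^i l_n$) to special leaves $g^{j}l_1$ of the axis with $|j-i|\le C(\epsilon)$. This uses that the set of special leaves on $\cA(g)$ is discrete and combinatorially recognisable: they are the nonseparated leaves, or the dividing prongs, along the pseudo-interval.

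\textbf{Step 2: reduction to a fixed-leaf stabilizer.} Composing with $g^{i-j}$, the element $g^{i-j}h$ fixes the leaf $g^i l_1$ and moreover preserves the whole configuration of the block structure near it. Conjugating by $g^{i}$, we obtain an element $h' = g^{-i}g^{i-j}hg^i$ that fixes $l_1$ and $l_n$ and also fixes several consecutive translates $g^{\pm1}l_1, g^{\pm 1}l_n$. There are only finitely many choices of $j-i$, and (after slicing the middle range finitely) finitely many relevant $i$ up to this normalisation. It therefore suffices to show that only finitely many $h'$ in $G$ can fix the leaves $l_1$, $l_n$, $g l_1$, $g^{-1}l_1$ simultaneously.

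\textbf{Step 3: such elements are trivial.} By Axiom \ref{Axiom_A1}, a nontrivial $h'$ fixing $l_1$ fixes a point on it. By Theorem \ref{thm_fixed_connected}, all the points fixed by $h'$ lie on a single chain of lozenges. Since $h'$ fixes $l_1$ and $g^{\pm1}l_1$, a leaf fixed by $h'$ must meet each of them, or the fixed points must be corners of one chain whose projection contains these leaves. Iterating over the fixed leaves, the fixed chain would have to project onto a segment of $\cA(g)$ of length at least one fundamental domain. The idea is then to arrange, as in the proof of Proposition \ref{prop: free element condition}, that this forces the chain to be $g$-related. Condition \ref{it: not chain} together with Claim \ref{claim: chain lozenges and prong or non separated leaves} (a chain meets at most one of the nonseparated leaves unless they are sides, and at most three quadrants of a non-corner prong) rules this out, so $h'=\mathrm{id}$.

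I expect the main obstacle to be this last step, in particular the passage from ``fixes several special leaves'' to ``the fixed chain contains a fundamental domain of the axis''. The case where $l_1$, $l_n$ are themselves sides of the chain $\cC'$ fixed by some element needs care: there $h'$ may be a power of the element fixing $\cC'$. To handle it, I would require $h'$ to also fix $g l_1$, which is not a side of $\cC'$ by the choice of $V_2$ in the construction. Step 1's combinatorial rigidity also needs checking, but it should follow from Lemma \ref{lem: blocks are long}.
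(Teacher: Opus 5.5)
Your outline follows the same route as the paper: large overlap of $\llbracket l_1, g^n l_1\rrbracket$ with its $h$-image, canonicity of the block decomposition, an element fixing two far-apart blocks, and a contradiction with condition \ref{it: not chain} via chains of lozenges. However, the normalization in Steps 1--2 fails as stated. Canonicity of blocks (Lemma \ref{lem: sub broken axis}) only gives that $h$ carries block endpoints in the middle of $\llbracket l_1, g^n l_1\rrbracket$ to block endpoints with a bounded shift $s$, i.e.\ $h(y_k)=y_{k+s}$. You are implicitly assuming the nonseparated leaves (or dividing prongs) on $\cA(g)$ are exactly the $g$-translates of $l_1,l_n$. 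In general a fundamental domain of $g$ contains some number $t\geq 1$ of blocks, and nothing forces $t \mid s$. So $h(g^i l_1)$ need not be of the form $g^j l_1$, and no power of $g$ moves $h$ into the stabilizer of $l_1$.

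This really happens. If $g$ satisfies the conditions, so does $g^2$. For the WPD property of $g^2$ with $\epsilon$ larger than $d_{X^+}(l_1,gl_1)$, the element $h=g$ lies in every set under consideration. But it shifts blocks by half a fundamental domain of $g^2$, so your Step 2 would wrongly exclude it. The fix is the paper's pigeonhole. The shift $s$ takes only boundedly many values, depending on $\epsilon$. If $h_1,h_2$ have the same shift, then $h_2^{-1}h_1$ maps each middle block to itself, hence fixes block-endpoint leaves more than a fundamental domain apart, so it is trivial. Thus each shift class has at most one element. Equivalently, normalize by a fixed representative of each shift class rather than by a power of $g$.

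Your Step 3 is exactly Lemma \ref{lem cant fix two far blocks}. You do not need the specific choice of $V_2$, and you cannot use it, since the proposition is about any element satisfying conditions (1)--(2). The mechanism you are missing is the following. Suppose a nontrivial $f$ fixes leaves in blocks $B_i$ and $B_{i+t+k}$ with $k\geq 0$. Its fixed points are corners of one chain $\cC$. By Claim \ref{claim: chain lozenges and prong or non separated leaves}, every block endpoint between them carries a side of $\cC$, in particular $y_i$ and $g(y_i)=y_{i+t}$. Then $g\cC$ also has a side on $g(y_i)$, so $\cC$ and $g\cC$ lie in the same maximal chain. Iterating, this maximal chain is $g$-invariant with sides on every $x_k,y_k$, so its projection contains all of $\cA(g)$, contradicting condition \ref{it: not chain}. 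This also settles your worry about $\cC'$: fixing $l_1$ and $g l_1$ is already enough.
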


In order to do this, we first formalize and make more precise the kind of decomposition of pseudo-intervals that was used in the proof of Lemma \ref{lem: blocks are long}. This allows us to treat the presence of nonseparated leaves and of dividing prongs along an axis in a unified way.  
We then prove a series of lemmas giving control on the displacement of long sub-segments of an axis.  
Many of theses statements are not specific to axes, but apply to more general subsets of leaf spaces.  For this we make the following definition:

\begin{defi}
    A subset $\mathcal{A} \subset \Lambda^+$ is a {\em pseudo-line} if, for any $x, y$ in $\mathcal{A}$, the complement of $\llbracket x, y \rrbracket$ in $\mathcal{A}$ has two connected components, each of which leaves all compact sets in $\Lambda^+$.
\end{defi}
A pseudo-line may be a properly embedded copy of $\mathbb{R}$, in which case we call it an embedded line, or a bi-infinite union of intervals $[x_i, y_i]$ with $x_i$ nonseparated with $x_{i+1}$, or the union of a (finite or infinite) number of such intervals with (two, or one) properly embedded infinite rays.  

\begin{lem}[Discreteness of dividing prongs] \label{lem_discrete_dividing}
 If $\cA$ is an embedded line in $\Lambda^+$ 
 then there are finitely many dividing prongs in any compact subset of $\cA$.
\end{lem}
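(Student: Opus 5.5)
The plan is to argue by contradiction using compactness of a segment of $\cA$ together with the local structure near an accumulation point. Suppose a compact subinterval $[x,y] \subset \cA$ contains infinitely many dividing prongs $p_1, p_2, \ldots$. Since $[x,y]$ is homeomorphic to a compact interval in $\Lambda^+$, after passing to a subsequence we may assume the leaves $\cF^+(p_n)$ converge monotonically in $[x,y]$ to a leaf $z \in [x,y]$. Because $\cA$ is an embedded line, the leaves of $[x,y]$ near $z$ are pairwise separated by one another. We may therefore choose a small transverse segment $\tau \subset \cF^-$ through a point of $z$ whose $\cF^+$-saturation covers a neighborhood of $z$ in $[x,y]$. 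Concretely, $\tau$ projects homeomorphically onto an open interval $(z', z'') \ni z$ of $\cA$.

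Next I would show that a dividing prong cannot lie in such a neighborhood, except possibly for finitely many. Take $p_n$ and $p_m$ with $n \neq m$ both in $(z',z'')$, with $\cF^+(p_n)$ lying strictly between two other points $u, w$ of $(z', z'')$. The single $\cF^-$-leaf containing $\tau$ meets $u$, $\cF^+(p_n)$ and $w$. By Definition \ref{def: separating prong}, dividing requires that the leaves on either side of $\cF^+(p_n)$ along $\cA$ eventually lie in quadrants of $p_n$ that are far apart. Remark \ref{rmk: separating prong separates} then implies that leaves in the quadrant $Q_1$ containing the earlier side and leaves meeting the later side cannot be met by a common $\cF^-$-leaf. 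This gives the key inequality: if $p_n$ divides $[a,b]$ with $a, b$ in the $\cF^+$-saturation of $\tau$, we contradict the fact that $\tau$ meets both $a$ and $b$.

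The main obstacle is that the definition of "divides a properly embedded $\R$" only asks that $p_n$ divide \emph{some} compact subinterval $[a_n,b_n]$, and these intervals might be much longer than the small neighborhood of $z$. I would handle this by showing that dividing is monotone under enlarging: if $p$ divides $[a,b]$, it divides any larger $[a',b'] \supset [a,b]$ on $\cA$. The reason is that the leaves farther from $\cF^+(p)$ along the embedded line stay in the same quadrant side, or lie even farther, so the quadrant condition persists. With monotonicity established, the key step is the local converse: once $[a,b]$ is short enough near $\cF^+(p_n)$, the interval $[a,b]$ lies in the saturation of $\tau$ and so cannot be divided. I would derive this by noting that, for leaves $a, b$ close to $\cF^+(p_n)$ on the two sides along $\cA$, the path in $\cA$ through $\cF^+(p_n)$ enters and exits through specific faces. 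The leaf of $\tau$ crosses $\cF^+(p_n)$ through a single face, entering from quadrant $Q_i$ and leaving into $Q_{i\pm1}$ or an adjacent one, which contradicts the "at least two quadrants apart" requirement. The delicate part is making this face-bookkeeping precise near the accumulation leaf $z$.

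Finally, since all but finitely many $p_n$ lie within the neighborhood $(z',z'')$, only finitely many can be dividing. This contradicts the assumption of infinitely many dividing prongs in $[x,y]$ and proves Lemma \ref{lem_discrete_dividing}.
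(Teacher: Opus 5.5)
Your proposal has a genuine gap at the final step. The two facts you establish are correct but point in opposite directions. First, if a single $\cF^-$-leaf, such as the one containing $\tau$, meets both $a$ and $b$, then no prong divides $[a,b]$. Second, dividing is monotone under enlarging. Now, $p_n$ is dividing as soon as it divides \emph{some} compact subinterval of $\cA$, and by your own monotonicity it then divides every larger one. To show $p_n$ is not dividing you therefore have to control \emph{arbitrarily long} intervals: you need to know which quadrants of $p_n$ contain the far parts of $\cA$ on both sides of $\cF^+(p_n)$, and these lie outside the $\cF^+$-saturation of $\tau$.

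Showing that $p_n$ fails to divide short intervals inside that saturation carries no information. \emph{Every} prong fails to divide all sufficiently short intervals around its leaf: leaves of $\cA$ close to a face of $\cF^+(p)$ pass near $p$ and cross the $\cF^-$-ray lying between the two rays of that face, so they meet two quadrants and neither endpoint can sit in a single quadrant. Hence ``since all but finitely many $p_n$ lie within $(z',z'')$, only finitely many can be dividing'' is a non sequitur. Run on a single $3$-prong, with $\tau$ through a point of the ray shared by the two faces that $\cA$ uses, your reasoning would ``prove'' that $p$ never divides $\cA$. Yet the definition explicitly allows $x\subset Q_1$ and $y\subset Q_4$.

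The missing ingredient is global, and your argument never uses it: the singular points form a discrete subset of $P$, and you never track where the $p_n$ sit in $P$. The paper's proof uses this in the following steps.
\begin{enumerate}
\item By discreteness, the accumulating prongs $p_n$ escape every compact set.
\item So only one ray $r_n$ of the relevant face of $\cF^+(p_n)$ converges to the limit leaf $l$; a second converging ray would force the $\cF^-$-ray between them to converge as well, contradicting transversality.
\item Consequently, for large $k$, a ray of $\cF^-(p_k)$ crosses $l$, and hence crosses the rays $r_n$ of all later prong leaves.
\item This configuration forces the two ends of $\cA$ near a later $\cF^+(p_n)$ to be separated by only a single quadrant of $p_n$, so $p_n$ cannot be dividing.
\end{enumerate}
A purely local analysis in a product chart around the limit leaf $z$, which is all your argument uses, cannot see the far ends of $\cA$. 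It therefore cannot decide whether $p_n$ is dividing.
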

\begin{proof}
Suppose for contradiction that dividing prongs $p_i$ along $\mathcal{A}$ accumulate on some leaf $l$, passing to a subsequence, we can assume that $\cF^+(p_i)$ converges to $l$.  This means that there is a face $f_i$ of $\cF^+(p_i)$ so that $f_i$ converges to $l$ in $P$ uniformly on compact sets. 

Since the set of singularities is discrete in the plane, the points $p_i$ escape all compact sets, thus there is in fact a ray $r_i$ based at $p_i$ of $f_i$ so that the $r_i$ converge to $l$ (uniformly on compact sets) in $P$.  Note that there cannot be any other ray $r'_i$ of $p_i$ which converges to $l$, since this would force the $\cF^-$ ray(s) between $r_i$ and $r_i'$ to converge as well, contradicting transversality of the foliations.  Thus, for all sufficiently large $i$, some (necessarily unique) ray of $\cF^-(p_i)$ crosses $l$, and thus also crosses the rays $r_n$ for $n>i$.  This configuration means that such $p_n$ cannot be dividing prongs, as the ends of $\mathcal{A}$ are separated only by a single quadrant of $p_n$.  
See Figure \ref{fig: prong discrete} where the single quadrant is shaded.
\end{proof}

\begin{figure}[h]
    \centering
\labellist
  \small
  \pinlabel{$p_k$} [tr] at 137 160
  \pinlabel{$p_n$} [tr] at 196 140
  \pinlabel{$p_{n+1}$} [tr] at 243 92
 \pinlabel{\color{red} $l$} [tl] at 266 63
\pinlabel{\color{red} $\alpha$} [l] at 64 179
\pinlabel{\color{red} $\alpha'$} [r] at 297 157

\endlabellist
    
    \includegraphics[width=0.5\linewidth]{figures/discreteness_dividing_prong}
    \caption{$p_k$ divides $\alpha$ and $\alpha'$ on $\cA$, but $p_n$ accumulating on $l$ eventually cannot.}
    \label{fig: prong discrete}
\end{figure}

Lemma \ref{lem_discrete_dividing} allows us to imitate the canonical decomposition $\cA = \sqcup [x_i, y_i]$ where the $[x_i,y_i]$ are maximal intervals (in the case where there are nonseparated leaves, so $x_i$ nonseparated with $y_{i-1}$) to obtain a similar canonical decomposition when $\cA \cong \mathbb{R}$ but contains dividing prongs. 

\begin{defi}[Block decomposition]
    Let $\cA$ be the axis of an element $g$.  
The {\em block decomposition of $\cA$} is given by $\cA = \bigcup_{i \in \mathbb{Z}} [x_i, y_i]$, where these intervals have pairwise disjoint interiors and 
either: 
\begin{itemize}
    \item for all $i$, $[x_i,y_i]$ is an interval and $x_i$ is nonseparated with $y_{i-1}$, or 
    \item $\cA \cong \mathbb{R}$, and for all $i$, $x_i$ is a dividing prong, $x_i = y_{i-1}$,  each dividing prong appears as one of the $x_i$ in the decomposition.   
\end{itemize}
\end{defi}
Observe that the block decomposition of the axis of $g$ is automatically $g$-invariant. We will typically denote $[x_i, y_i]$ by $B_i$.  When $\cA \cong \mathbb{R}$ and has no dividing prongs, this decomposition is empty, but for all of the elements that we will work with, the decomposition is always nonempty and bi-infinite.

To motivate the next lemma, recall that if a pseudo-interval $\llbracket x, y \rrbracket$ has decomposition $\llbracket x, y \rrbracket = \sqcup [x_i, y_i]$, and 
$\llbracket x', y' \rrbracket \subset \llbracket x, y \rrbracket$ is a sub-pseudo interval, then the intervals in the decomposition of the pseudo-interval $\llbracket x', y' \rrbracket$ are of the form 
$[x', y_j], [x_{j+1}, y_{j+1}] \ldots [x_k, y']$ for some $j \leq k$.   The next lemma says that the same is true of blocks, up to some ambiguity at the initial and terminal intervals in the case of dividing prongs.  
\begin{lem}[Block decomposition of sub-pseudo-intervals] \label{lem: sub broken axis}
Suppose $\llbracket x', y' \rrbracket$ is a subinterval of an axis $\cA$ as above, with $x' \in B_i$ and $y' \in B_{n}$, where $n\geq i+2$.  Then 
$\llbracket x', y' \rrbracket$ has a canonical block decomposition into intervals
\[ [x', y_j] \cup [x_{j+1}, y_{j+1}] \cup  \ldots \cup [x_k, y'] \] where $|j-i|\leq 1$ and $|n-k|\leq 1$, 
with the property that either 
\begin{itemize}
    \item for all $i$, $x_i$ is nonseparated with $y_{i-1}$, or 
    \item for all $i$, $x_i$ is a dividing prong for $[x',y']$, $x_i = y_{i-1}$, and each prong dividing $x'$ and $y'$ appears as one of the $x_i$ in the decomposition. 
\end{itemize}
\end{lem}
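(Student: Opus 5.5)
We treat the two types of block decomposition separately, since the definition of blocks differs in each case.

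\emph{Nonseparated case.} Suppose $\cA=\bigcup B_i$ with $x_m$ nonseparated from $y_{m-1}$. By Theorem \ref{thm_pseudo_interval_decomp}, $\llbracket x',y'\rrbracket$ has a unique minimal decomposition into intervals whose consecutive endpoints are nonseparated. Since $\cA$ is a pseudo-line, $\llbracket x',y'\rrbracket$ is the sub-pseudo-interval of $\cA$ between $x'$ and $y'$, namely
\[
[x',y_i]\cup B_{i+1}\cup\cdots\cup B_{n-1}\cup[x_n,y'].
\]
This union satisfies the defining properties of Theorem \ref{thm_pseudo_interval_decomp}: each piece is an interval, consecutive endpoints are nonseparated, and the pieces are disjoint. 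By uniqueness it is the canonical decomposition, with $j=i$ and $k=n$.

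\emph{Dividing prong case.} Here $\cA\cong\mathbb R$ and each $x_m=y_{m-1}$ is a prong dividing $x_{m-1}$ from $x_{m+1}$, so $\llbracket x',y'\rrbracket=[x',y']$ is an embedded interval. The candidate decomposition cuts $[x',y']$ at every prong that divides $[x',y']$, in the sense of Definition \ref{def: separating prong}. By Lemma \ref{lem_discrete_dividing} there are only finitely many such prongs in the compact interval $[x',y']$. The task is to compare this set with $\{x_m\}$ and show the two agree except possibly near the ends.

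\emph{Interior prongs divide $[x',y']$.} Let $m$ satisfy $i+2\le m\le n-1$. We claim $x_m$ divides $[x',y']$. Since $x'\le y_{i+1}=x_{i+2}\le x_{m-1}$, the leaf $x'$ lies on the far side of $x_{m-1}$ from $x_m$. We need $x'$ to lie in the same single quadrant $Q_1$ of $x_m$ as $x_{m-1}$. A leaf of $\cF^+$ lying beyond $x_{m-1}$ along the axis is separated from $x_m$ by $x_{m-1}$, so it sits in the same complementary component of $x_m$, hence in the same face. Staying in a single quadrant is subtler, because a face contains two quadrants. Here we use Remark \ref{rmk: separating prong separates}: the $\cF^-$-leaves through $x'$ do not reach $x_m$ beyond what the leaves through $x_{m-1}$ do. The same argument applies to $y'$ and the quadrants $Q_{2n-1},\dots,Q_3$. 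The prongs $x_{i+1}$ and $x_n$ are the ambiguous ones: $x'$ may sit in $B_i$ too close to $x_{i+1}$ for it to divide. Likewise, $x_i$ may be irrelevant or missing. This ambiguity produces $|j-i|\le1$ and $|n-k|\le1$.

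\emph{No new prongs divide $[x',y']$.} Conversely, suppose a prong $p$ divides $[x',y']$ and lies in the interior of some $B_m$ with $i<m<n$. Then $p$ divides $x_m$ from $y_m$, because the quadrant condition only becomes weaker for leaves closer to $p$. This would make $p$ a dividing prong of $\cA$ that is not among the $x_m$, contradicting the definition of the block decomposition. Prongs dividing $[x',y']$ in $B_i$ or $B_n$ are handled by the same ambiguity bound. The resulting decomposition is canonical since it is defined intrinsically from $[x',y']$.

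\emph{Main obstacle.} The hard step is the monotonicity of the quadrant condition: if $p$ divides $[u,v]$ and $[u',v']\supseteq[u,v]$ along an embedded line, then $p$ divides $[u',v']$, and conversely for shrinking. I would prove this by showing that along an embedded line the leaves on one side of $p$ move monotonically away from $p$ within a fixed face. Their $\cF^-$-saturations are then nested, so the set of quadrants of $p$ they meet is monotone.
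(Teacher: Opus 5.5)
Your route is the paper's. In the nonseparated case the sub-pseudo-interval inherits the blocks $[x',y_i],B_{i+1},\dots,B_{n-1},[x_n,y']$ by uniqueness of the pseudo-interval decomposition. In the prong case, $x'$ lies beyond $x_{m-1}$ and so sits in the same single quadrant of $x_m$; hence every $x_m$ with $i+2\le m\le n-1$ still divides $[x',y']$, and only the end prongs are ambiguous. This is exactly the paper's remark that $x'$ lies in a single quadrant of $y_{i+1}$ ``by construction of dividing prongs''. Both arguments take as input that $x_{m-1}$ lies in a single quadrant of $x_m$. You are more careful than the paper in also tracking the ambiguity at the $y'$ end, which is what $|n-k|\le 1$ accounts for.

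One step fails as written. In ``No new prongs'' you infer that a prong $p$ dividing $[x',y']$ and interior to $B_m$ must divide $[x_m,y_m]$, because ``the quadrant condition only becomes weaker for leaves closer to $p$''. This is backwards. Suppose $v$ lies between $p$ and $u$ along the line, and $u$ meets the $\cF^-$-ray of $p$ in the component of $P\setminus\cF^+(p)$ containing both. Then $v$ meets that ray too, since the segment of the ray from $p$ to $u$ must cross $v$. So the dividing property passes to \emph{larger} intervals but can be lost on smaller ones. That is precisely the end-ambiguity you correctly identified; likewise, ``conversely for shrinking'' is false unless you only mean the contrapositive.

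The step is also unnecessary. A prong dividing $[x',y']$ divides a compact subinterval of $\cA$, so by definition it is a dividing prong of $\cA$. It is therefore one of the $x_m$, necessarily among $x_{i+1},\dots,x_n$.

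Two smaller fixes:
\begin{itemize}
\item Justify monotonicity by nested complementary regions, not nested $\cF^-$-saturations, which need not be nested. If $v$ separates $u$ from $\cF^+(p)$, then $u$ lies in a component of $P\setminus v$ not containing $p$, and such a component meets only quadrants of $p$ that $v$ meets.
\item The inequality chain should read $x'\le y_i=x_{i+1}\le x_{m-1}$, since $x_{i+2}\le x_{m-1}$ fails for $m=i+2$.
\end{itemize}
Finally, for $n=i+2$ your interior range is empty and both $x_{i+1}$ and $x_n$ may fail to divide, so $[x',y']$ can be a single block. The paper's statement has the same degenerate case, which is harmless in the WPD application where $n-i$ is large.
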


\begin{proof}
We already observed this to be true in the case of nonseparated leaves.  For prongs, the only nuance is that if $x' \in [x_i,y_i]$, then it may no longer be the case that $y_i$ is a prong that divides $x'$ from the point $y'$, since $x'$ may not be contained in a single quadrant of $y_i$.  However, the construction of dividing prongs implies that $x'$ {\em is} contained in a quadrant of $x_{i+1} = y_{i+1}$, which will then separate it from $y'$. 
\end{proof}
We will call the decomposition given by Lemma \ref{lem: sub broken axis} the {\em induced block decomposition} of $\llbracket x', y' \rrbracket$.  
This will play a useful role at the end of the proof of Proposition \ref{prop:WPD}.

We first show that no element can simultaneously fix points in far away blocks along an axis as in Proposition \ref{prop: free element condition}: 
\begin{lem} \label{lem cant fix two far blocks}
    Suppose $g$  satisfies the conditions in Proposition \ref{prop: free element condition}, and let $B_i$ be the block decomposition of its axis.  Suppose that $h$ is a nontrivial element of $G$ which fixes a leaf $z \in B_i$ and a leaf $w \in B_j$.  Then $B_i$ and $B_j$ both lie in the same fundamental domain for the action of $g$, i.e. $|j-i| < t$. 
\end{lem}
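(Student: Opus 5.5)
The plan is to turn the two fixed leaves into a chain of lozenges and then use condition \ref{it: not chain} of Proposition \ref{prop: free element condition}. Let $t$ be the number of blocks of $\cA(g)$ in a fundamental domain for $g$; we may enlarge $t$ by a fixed additive constant if needed. Suppose $h \neq 1$ fixes $z \in B_i$ and $w \in B_j$, with $j - i \geq t$; we aim for a contradiction.

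\emph{Reduction to a chain.} By Axiom \ref{Axiom_A1}, $h$ fixes a point $x \in z$ and a point $y \in w$. If $x = y$, then $z = w$ since there is only one $\cF^+$-leaf through a point, so $i = j$. Otherwise Theorem \ref{thm_fixed_connected} says $x$ and $y$ are corners of a chain of lozenges $\cC$; after passing to a power of $h$, $h$ fixes every corner of $\cC$. A chain is connected and its projection to $\Lambda^+$ contains $z$ and $w$. By Theorem \ref{thm_pseudo_interval_decomp}, any path from $z$ to $w$ contains the pseudo-interval, so the projection of $\cC$ contains $\llbracket z,w\rrbracket$. Since $j - i \geq t$, this pseudo-interval contains every block $B_m$ with $i < m < j$, which is at least a full fundamental domain of blocks.

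\emph{Block boundaries lie on $\cC$.} Consider first the nonseparated case. For $i < m < j$, the chain meets both of the nonseparated leaves $y_{m}$ and $x_{m+1}$, since its projection contains points on both sides of them. By Claim \ref{claim: chain lozenges and prong or non separated leaves}, a chain that meets two leaves of a nonseparated family must have them as sides. In the prong case, a dividing prong $x_m$ with $i < m < j$ has $\llbracket z,w\rrbracket$ passing through it in a dividing way, so $\cC$ meets at least $4$ non-adjacent quadrants of $x_m$. The same Claim then forces $x_m$ to be a corner of $\cC$; more precisely, it forces the singular leaf through $x_m$ to carry corners or sides of $\cC$, and I would check this case carefully. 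In either case, let $\cC'$ be the maximal chain containing $\cC$. A maximal chain is determined by any one of its corners, because the union of two chains sharing a corner is again a chain. So $\cC'$ is the unique maximal chain containing the block-boundary corners (or sides) at the index $m$, and this holds for every $m$ with $i < m < j$.

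\emph{Invariance and contradiction.} Choose $m$ with $i < m$ and $m + (\text{blocks per period}) < j$; this is possible once $j - i \geq t$ with $t$ chosen suitably. Then $g(\cC')$ is a maximal chain containing the block-boundary leaves at index $m$ shifted by one period. These leaves also belong to $\cC'$, so uniqueness gives $g(\cC') = \cC'$. Hence the projection of $\cC'$ is $g$-invariant and contains a full fundamental domain of $\cA(g)$. Therefore it contains all of $\cA(g)$, which contradicts condition \ref{it: not chain} of Proposition \ref{prop: free element condition}.

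The main obstacle I expect is the prong case of the second step. There I must show that a chain whose projection crosses a dividing prong must have that prong as a corner, rather than merely passing near it. I would first try to get this from the "1, 2 or 3 adjacent quadrants" part of Claim \ref{claim: chain lozenges and prong or non separated leaves} together with Definition \ref{def: separating prong}, which is the reason the latter is phrased with five excluded quadrants.
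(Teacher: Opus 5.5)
Your proposal is correct and is essentially the paper's argument: the two fixed leaves give a chain via Theorem~\ref{thm_fixed_connected}, Claim~\ref{claim: chain lozenges and prong or non separated leaves} puts every block boundary strictly between $z$ and $w$ on that chain, and comparing the maximal chain $\cC'$ with its $g$-translate puts all of $\cA(g)$ in its projection, contradicting condition~\ref{it: not chain} (you prove $g(\cC')=\cC'$ directly, where the paper extends the chain iteratively). Your flagged prong case works as you suspected: the chain meets $Q_1$ and some $Q_k$ with $4\le k\le 2n-2$, so it lies in no three adjacent quadrants and the prong must be a corner; the one step left implicit, here and equally in the paper, is that in the nonseparated case $\cC'$ and $g(\cC')$ share an actual corner on a common block-boundary leaf, which holds because, by Axiom~\ref{Axiom_A1}, all nontrivial elements stabilizing such a leaf fix the same point of it.
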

\begin{proof}
If $h$ fixes two distinct leaves $z$ and $w$, then by Theorem \ref{thm_fixed_connected} they each contain sides of lozenges in a common chain of lozenges.  
Claim \ref{claim: chain lozenges and prong or non separated leaves} implies that, either $z$ and $w$ are contained in the same block, or for any endpoint of a block interval $x_i$ or $y_i$ that is between $z$ and $w$ on the axis, the leaf $x_i$ (respectively, $y_i$) is the side of a lozenge in the chain.   

If $z$ and $w$ are not contained within the same fundamental domain, then (up to switching their names) we have $z \in B_i$ and $w \in B_{i+t+n}$ for some $n\geq 0$.  Thus, $y_i$, $x_{i+1}, \ldots ,x_{i+t}$ and $y_{i+t} = g(y_i)$ are all leaves in sides of this chain of lozenges. If $\cC$ is a chain which shares all of these sides, then $g \cC$ has a side on $g(y_i)$, so is part of the maximal chain containing $\cC$.  Moreover, $g \cC$ has sides on the leaves $g(x_{i+1}),\ldots, g(x_{i+t})$. 

Proceeding iteratively, we see that the maximal chain of lozenges containing $z$ and $w$ has a side contained in each leaf $x_i$ and $y_i$.  Since chains project to connected subsets of $\Lambda^+$, we conclude that the whole axis $\cA$ is contained in the projection of a chain of lozenges, contradicting our assumption. 
\end{proof}

In order to get the WPD condition, we will show that if WPD fails, then we can find an element fixing points in far away blocks, thus contradicting 
Lemma \ref{lem cant fix two far blocks}.  For this, we first need an estimate on intervals with large overlap (Lemma \ref{lem: big overlap}) and a notion of closest point projection.  

\begin{defi}
 Let $\mathcal{A}$ be a pseudo-line in $\Lambda^+$ and let $x \in \Lambda^+$.  The projection $p_{\cA}(x) \subset \cA$ of $x$ to $\cA$ is defined to be either 
 the unique leaf (which is not nonseparated with any other in $\cA$), or the unique union of two nonseparated leaves, such that for all $y\in\cA(g)$, the pseudo-interval $\llbracket x,y \rrbracket$ intersects $p_\cA(x)$.  
\end{defi}

Recall that $\Lambda^+$ is a non-Hausdorff tree.  This definition of projection to $\cA$ in $\Lambda^+$ is  the analog of the closest point projection to an embedded copy of $\R$ in a standard tree. 
The next lemma shows that $p_\cA$ is also (coarsely, since $p_\cA(x)$ can have diameter 2) a closest point projection in $X^+$. 

\begin{lem} \label{lem: projection minimize distance}
For any $x \in \Lambda^+$, and any $y$ in a pseudo-line $\cA$, we have $$ d_{X^+}(x,p_\cA(x)) \leq d_{X^+}(x,y).$$
\end{lem}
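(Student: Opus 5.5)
The approach is to reduce to the fact, already used in the proof of Proposition \ref{prop:quasi-tree}, that the $\Lambda^+$-projection of any path in $X^+$ between two leaves contains the pseudo-interval between them.

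Fix $x \in \Lambda^+$ and $y \in \cA$, and let $\gamma$ be a geodesic in $X^+$ from $x$ to $y$. By the definition of $p_\cA$, the pseudo-interval $\llbracket x, y\rrbracket$ intersects $p_\cA(x)$. By Theorem \ref{thm_pseudo_interval_decomp}, any continuous path from $x$ to $y$ in $\Lambda^+$ contains $\llbracket x,y\rrbracket$, so $\pi_{\Lambda^+}(\gamma)$ contains some leaf $z \in p_\cA(x)$. It is also worth checking directly why $p_\cA(x)$ behaves like a gate. Choose $y'\in\cA$ on the other side of $p_\cA(x)$ from $y$ along $\cA$. Every path from $x$ to $y$ or to $y'$ must pass through $p_\cA(x)$, and in the nonseparated case it passes through one of the two leaves.

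The point of $\pi_{\Lambda^+}(\gamma)$ that equals $z$ lies either on a vertex segment $s_i$ of a realization or on an edge segment $t_i$. In the first case $z$ is itself a vertex of $\gamma$. The sub-path from $x$ to $z$ then has length at most $d_{X^+}(x,y)$, which gives $d_{X^+}(x,z)\le d_{X^+}(x,y)$.

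In the second case $z$ is crossed by the $\cF^-$ leaf representing an edge $e_i$ joining $v_i$ and $v_{i+1}$. Since the face of that $\cF^-$ leaf meets $v_i$, $z$ and $v_{i+1}$, the leaf $z$ is adjacent to both $v_i$ and $v_{i+1}$ in $X^+$. We then replace $v_{i+1},\dots,y$ by the single vertex $z$, producing a path from $x$ to $z$ of length $i+1$. Because $\gamma$ continues from $v_{i+1}$ to $y$, we have $i+1 \le d_{X^+}(x,y)$, so again $d_{X^+}(x,z)\le d_{X^+}(x,y)$. The statement is read with $d_{X^+}(x,p_\cA(x))$ meaning the distance to the set $p_\cA(x)$, that is, the minimum over its at most two leaves, so this bound suffices.

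The main obstacle is the non-Hausdorff bookkeeping, namely justifying that the path really passes through a leaf of $p_\cA(x)$ rather than merely near it. This is exactly what the definition of $p_\cA$ combined with Theorem \ref{thm_pseudo_interval_decomp} provides. A secondary point is the edge-interior case when the $\cF^-$ leaf is singular. Definition \ref{def: graph X^+} only allows nonsingular leaves as edges, so $e_i$ is realized by a nonsingular $\cF^-$ leaf, and any $\cF^+$ leaf it crosses is adjacent to both endpoints. The argument therefore goes through without extra cost.
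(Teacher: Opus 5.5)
Your proof is correct and follows the same route as the paper: the $\Lambda^+$-projection of a geodesic from $x$ to $y$ contains $\llbracket x,y\rrbracket$, hence meets $p_\cA(x)$, and cutting the geodesic at that point gives a path to $p_\cA(x)$ that is no longer than the geodesic. Your case split (the leaf of $p_\cA(x)$ is a vertex of $\gamma$, or it is crossed by the nonsingular $\cF^-$ segment of an edge and is then adjacent to that edge's endpoints) makes explicit the step the paper only asserts as ``there is a path from $x$ to $p$ of length bounded above by the length of $\gamma$.''
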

\begin{proof}
Suppose $\gamma$ is a geodesic path in $X^+$ from $x$ to a leaf $y \in \cA$, then $\pi_{\Lambda^+}(\gamma)$ contains $\llbracket x, y \rrbracket$.  This means that some subpath of $\gamma$ projects to the sub-pseudo-interval $\llbracket x, p \rrbracket$ where $p \in p_\cA(x)$.  It follows that there is a path from $x$ to $p$ of length bounded above by the length of $\gamma$, giving the desired bound.   
\end{proof}

\begin{lem}[Large overlap] \label{lem: big overlap}
Let $\cA$ be a pseudo-line, let $\epsilon>0$ let $\llbracket a, b \rrbracket \subset \cA$, 
and suppose $\dX(a, b) > 4 \epsilon + 5$, and that
$\dX(a, h(a)) < \epsilon$ and $\dX(b, h(b)) < \epsilon$ for some $h \in G$.  
Then, there exists $J = \llbracket j_1, j_2 \rrbracket \subset \llbracket a, b \rrbracket$ 
such that $h(J) \subset  \llbracket a, b \rrbracket$ and satisfying: 
\[ \dX(j_1, a) \leq 2\epsilon +2, \quad \dX(j_2, b) \leq 2\epsilon +2, \]
\[ \dX(h(j_1), a) \leq 2\epsilon +2, \quad \text{ and } \dX(h(j_2), b) \leq 2\epsilon +2.\]
\end{lem}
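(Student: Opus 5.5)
The approach is tree-like geometry in the non-Hausdorff tree $\Lambda^+$, combined with the closest-point projection $p_\cA$ and Lemma \ref{lem: projection minimize distance}. The idea is to project the image pseudo-interval $h(\llbracket a,b\rrbracket)$ back onto $\llbracket a,b\rrbracket$. Since $h$ preserves $\Lambda^+$ and the structure of pseudo-intervals, $h\llbracket a,b\rrbracket = \llbracket h(a),h(b)\rrbracket$. Set $a' = p_\cA(h(a))$ and $b' = p_\cA(h(b))$, choosing one leaf of the projection when it consists of two nonseparated leaves. By Lemma \ref{lem: projection minimize distance},
\[ \dX(h(a),a') \leq \dX(h(a),a) < \epsilon \quad\text{and}\quad \dX(h(b),b')<\epsilon, \]
so $\dX(a,a')<2\epsilon+1$ and $\dX(b,b')<2\epsilon+1$, where the extra $1$ absorbs the two-leaf ambiguity. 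Since $\dX(a,b)>4\epsilon+5$, these bounds show that $a'$ and $b'$ lie near $a$ and $b$ respectively, in the right order along $\cA$.

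The key tree fact is that $\llbracket h(a),h(b)\rrbracket$ contains $\llbracket a',b'\rrbracket$ whenever $a'\neq b'$ and their projections are not adjacent. This holds because any path from $h(a)$ to $h(b)$ must go from $h(a)$ to $\cA$ through $a'$, then along $\cA$, then leave at $b'$; here we use the simple-connectivity axiom of non-Hausdorff trees together with Theorem \ref{thm_pseudo_interval_decomp}. In particular, $\llbracket a',b'\rrbracket \subset \llbracket a,b\rrbracket\cap h\llbracket a,b\rrbracket$.

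We then choose $J=\llbracket j_1,j_2\rrbracket := h^{-1}\llbracket a',b'\rrbracket$, which is a sub-pseudo-interval of $\llbracket a,b\rrbracket$ with $h(J)=\llbracket a',b'\rrbracket\subset \llbracket a,b\rrbracket$. The endpoints satisfy $h(j_1)=a'$ and $h(j_2)=b'$, so $\dX(h(j_1),a)\le 2\epsilon+2$ and $\dX(h(j_2),b)\le 2\epsilon+2$. For the other two bounds, we also need $J\subset\llbracket a,b\rrbracket$ with $j_i$ close to $a$ and $b$. To get this, we run the symmetric argument with $h^{-1}$: the projections $p_\cA(h^{-1}(a))$ and $p_\cA(h^{-1}(b))$ lie within $2\epsilon+1$ of $a$ and $b$. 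We then aim to show $j_1 = h^{-1}(a')$ is within $\epsilon+1$ of $a$, using $\dX(h^{-1}(a'), h^{-1}(h(a))) = \dX(a',h(a))<\epsilon$ together with $\dX(a, h^{-1}(a))=\dX(h(a),a)<\epsilon$. Combining these gives $\dX(j_1,a)<2\epsilon\le 2\epsilon+2$, and the same argument handles $j_2$.

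The remaining point, which I expect to be the main obstacle, is verifying that $J=h^{-1}\llbracket a',b'\rrbracket$ really lies inside $\llbracket a,b\rrbracket$, not merely that it is close to it. For this, the plan is to replace $\llbracket a',b'\rrbracket$ by the intersection $\llbracket a,b\rrbracket\cap h\llbracket a,b\rrbracket$. In a non-Hausdorff tree this intersection is again a pseudo-interval, being the overlap of two pseudo-intervals, and it contains $\llbracket a',b'\rrbracket$. We take $J := \llbracket a,b\rrbracket\cap h^{-1}\llbracket a,b\rrbracket$, so that $h(J)=h\llbracket a,b\rrbracket\cap\llbracket a,b\rrbracket$. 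Its endpoints lie between the projection points and $a$ or $b$, so the distance bounds follow from the estimates above and the monotonicity of $\dX$ along pseudo-intervals. Monotonicity holds up to an additive $1$ coming from nonseparated pairs, by the argument of Lemma \ref{lem: projection minimize distance}. Care is needed with the nonseparated-leaf ambiguity, since this is what yields the constant $2\epsilon+2$. The hypothesis $\dX(a,b)>4\epsilon+5$ ensures the intersection is nonempty and correctly oriented, meaning $h$ does not flip it.
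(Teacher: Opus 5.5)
Your final choice $J=\llbracket a,b\rrbracket\cap h^{-1}\llbracket a,b\rrbracket$ is the same as the paper's. You locate its endpoints (and those of $h(J)$) through the projections $p_\cA(h^{\pm1}(a))$ and $p_\cA(h^{\pm1}(b))$, and bound them with Lemma~\ref{lem: projection minimize distance} and the triangle inequality; this is essentially the paper's argument, and it is correct.

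Two intermediate statements are inaccurate, though neither affects the conclusion. First, the claim $\llbracket a',b'\rrbracket\subset\llbracket a,b\rrbracket\cap h\llbracket a,b\rrbracket$ fails when $a'$ or $b'$ lands outside $\llbracket a,b\rrbracket$. Second, picking the wrong leaf of a nonseparated pair costs $2$, not $1$. The conclusion survives because each endpoint of the intersection is either $a$ (resp.\ $b$) or the corresponding projection point, and in both cases it lies within $2\epsilon+2$ of $a$ (resp.\ $b$).
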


\begin{proof}
    Let $J = \llbracket a, b \rrbracket \cap \llbracket h^{-1}(a), h^{-1}(b) \rrbracket$.  $J$ is a pseudo-interval since it is the intersection of two pseudo-intervals. 
    We show it is nonempty and has the desired properties.  

    First, by definition of projection, we can equivalently write $J = \llbracket a, b \rrbracket \cap \llbracket a', b' \rrbracket$ where $a' \in p_\cA (h^{-1}(a))$ and $b' \in p_\cA (h^{-1}(b))$.   We have 
    \[ \dX(a, a') \leq \dX(a, h^{-1}(a)) + \dX(h^{-1}(a), a') \leq \epsilon + \epsilon + 2\] 
    and similarly $\dX(b, b') \leq 2 \epsilon +2$.   In this estimate, we use that $h$ acts by isometries so $\dX(a, h^{-1}(a)) = \dX(h(a), a)$ and that $a'$ is (up to distance 2) a closest point in $\cA$ to $h^{-1}(a)$.  The same estimate holds for $b$.  
    Our assumptions on the distance between $a$ and $b$ in $X^+$ implies that either $a \in \llbracket a', b \rrbracket$ or 
    $a' \in \llbracket a, b \rrbracket$, and is distance (in $X^+$) at most $2 \epsilon +2$ from $a$.  Similarly, we have either that $b \in \llbracket a, b' \rrbracket$ or that $b$ is in $\llbracket a, b' \rrbracket$ but not too close to $a$, ensuring that $J$ is nonempty. 

    We also have that 
    \[ h(J) = \llbracket h(a), h(b) \rrbracket \cap \llbracket a, b \rrbracket \] 
    and can similarly write this in the form 
    $h(J) = \llbracket a, b \rrbracket \cap \llbracket a'', b'' \rrbracket$ where $a'' \in p_\cA(h(a))$ and $b''\in p_\cA(h(b))$.  The same estimate as before says that $\dX(a, a'') \leq 2\epsilon +2$ and $\dX(b, b'') \leq 2\epsilon + 2$.
    \end{proof}

With these tools we can now prove Proposition \ref{prop:WPD}.  
\begin{proof}[Proof of Proposition \ref{prop:WPD}]
    Let $g$ be as in Proposition \ref{prop: free element condition}, let $\cA$ be the axis of $g$ and $\cup B_i$ is its block decomposition. 
Let $\epsilon >0$ be given. 
    
    Let $m$ be the maximum diameter (in $X^+$) of a block $B_i$ in $\mathcal{A}$.  Since $g$ acts by isometries of $X^+$, translating blocks along the axis, this maximum is indeed finite.  By Lemma \ref{lem: blocks are long}, we have for all $k$,
    \[ (m+2)(k-1) + 2 \geq \dX(B_i, B_{i+k}) \geq k-1. \] 
    The factor $m+2$ appears because the distance from $y_i$ to $x_{i+1}$ is 2 when these are nonseparated leaves. 
    
In particular, there exists $n>0$ such that, for any $x \in B_i$ and $y \in \cA$, if $d_{X^+}(x, y) \leq 2\epsilon +3$ then $y \in B_{i+k(y)}$ for some $|k(y)| < n$.

Suppose for contradiction that there exist infinitely $a,b \in \cA$ with $d_X(a,b)> \max\{(2n+ t +1) (m+2) + 2, 4\epsilon +5\}$ and infinitely many $h \in G$ such that $d(a, h(a))< \epsilon$ and $d(b, h(b))< \epsilon$.  
Since $(m+2)(k-1) + 2 \geq \dX(B_i, B_{i+k})$, we may assume that $a \in B_0$ and $b \in B_N$, with $N > 2n+ t +1$.

For each such $h$, let $J_h=\llbracket p_h, q_h\rrbracket$ be the pseudo-interval given by Lemma \ref{lem: big overlap}.  Then as $d_{X^+}(a,p_h)\leq 2\epsilon + 2$ and $d_{X^+}(b,q_h)\leq 2\epsilon + 2$, there exists $0\leq i_h,i'_h,j_h, j'_h \leq n$ such that $p_h\in B_{i_h}$, $h(p_h)\in B_{i'_h}$, $q_h\in B_{N-j_h}$ and $h(q_h)\in B_{N-j'_h}$.

By Lemma \ref{lem: sub broken axis} the block decomposition is cannonical so any element taking a block into the axis sends it to another block.  By the pigeonhole principle, there exists $h_1 \neq h_2$ such that $i_{h_1} = i_{h_2},i'_{h_1}= i'_{h_2},j_{h_1}= j_{h_2}$ and $j'_{h_1}= j'_{h_2}$. In particular, taking $f=h_1\circ h_2^{-1}$, there exists $0\leq i,j \leq n$ such that $f(B_i) = B_i$ and $f(B_{N-j}) = B_{N-j}$.
So $f$ fixes the right hand side $y_i$ of $B_i$ and the left hand side $x_{N-j}$ of $B_{N-j}$. But $d_{X^+}(y_i, x_{N-j}) \geq N-j-i-1 \geq N-2n-1 > t$, which contradicts Lemma \ref{lem cant fix two far blocks}.

\end{proof}

Given Proposition \ref{prop:quasi-tree}, Proposition \ref{prop:WPD} ends the proof of Theorem \ref{thm_generic_plane_version}.
From this, we also deduce Corollary \ref{cor_acylindrically_hyperbolic}, i.e., the fact that a group admitting an Anosov-like action on a nonskew nontrivial plane is acylindrically hyperbolic, thanks to the following result of Osin: 
\begin{thm}[See Osin \cite{Osin2016Acylindrically}, Theorem 1.2]
A group $G$ admits a non-elementary acylindrical action on a hyperbolic space if and only if 
$G$ is not virtually cyclic and admits an action on a hyperbolic space such that at least one element is loxodromic and satisfies the WPD condition.
\end{thm}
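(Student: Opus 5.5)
We do not reprove Osin's theorem here; it is quoted from \cite{Osin2016Acylindrically}, and we only use it to deduce Corollary \ref{cor_acylindrically_hyperbolic}. If one wanted to reconstruct it, the plan would be to treat the two implications separately. For the forward direction, let $G$ act acylindrically and nonelementarily on a hyperbolic space $S$.
\begin{enumerate}
\item Nonelementarity gives two loxodromic elements $g_1, g_2$ with disjoint fixed pairs in $\partial S$.
\item Acylindricity is a uniform version of the WPD condition. For any loxodromic $g$ and basepoint $p$, take $n$ with $d(p,g^np)$ larger than the acylindricity constant $R(\epsilon)$. Then the set of $h$ moving both $p$ and $g^np$ by less than $\epsilon$ has at most $N(\epsilon)$ elements, so $g$ is WPD.
\item $G$ is not virtually cyclic, since a virtually cyclic group acting on a hyperbolic space has limit set of at most two points, whereas $g_1,g_2$ give at least four.
\end{enumerate}

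For the converse, let $g$ be a WPD loxodromic element and $G$ not virtually cyclic.
\begin{enumerate}
\item First show that the stabilizer $E(g)$ of the pair of fixed points $\{g^{\pm\infty}\}\subset \partial S$ is virtually cyclic. WPD makes the set of elements coarsely fixing a long segment of the quasi-axis of $g$ finite. Elements of $E(g)$ move the axis within bounded Hausdorff distance, so after composing with a power of $g$ they coarsely fix a long segment, and only finitely many cosets of $\langle g\rangle$ can occur.
\item Next, following Dahmani--Guirardel--Osin, show that $E(g)$ is hyperbolically embedded in $G$. Choose a relative generating set $Y$ (essentially the elements moving the basepoint a bounded amount) such that $\mathrm{Cay}(G, Y\sqcup E(g))$ is hyperbolic and quasi-isometric to a coned-off version of the orbit.
\item Then prove local finiteness of the relative metric $\widehat d$ on $E(g)$. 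This is where WPD is used again: a component of a geodesic polygon lying in a coset of $E(g)$ has length controlled by the fellow-travelling of quasi-axes, and WPD bounds the number of elements realizing such a fellow-travel.
\item Finally, invoke Osin's result that if $H \hookrightarrow_h G$, then $G$ acts acylindrically on $\mathrm{Cay}(G, X\sqcup H)$ for a suitable enlarged $X$. The action is nonelementary because $E(g)$ is infinite and proper, as $G$ is not virtually cyclic. Hence $g$ and a conjugate $fgf^{-1}$ with $f\notin E(g)$ are independent loxodromics.
\end{enumerate}

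The main obstacle is the last step: upgrading a hyperbolically embedded subgroup to an acylindrical action. The first thing to try is Osin's argument via isolated components in geodesic $n$-gons of the relative Cayley graph. Their total $\widehat d$-length is linearly bounded in $n$, so the elements moving two far-apart points a bounded amount force bounded-length isolated components. Local finiteness of $\widehat d$ then leaves only finitely many such elements, with a uniform bound.

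For our application, only the easy consequence is needed. By Proposition \ref{prop:WPD} and Proposition \ref{prop_axis}, the group $G$ acting on the quasi-tree $X^+$ has a WPD loxodromic element. Moreover, $G$ is not virtually cyclic, since it contains independent loxodromics $g,h$ with distinct axis endpoints in $\partial X^+$. Osin's theorem then yields Corollary \ref{cor_acylindrically_hyperbolic}.
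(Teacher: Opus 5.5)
Like the paper, you ultimately just cite Osin, and your deduction of Corollary~\ref{cor_acylindrically_hyperbolic} is correct: Proposition~\ref{prop:WPD} supplies a WPD loxodromic on $X^+$, and the two loxodromics with disjoint endpoint pairs from Proposition~\ref{prop_axis} show $G$ is not virtually cyclic. That second check makes explicit a hypothesis the paper leaves implicit. However, the direction you need (WPD loxodromic and not virtually cyclic $\Rightarrow$ non-elementary acylindrical action) is the \emph{hard} one, not ``the easy consequence.'' It is also the direction where your reconstruction goes wrong.

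The error is in your last step. In $\mathrm{Cay}(G, Y\sqcup E(g))$ every element of $E(g)$ is a generator, so $d(1,g^n)\le 1$ for all $n$. Hence $g$ acts elliptically there. So does every conjugate $fgf^{-1}$, since its orbit of the vertex $f$ is $\{fg^n\}$, which has diameter at most $1$. Coning off $E(g)$ thus kills exactly the loxodromics you started with. The pair $g, fgf^{-1}$ with $f\notin E(g)$ is independent for the original action on $S$, but it cannot witness that the new acylindrical action is nonelementary.

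The standard fix is to produce new loxodromics in the relative Cayley graph. One takes elements $ah$ with $a\notin E(g)$ and $h\in E(g)$ of large $\widehat d$-length, whose powers trace paths with long isolated $E(g)$-components, as in Dahmani--Guirardel--Osin. Equivalently, one shows the action on $\mathrm{Cay}(G, Y\sqcup E(g))$ has unbounded orbits. One then applies Osin's trichotomy for acylindrical actions (bounded orbits, or $G$ virtually cyclic, or nonelementary), using that $G$ is not virtually cyclic. A smaller point in your first step: to make the Hausdorff-distance argument work you should pass to the index $\le 2$ subgroup of $E(g)$ fixing $g^{\pm\infty}$ pointwise. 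An element swapping the endpoints cannot be corrected by a power of $g$ to coarsely fix both $p$ and $g^np$.
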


\section{Graphs and metrics on the plane} \label{sec: metrics on plane}
Our choice to construct the graphs $X$, $X^+$ and $X^-$ above was just one of the many ways one can use the intersection pattern of leaves in a bifoliated plane to build a metric space on which the automorphisms of the bifoliated plane acts by isometries. In this section, we explain how one can recover quasi-isomorphic spaces using (discrete) metrics on the plane, and explain the parallel with $\mathrm{CAT}(0)$ cube complexes via the \emph{dualizable systems} of \cite{Petyt-Z24}.

Before doing this, we discuss another graph.  Its metric structure was previously studied by Barbot, but without any formalization as a combinatorial object.  As well as indicating the connection with Barbot's work, we will use this structure later in the proof of Theorem \ref{thm: genericity broken axis}. 

\subsection{Barbot's leaf space metric}

In \cite{Bar98}, Barbot considers a ``distance" on leaf spaces by counting the minimal number of intervals (or blocks in a pseudo-interval) between two points.  One can capture this data combinatorially, paralleling the construction of $X^\pm$, as follows: 

\begin{defi}\label{def_broken_axis_graph}
Define $\Gamma^\pm$ to be the graph whose vertices are leaves in $\cF^\pm$ and two vertices $x,y$ are connected by an edge if there exists a path in $P$, between $x$ and $y$ that is transverse to $\cF^\pm$. Equivalently there is an edge between $x$ and $y$ if and only if the pseudo-interval $\llbracket x,y \rrbracket$ is a true interval $[x,y]$.
\end{defi}

    It follows from the definition that $X^\pm$ is a subgraph of $\Gamma^\pm$, as one has an edge between $x,y$ in $X^\pm$ if and only if the transverse path between $x$ and $y$ in $P$ is realized by a segment of a leaf of $\cF^\mp$.

Unlike $X^\pm$, there is in general no quasi-isometry between $\Gamma^+$ and $\Gamma^-$.  For example, one can construct a bifoliated plane with no prong leaves and where the leaf space of $\cF^+$ is homeomorphic to $\mathbb{R}$, which implies that $\Gamma^+$ has diameter 1, while $\Gamma^-$ is unbounded.  For a concrete example, take the open region between the graphs $y=\sin(x)$ and $y=\sin(x)+1/2$ in $\mathbb{R}^2$ with the horizontal and vertical foliations.  
However, each is individually a quasi-tree: 

\begin{prop} 
$\Gamma^+$ and $\Gamma^-$ are quasi-trees.
\end{prop}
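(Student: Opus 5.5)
The plan is to run the same argument used for Proposition \ref{prop:quasi-tree}, namely Manning's Bottleneck Criterion (Theorem \ref{lem: quasi tree iff}), now applied to $\Gamma^+$; the case of $\Gamma^-$ is symmetric. The key input is again that $\Lambda^+$ is a non-Hausdorff tree, so every continuous path from $x$ to $y$ contains the pseudo-interval $\llbracket x,y\rrbracket$ (Theorem \ref{thm_pseudo_interval_decomp}).

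First I would set up the analogue of realizations. An edge of $\Gamma^+$ between $v$ and $w$ is a true interval $[v,w]\subset \Lambda^+$. So a path $v_0,\dots,v_k$ in $\Gamma^+$ projects to a continuous path in $\Lambda^+$, namely the concatenation of the intervals $[v_{i-1},v_i]$. Every point of this projection is at $\Gamma^+$-distance at most $1$ from a vertex of the path, because any leaf $z\in[v_{i-1},v_i]$ satisfies $\llbracket v_{i-1},z\rrbracket=[v_{i-1},z]$, which is an interval. Consequently, for any path $c$ from $x$ to $y$, each point of $\llbracket x,y\rrbracket$ is within distance $1$ of $c$.

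Next I would prove the analogue of Lemma \ref{lem:geodesics_vs_pseudo_interval}: every vertex of a $\Gamma^+$-geodesic $\gamma$ from $x$ to $y$ lies within a uniform distance (I expect $1$ or $2$) of $\llbracket x,y\rrbracket$. Suppose some vertex $v$ of $\gamma$ is far from $\llbracket x,y\rrbracket$. Let $q\in\llbracket x,y\rrbracket$ be the projection of $v$, defined analogously to $p_\cA$: the point, or pair of nonseparated points, through which every pseudo-interval from $v$ to $\llbracket x,y\rrbracket$ passes. By simple connectivity of $\Lambda^+$, the projected path of $\gamma$ must pass through $q$ both when leaving $\llbracket x,y\rrbracket$ toward $v$ and when returning. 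So there are edges $e_i$ before $v$ and $e_j$ after $v$ whose intervals contain $q$, or contain a member of a nonseparated pair at $q$. Take an endpoint $u$ of $e_i$ on the $\llbracket x,y\rrbracket$ side and an endpoint $u'$ of $e_j$ on that side. Then $\llbracket u,u'\rrbracket$ is the union of at most two intervals through $q$, possibly separated by one nonseparated jump. Hence the excursion to $v$ can be replaced by a path of length at most $2$ (or $3$), which contradicts geodesicity once $v$ is farther than this constant from $q$.

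With these two facts the bottleneck property follows exactly as in the proof of Proposition \ref{prop:quasi-tree}. The midpoint $v$ of a geodesic lies within distance about $2$ of $\llbracket x,y\rrbracket$, and every path from $x$ to $y$ comes within distance $1$ of each point of $\llbracket x,y\rrbracket$, so Manning's criterion holds with $K$ around $3$ or $4$.

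I expect the main obstacle to be the shortcut step when $q$ is a nonseparated pair, where the two crossing edges pass through different members of the pair. There I would use that $\llbracket u,u'\rrbracket$ decomposes as two intervals joined at nonseparated leaves, so $u$ and $u'$ are joined through the pair by a path of length at most $3$. This keeps the constant uniform. Whether the pair is counted once or twice only changes $K$.
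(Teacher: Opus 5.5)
Your proposal is correct and takes essentially the same route as the paper. You project $\Gamma^+$-paths to continuous paths in $\Lambda^+$, which contain $\llbracket x,y\rrbracket$ and lie in the $1$-neighborhood of the path. You then use a shortcut argument to show that $\Gamma^+$-geodesics stay uniformly close to $\llbracket x,y\rrbracket$, and conclude with Manning's Bottleneck Criterion.

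The only difference is cosmetic. The paper reruns the proof of Lemma~\ref{lem:geodesics_vs_pseudo_interval} verbatim, shortcutting through a boundary leaf of a saturation. You instead shortcut directly through the projection point $q$, using that sub-intervals of edges are edges, which is if anything cleaner for $\Gamma^+$. Your worry about the nonseparated-pair case is harmless: a leaf close to the pair lies on both crossing edges, so it gives a shortcut of length $2$, well within your claimed bound of $3$.
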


\begin{proof}
We show this for $\Gamma^+$, the case of $\Gamma^-$ being identical.  The proof follows very closely that of Proposition \ref{prop:quasi-tree}: first, for any path $\gamma$ formed by vertices $v_1, v_2,\ldots v_n$ in $\Gamma^+$ define the projection $\pi_{\Lambda^+}(\gamma)$ to be the $\cF^+$ saturation of the union (over $i$) of a sequence of transverse paths from $v_i$ to $v_{i+1}$ realizing the edges between adjacent vertices.  This does not depend on the choice of transverse path, because $\Lambda^+$ is simply connected, and by construction for any $x,y \in \Lambda^+$, the projection of any path from $x$ to $y$ in $\Gamma^+$ contains $\llbracket x, y \rrbracket$.  

The proof of Lemma \ref{lem:geodesics_vs_pseudo_interval} can be repeated verbatim to show that geodesic paths from $x$ to $y$ in $\Gamma^+$ project into the $2$-neighborhood of $\llbracket x, y \rrbracket$, so Manning's bottleneck criterion applies as before.  
\end{proof}

We also have: 

\begin{claim} \label{claim: wpd for non separated metric}
    Let $g \in G$ be an element satisfying Proposition \ref{prop: free element condition}, Item \ref{it: non sep}.
    Then it is a WPD element for the action of $G$ on $\Gamma^+$.
\end{claim}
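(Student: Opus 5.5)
The plan is to rerun the proof of Proposition \ref{prop:WPD} verbatim, with $d_{X^+}$ replaced by the path metric $d_{\Gamma^+}$. The point is that every ingredient used there has a direct analogue in $\Gamma^+$ when the axis is of type \ref{it: non sep}. Let $g$ be as in Proposition \ref{prop: free element condition}, Item \ref{it: non sep}, and let $\cA=\bigcup_i B_i$, $B_i=[x_i,y_i]$, be its block decomposition, where $y_i$ is nonseparated from $x_{i+1}$.

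First I would check that $g$ is loxodromic on $\Gamma^+$ and establish the analogue of Lemma \ref{lem: blocks are long}. An edge of $\Gamma^+$ between $u,v$ means that $\llbracket u,v\rrbracket$ is a true interval. A true interval cannot contain two nonseparated leaves, so its saturation meets at most one block in a nontrivial way. The argument of Lemma \ref{lem: blocks are long} then gives $d_{\Gamma^+}(B_i,B_{i+k})\geq k-1$, using that projections of paths in $\Gamma^+$ contain pseudo-intervals, as noted in the proof that $\Gamma^+$ is a quasi-tree. Conversely, each block $B_i$ is an interval, so it has diameter $1$ in $\Gamma^+$, and $y_i,x_{i+1}$ are at distance at most $2$. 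This gives the two-sided estimate $3(k-1)+2\geq d_{\Gamma^+}(B_i,B_{i+k})\geq k-1$, which plays the role of the $(m+2)$ estimate in the proof of Proposition \ref{prop:WPD}. In particular $g$ acts loxodromically on $\Gamma^+$.

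Next I would transfer Lemmas \ref{lem: projection minimize distance} and \ref{lem: big overlap}. Lemma \ref{lem: projection minimize distance} only used that the projection to $\Lambda^+$ of a geodesic from $x$ to $y$ contains $\llbracket x,y\rrbracket$, so the same argument works in $\Gamma^+$. The constant $2$ (the diameter of $p_\cA(x)$) stays valid because two nonseparated leaves are at $\Gamma^+$-distance at most $2$. Lemma \ref{lem: big overlap} is then pure coarse geometry from this projection property, so it holds with the same constants. Lemma \ref{lem cant fix two far blocks} involves no metric at all; it only uses Theorem \ref{thm_fixed_connected}, Claim \ref{claim: chain lozenges and prong or non separated leaves} and Item \ref{it: not chain}, so it applies unchanged.

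Finally I would repeat the pigeonhole argument. Fix $\epsilon>0$ and choose $n$ so that points within $2\epsilon+3$ of a point in $B_i$ lie in $B_{i+k}$ with $|k|<n$. Take $a\in B_0$ and $b\in B_N$ with $N>2n+t+1$. Suppose infinitely many $h$ move both $a$ and $b$ by less than $\epsilon$. For each such $h$, the overlap interval $J_h$ from Lemma \ref{lem: big overlap} has endpoints, and $h$-images of endpoints, in finitely many blocks near $B_0$ and near $B_N$. Two such elements $h_1\neq h_2$ must then give the same block indices. Since $h$ carries blocks to blocks (the decomposition is canonical, by Lemma \ref{lem: sub broken axis}), $f=h_1h_2^{-1}$ fixes $y_i$ and $x_{N-j}$. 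These leaves are more than $t$ blocks apart, which contradicts Lemma \ref{lem cant fix two far blocks}.

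The main obstacle is the first step. One has to make precise that a $\Gamma^+$-edge cannot bridge two blocks, which requires that the transverse path realizing an edge projects to a genuine interval, hence one without a pair of nonseparated leaves. This is exactly why the claim is restricted to Item \ref{it: non sep}: a dividing prong along an embedded line does not block $\Gamma^+$-edges, since the whole line is a single interval.
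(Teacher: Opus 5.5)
Your proposal is correct and matches the paper's proof, which also reruns the argument of Proposition \ref{prop:WPD} in $\Gamma^+$. The paper's justification is that $d_{\Gamma^+}$ along the axis counts blocks, so the analogues of Lemmas \ref{lem: blocks are long} and \ref{lem: projection minimize distance} hold; you supply more of the details than the paper does, namely why a $\Gamma^+$-edge cannot bridge two blocks and the two-sided block estimate with $m=1$.
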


\begin{proof}
Here the proof of Proposition \ref{prop:WPD} applies exactly as before:
The distance $d_{\Gamma^+}$ on the axis will be equal to the number of block intervals in the axis separating two points, therefore it will satisfy the estimate of Lemma \ref{lem: blocks are long}, and Lemma \ref{lem: projection minimize distance}. 
\end{proof}
We use this in the next section to show that, under very general conditions, loxodromic elements whose axis are a disjoint union of intervals, as in Item \ref{it: non sep}, are generic, see Theorem \ref{thm: genericity broken axis}. 

\subsection{Metrics on the plane and dualizable systems}\label{subsec: metric on plane} 

As we advertized in the introduction, our graph $X$ is a bifoliated plane version of Hagen's \emph{contact graph} for $\mathrm{CAT}(0)$ cube complexes \cite{hagen2014weak}. A slightly different point of view on Hagen's contact graph is to see it as a different choice of metric on the cube complex itself, where the distance between points is now measured by counting \emph{strongly separated} hyperplanes one can fit between two points (see \cite{Genevois20}).

In this section we will show how one can similarly interpret the distances on $X$, $X^\pm$ and $\Gamma^\pm$ as metrics on $P$, and compute distances by considering the maximal cardinality of sequences of leaves, with certain property, separating two points in $P$.

In fact, more then a parallel, both $\mathrm{CAT}(0)$ cube complexes and bifoliated planes can be seen as examples of a more general framework of \emph{dualizable systems}, as introduced by Petyt and Zalloum in \cite{Petyt-Z24}: This framework starts with a set $S$ together with a family of bipartitions $W$ of $S$, called \emph{walls}. Given this data, \cite{Petyt-Z24} introduces a \emph{dualizable system} $\mathcal C$ which is a set, satisfying certain properties, whose elements are sequences of walls. From this data, \cite{Petyt-Z24} builds a metric on $S$, which exhibits many features of a non-positively curved space (see \cite[Theorem K]{Petyt-Z24}), by taking the distance between two points in $S$ to be the maximal cardinality among the elements $c\in \mathcal C$ that separates the points.

This framework recovers a space quasi-isometric to the contact graph of Hagen by considering the dualizable system $\mathcal C$ to be any sequence of strongly separating hyperplanes (\cite{Petyt-Z24}). Here we will see that we can recover distances that are quasi-isometric to our graphs $X$, $X^\pm$ and $\Gamma^\pm$ by considering the set $S$ to be the plane $P$, walls to be leaves of $\cF^+$, $\cF^-$ or $\cF^+\cup\cF^-$ and appropriate choices of dualizable systems $\mathcal C$ that we introduce now:

\begin{defi}
A pair of disjoint leaves in $\cF^+\cup \cF^-$ is said to be \emph{hyperbolically aligned} if there is no third leaf intersecting both of them\footnote{This property is called strongly separated in the case of hyperplanes in a $\mathrm{CAT}(0)$ cube complex, but we chose a different name here to avoid confusion.}. A subset $\{l_1, \cdots l_k\}$ of $\cF^+ \cup \cF^-$ is said be hyperbolically aligned if each pair $l_i, l_j$ is. 
\end{defi}
We denote by $\mathcal{C}_H$, $\mathcal{C}_{H^+}$, and $\mathcal{C}_{H^-}$,  the collection of all hyperbolically aligned subsets of, respectively, $\cF^+ \cup \cF^-$, $\cF^+$ and $\cF^-$.

\begin{defi}
Say that a pair of disjoint leaves $l,l' \in \cF^+$ is  \emph{Reeb-separated} if the pseudo-interval $\llbracket l, l' \rrbracket$ is not an interval, i.e., if it contains a pair of non-separated leaves\footnote{Equivalently, $l,l'$ are Reeb-separated if one sees a generalized Reeb component ``in between'' $l$ and $l'$, hence our choice of name.}. A subset $c=\{l_1, \cdots l_k\}$ is said to be Reeb-separated if each pair $l_i,l_j$ is. 
\end{defi}
Paralleling the notation above, we denote the collection of all Reeb-separated subsets of $\cF^+$ by $\mathcal{C}_{R}^+$, and $\mathcal{C}_{R}^-$ similarly. 

Finally, just for this discussion, we say that a leaf $l$ \emph{separates} $x,y \in P$ if $x,y$ do not both lie on $l$ nor both lie in the same connected component of $P \setminus l$. Notice that this definition is nonstandard, as it allows for either $x$ or $y$ to be on $l$. We take this convention in order to ensure the triangle inequality in the metric $d_H$ we define below. 
A collection of leaves $c=\{l_1, \cdots l_k\}$ is said to \emph{separate} $x,y$ if each $l_i$ does.

Using the terminology above, we make the following definitions
\begin{defi}
    For any $x,y\in P$ we set:
    \begin{enumerate}
    \item $d_H(x,y):=\sup\{|c|: c \in \mathcal{C}_H \text{ and } c \text{ separates  }x,y\}$;
    \item $d_\pm(x,y):=1+\sup\{|c|: c \in \mathcal{C}_{H^\pm} \text{ and } c \text{ separates  }x,y\}$ if $x\neq y$ and $d_\pm(x,x)=0$;
     \item $d_{R^\pm}(x,y):=1+ \sup\{|c|: c \in \mathcal{C}_{R^\pm} \text{ and } c \text{ separates  }x,y\}$ if $x\neq y$ and $d_{R^\pm}(x,x)=0$.
\end{enumerate}
\end{defi}

We will next show that the functions defined above are actually metrics on $P$. Notice that the metric $d_{R^\pm}$ has actually been defined and used, not on the plane $P$ but on the leaf space $\Lambda^\pm$, already in the work of Barbot \cite{Bar98}.

\begin{prop}
    The five functions $d_H, d_\pm, d_{R^\pm}$ are all metrics on $P$.
\end{prop}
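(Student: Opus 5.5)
The plan is to check the metric axioms one at a time: finiteness, symmetry, positivity, and the triangle inequality. Symmetry is immediate, since the notion "$c$ separates $x,y$" is symmetric in $x$ and $y$.

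\textbf{Positivity.} For $d_\pm$ and $d_{R^\pm}$ positivity is built in: the value is $0$ exactly when $x=y$, and at least $1$ otherwise. For $d_H$, given $x\neq y$ I will exhibit a single leaf separating them in the nonstandard sense. The leaf $\cF^+(x)$ works, since $x$ lies on it and $y$ either lies on it or not. If both points lie on $\cF^+(x)$, then $\cF^-(x)$ contains $x$ but not $y$, because a $\cF^+$ leaf and a $\cF^-$ leaf meet in at most one point. A singleton is trivially hyperbolically aligned, so $d_H(x,y)\ge 1$.

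\textbf{Finiteness.} This is where I expect the real work. I plan to join $x$ and $y$ by a polygonal path $\gamma_P$ realizing a path of length $N$ in $X$ (or in $X^\pm$, or $\Gamma^\pm$ for the Reeb metrics). Such a path exists by connectedness of these graphs, which was established implicitly in Section~\ref{sec: definition graphs and quasi-tree}.

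Every leaf separating $x$ from $y$ must meet $\gamma_P$. It therefore meets one of the $N+1$ leaf segments of the path, or at least one of the leaves carrying these segments. Two hyperbolically aligned leaves cannot both intersect the same leaf, by definition. So each carrying leaf is met by at most one element of $c$, or coincides with at most one element. This gives $|c|\le N+1$.

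For Reeb-separated families I will argue the same way along a transverse path. A single interval $[v_i,v_{i+1}]$ of $\Lambda^\pm$ contains no pair of leaves with non-interval pseudo-interval. So each edge of the $\Gamma^\pm$-path contributes at most one element of $c$. In effect this already compares $d$ with the graph distance.

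\textbf{Triangle inequality.} Let $c$ be an admissible family separating $x$ and $z$, and let $y$ be arbitrary. I claim each $l\in c$ separates $x,y$ or separates $y,z$ in the nonstandard sense. Indeed, if $l$ separated neither pair, then $x$ and $y$ would lie in the same component of $P\setminus l$ or both on $l$, and likewise $y$ and $z$. Then $x$ and $z$ would be in the same component or both on $l$. This is exactly why the paper adopted the convention allowing points on $l$.

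So $c=c_1\cup c_2$, where $c_1$ separates $x,y$ and $c_2$ separates $y,z$. Both are subfamilies, and the properties "hyperbolically aligned" and "Reeb-separated" are pairwise conditions, so they pass to subsets. Hence $|c|\le d_H(x,y)+d_H(y,z)$, and taking the supremum gives the triangle inequality for $d_H$.

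For $d_\pm$ and $d_{R^\pm}$ the additive constant $1$ is handled as follows. If $y\in\{x,z\}$ the inequality is trivial. Otherwise
\[
1+|c|\le (1+|c_1|)+(1+|c_2|)-1,
\]
which is stronger than needed.

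The main obstacle is the finiteness step, particularly near prong singularities. There I need to check that a realized path's leaves still give the bound, possibly via the prong detour of Proposition~\ref{prop: X+ X- QI}.
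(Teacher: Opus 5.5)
Your proof is correct. For positivity, symmetry and the triangle inequality it is the paper's argument. Your explicit choice of $\cF^+(x)$, or $\cF^-(x)$ when $y\in\cF^+(x)$, spells out the paper's remark that distinct points are separated by some leaf, and the splitting $c=c_1\cup c_2$ is exactly what the paper does.

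The genuine difference is finiteness. The paper covers a path from $x$ to $y$ by finitely many trivially foliated rectangles and notes that no two members of an admissible family can cross the same rectangle. You instead realize a path of length $N$ in $X$ (for the Reeb metrics, a transverse path realizing a $\Gamma^\pm$-path). You then assign to each $l\in c$ a carrying leaf that it meets or equals. This assignment is injective because aligned leaves are disjoint and no third leaf meets both. In the Reeb case it is injective because two leaves on a single interval $[v_i,v_{i+1}]$ of $\Lambda^\pm$ are never Reeb-separated.

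What your route buys is a quantitative bound, e.g.\ $d_H(x,y)\le d_X(\cF^+(x),\cF^+(y))+1$. This is the bound of the plane metrics by graph distance that the paper only obtains in the next proposition, by the same polygonal-path reasoning. It also treats singular leaves uniformly, since injectivity uses nothing but the definition of alignment. So the worry in your last paragraph is unfounded: no prong detour is needed, and the argument you wrote is already complete.

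The price is that you lean on connectedness of $X$ and $\Gamma^\pm$. The paper uses this throughout but never proves it, and its natural proof is a finite chart-covering argument of exactly the kind the paper uses here. Your route is therefore more quantitative rather than more elementary.

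One trivial omission: in the triangle inequality for $d_\pm$ and $d_{R^\pm}$, set aside the case $x=z\neq y$ before writing $d(x,z)=1+\sup|c|$.
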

\begin{proof}
    By definition, $d_{R^\pm}(x,y)=0$ and $d_\pm(x,y)$ if and only if $x=y$. Since any two distinct points in $P$ are separated by at least one leaf of $\cF^+\cup \cF^-$, we also immediately get that $d_H(x,y)=0$ if and only if $x=y$.

Before proving the triangle inequality, we show that the functions are all finite: Given distinct points $x,y$, choose a path $\gamma$ in $P$ between $x$ and $y$. We further choose $\gamma$ so that it does not contain any prong singularities, except possibly $x$ and $y$. One can then cover $\gamma$ by finitely many trivially foliated rectangles. In particular, for any $c=\{l_1, \cdots l_k\}$ in $\cC_H$, $\cC_{H^\pm}$, or $\cC_{R^\pm}$ that separates $x$ and $y$, no two distinct $l_i,l_j$ can hit the same trivially foliated rectangle. So the number of elements in such a collection is bounded.

    Finally, to prove the triangle inequality, it is enough to notice that if $c=\{l_1, \cdots l_k\}$ separates $x,y$ and $c'=\{l_1', \cdots l_n'\}$ separates $y,z$ then any leaf $l_i$ either separates $x,z$ or separates $y,z$. In particular, $c$ splits into a union $c=c_1\cup c_2$ where $c_1$ separates $x,z$ and $c_2$ separates $y,z$. Similarly, $c' = c_1' \cup c_2'$ with $c_1'$ separating $x,y$ and $c_2'$ separating $x,z$.
    So, irrespectively of whether $c$ and $c'$ are in $\cC_H$, $\cC_{H^\pm}$, or $\cC_{R^\pm}$ we always have that there will be less element in a collection separating $x,z$ than the sum of the elements in a maximal collection separating $x,y$ and a maximal collection separating $y,z$.    
\end{proof}

\begin{prop}
    There exist quasi-isometries $f_H\colon (P, d_H) \to X$, $f_\pm\colon (P, d_\pm) \to X^\pm$ and $f_{R^\pm}\colon (P, d_{R^\pm}) \to \Gamma^\pm$.
\end{prop}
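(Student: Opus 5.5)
We define the maps by sending a point to a leaf through it: $f_\pm(x)=\cF^\pm(x)$, $f_H(x)=\cF^+(x)$ (any choice of a leaf through $x$ works up to distance $1$ in $X$), and $f_{R^\pm}(x)=\cF^\pm(x)$. Each map is surjective on vertices, since every leaf contains a point, so coarse surjectivity is immediate. It therefore suffices to prove two-sided coarse Lipschitz bounds comparing the separation-count metrics with graph distance. We treat $f_+$ in detail; $f_{R^+}$ and $f_H$ follow the same template.

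\textbf{Upper bound on the graph distance.} Let $c=\{l_1,\dots,l_k\}\in\cC_{H^+}$ separate $x,y$. Take any path $\gamma$ in $X^+$ from $\cF^+(x)$ to $\cF^+(y)$. Its projection $\pi_{\Lambda^+}(\gamma)$ contains $\llbracket \cF^+(x),\cF^+(y)\rrbracket$, which contains every $l_i$, possibly up to the endpoints. Each edge of $\gamma$ is realized by a segment of an $\cF^-$-leaf, whose $\cF^+$-saturation is an interval of leaves all crossed by a single $\cF^-$-leaf. Two hyperbolically aligned leaves cannot both be crossed by the same leaf, so each edge meets at most one $l_i$, and each $l_i$ other than possibly one at an endpoint must be crossed by some edge. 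This gives $|c|\le d_{X^+}+2$, and hence $d_+(x,y)\le d_{X^+}(f_+x,f_+y)+3$.

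\textbf{Lower bound on the graph distance.} Given $d_{X^+}(\cF^+(x),\cF^+(y))=D$, we construct a hyperbolically aligned family of size roughly $D/3$ separating $x,y$. Take a geodesic $v_0,\dots,v_D$ with $v_0=\cF^+(x)$ and $v_D=\cF^+(y)$. By Lemma \ref{lem:geodesics_vs_pseudo_interval}, each $v_i$ is within distance $2$ of some leaf $w_i\in\llbracket v_0,v_D\rrbracket$. Choose $w_{3j}$ for $j=1,\dots,\lfloor D/3\rfloor-1$. These satisfy $d_{X^+}(w_{3j},w_{3j'})\ge 3|j-j'|-4$, so after thinning by a bounded factor (taking every $6$th index) the distinct chosen leaves are at pairwise $X^+$-distance at least $2$. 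Distance at least $2$ in $X^+$ means no nonsingular $\cF^-$-leaf meets both leaves. To rule out singular $\cF^-$-leaves and $\cF^+$-leaves as well, use that all the $w$'s lie in $\llbracket v_0,v_D\rrbracket$. An $\cF^+$-leaf cannot meet two disjoint $\cF^+$-leaves, and the prong case is handled as in the proof of Proposition \ref{prop: X+ X- QI}, where a singular leaf meeting both forces $X^+$-distance at most $2$; requiring distance at least $3$ excludes this. Each leaf of the pseudo-interval separates $\cF^+(x)$ from $\cF^+(y)$, hence separates $x$ from $y$. This yields $d_+(x,y)\ge D/C - C$.

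\textbf{The remaining maps.} For $f_{R^\pm}$, the same argument applies with $\Gamma^\pm$. Reeb-separated pairs are exactly pairs at $\Gamma^\pm$-distance at least $2$, and paths in $\Gamma^\pm$ cross one block per edge. Conversely, points in distinct blocks of the decomposition of Theorem \ref{thm_pseudo_interval_decomp}, at least two blocks apart, are Reeb-separated. For $f_H$, combine Proposition \ref{prop: X+ X- QI} with the observation that a hyperbolically aligned family in $\cF^+\cup\cF^-$ separating $x,y$ has comparable size to one in $\cF^+$ alone. Each $\cF^-$-leaf $l$ in the family can be replaced by an $\cF^+$-leaf crossing $l$ and lying between its neighbors, at a cost of a bounded multiplicative constant.

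\textbf{Main obstacle.} We expect the hard step to be the lower bound, specifically producing genuinely separating, pairwise aligned leaves. This requires checking, using the non-Hausdorff tree structure, that leaves in the pseudo-interval which are far apart in $X^+$ admit no common crossing leaf of either foliation, including at singular leaves. The prong case should require the distance-$3$ margin rather than distance $2$.
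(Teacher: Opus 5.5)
\textbf{Gap: the $f_H$ step.} Since $\cC_{H^+}\subset\cC_H$, the bound $d_+\le d_H+1$ is trivial. The content is $d_H\lesssim d_+$, and you justify it by replacing each $\cF^-$-leaf $l$ of an aligned family with an $\cF^+$-leaf crossing $l$. This fails because no $\cF^+$-leaf crossing $l$ need separate $x$ from $y$. Take $P=\R^2\setminus(\{0\}\times(-\infty,0])$ with $\cF^+$ horizontal and $\cF^-$ vertical. The leaf $l=\{0\}\times(0,\infty)$ separates $x=(-1,-1)$ from $y=(1,-1)$, but the only $\cF^+$-leaves crossing $l$ are the lines $\R\times\{t\}$, $t>0$, and none of them separates these points.

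This is not absorbed by constants either. Consider the common $\cF^-$-side of two adjacent lozenges whose $\cF^+$-sides form a nonseparated pair of $\llbracket\cF^+(x),\cF^+(y)\rrbracket$. It separates $x$ from $y$, yet no leaf of that pseudo-interval crosses it. Such leaves taken at distinct nonseparated pairs form an aligned family whose size grows with the number of pairs.

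\textbf{Repair.} Run your own crossing argument in $X$ rather than $X^+$. Let $u_0=\cF^+(x),\dots,u_D=\cF^+(y)$ be a geodesic in $X$. Then $\bigcup_j u_j$ is connected and contains $x$ and $y$, so it meets every member of any $c\in\cC_H$ separating $x,y$. Each $u_j$ meets at most one member: it cannot intersect two of them, and it cannot coincide with one while intersecting another, since aligned leaves are disjoint. Hence $d_H(x,y)\le d_X(f_H(x),f_H(y))+1$. The reverse coarse bound follows from $d_X\le 2d_{X^+}$ (Proposition \ref{prop: X+ X- QI}), your estimate for $f_+$, and $d_+\le d_H+1$.

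\textbf{The rest.} The remainder of your proposal is sound. For $f_+$, your first inequality is the paper's argument. For $d_{X^+}\lesssim d_+$ you take a different route: you build an aligned family directly from a geodesic via Lemma \ref{lem:geodesics_vs_pseudo_interval}. The paper instead argues by maximality, showing that five geodesic vertices between consecutive $h_i$ would let one insert the middle vertex into the family. Your route makes explicit why the new leaves separate $x$ from $y$, namely that they lie on $\llbracket\cF^+(x),\cF^+(y)\rrbracket$; the paper's insertion step takes this for granted. The price is worse constants. You need indices at least $7$ apart to force $X^+$-distance at least $3$, which your prong step requires. The $f_{R^\pm}$ sketch is also fine.
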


The strategy for the proof of this result is essentially the same for all the different spaces. Since we have been working with $X^+$ for most of this article, we will do the proof in that case and leave as an exercise the adaptation to the other spaces.

\begin{proof}
    Define a map $f_+ \colon (P, d_+) \to X^+$ by associating to a point $x\in P$ the vertex in $X^+$ corresponding to the leaf $\cF^+(x)$.

    Let $x,y \in P$ and let $\gamma$ be a geodesic in $X^+$ connecting $v,w\in X^+$ where $f_+(x) = v, \, f_+(y)= w.$ Let $\gamma_P$ be a realization of $\gamma$ in $P$ (see Definition \ref{def_projection_maps}).
    Fix a collection of hyperbolically aligned leaves $\{h_1,h_2 \cdots h_n\}$ in $\cF^+$ separating $x,y$ and realizing the distance $d_+(x,y)$, i.e., such that $d_+(x,y)= n+1$. Since each $h_i$ disconnects $P$ into two path connected components, $\gamma_P$ must intersect each $h_i$. As in the proof of Lemma \ref{lem: blocks are long}, since no leaf of $\cF^-$ can intersect both $h_i$ and $h_{i+1}$, it implies that $\gamma$ must contain at least one vertex in between $h_i$ and $h_{i+1}$.
In particular, this implies that $|\gamma|=d_{X^+}(v,w) \geq n-1 = d_+(x,y) - 2$.

\begin{figure}[h]
    \centering
\labellist
\small

\pinlabel{\color{red} $x$} [t] at 67 34
\pinlabel{\color{red} $h_1$} [t] at 134 32
\pinlabel{\color{red} $h_2$} [t] at 228 34
\pinlabel{\color{red} $\dots$} [l] at 280 35
\pinlabel{\color{red} $h_n$} [t] at 364 34
\pinlabel{\color{red} $y$} [t] at 477 29

\pinlabel{\color{blue} $e_1$} [t] at 108 134
\pinlabel{\color{blue} $e_2$} [t] at 245 144
\pinlabel{\color{blue} $\dots$} [t] at 300 143
\pinlabel{\color{blue} $e_n$} [tr] at 445 135

\pinlabel{$\gamma_P$} at 103 160

\endlabellist

    \includegraphics[width=0.8\linewidth]{figures/dH_dX}
    \caption{The polygonal path $\gamma_P$ intersecting the set of hyperbolically aligned leaves.}
    \label{fig: dH and dX}
\end{figure}

Next we claim $d_{X^+}(v,w) \leq 5 d_+(x,y)$.  To prove this, suppose for contradiction there exists $i$ such that $\gamma$ has $5$ vertices, $v_1,\dots, v_5$ between $h_i$ and $h_{i+1}$, then no leaf of $\cF^-$ can intersect both $v_1$ and $v_3$ nor can there exists a leaf of $\cF^-$ intersecting both $v_3$ and $v_5$. Thus, $\{h_1, \dots, h_i, v_3, h_{i+1}, \dots h_n\}$ is an hyperbolically aligned set that separates $x$ and $y$, contradicting that $n= d_+(x,y)$.

\begin{figure}[h]
    \centering
\labellist
\small

\pinlabel{\color{red} $x$} [t] at 28 89
\pinlabel{\color{red} $h_i$} [t] at 95 65
\pinlabel{\color{red} $h_{i+1}$} [t] at 318 49
\pinlabel{\color{red} $y$} [t] at 385 85

\pinlabel{\color{red} $v_1$} [b] at 131 248
\pinlabel{\color{red} $v_2$} [b] at 163 271
\pinlabel{\color{red} $v_3$} [b] at 205 256
\pinlabel{\color{red} $v_4$} [b] at 245 274
\pinlabel{\color{red} $v_5$} [b] at 294 269

\pinlabel{\color{blue} $e_1$} [t] at 145 231
\pinlabel{\color{blue} $e_2$} [b] at 185 79
\pinlabel{\color{blue} $e_3$} [t] at 221 211
\pinlabel{\color{blue} $e_4$} [t] at 278 77

\endlabellist

    \includegraphics[width=0.8\linewidth]{figures/five_vertices}
    \caption{If 5 vertices $v_1, \dots, v_5$ are between $h_i, h_{i+1}$ then $\{h_i, v_3, h_{i+1} \}$ are hyperbolically aligned}
    \label{fig: five vertices}
\end{figure}

By a similar argument, $\gamma$ cannot have $3$ or more vertices between $x$ and $h_1$ or $h_n$ and $y$.
Hence, we deduce that $|\gamma|=d_{X^+}(v,w) \leq 5 (n + 1) = 5 d_+(x,y)$. Therefore, we showed that for all $x,y\in P$, 
\[
 d_+(x,y) - 2 \leq d_{X^+}(f_+(x), f_+(y)) \leq 5 d_+(x,y).
\]
As $f_+$ is surjective, we deduce that it is a quasi-isometry.

The maps $f_-$ and $f_{R^\pm}$ can be defined analogously as $f_+$ and the proof that they are quasi-isometries is almost identical. For the map $f_H\colon (P,d_H) \to X$, one can define it by taking $f_H(x)$ to be the $\cF^+$ leaf through $x$ (or the $\cF^-$-leaf), which is now coarsely surjective, and the proof is similar.
\end{proof}

\begin{rmk} Notice that the metrics $d_H, d_\pm, d_{R^\pm}$ are all \emph{roughly geodesic} metrics, i.e., there exists a constant $C$ such that every pair of points $x,y \in P$ are connected by a $(1,C)$-quasi-geodesic in each of the metrics $d_H, d_\pm, d_{R^\pm}$. To construct such a quasi-geodesic, it is enough to note that any path between $x$ and $y$ can be isotoped to be in a "maximally transverse" position in the sense that it is tangent to either foliations in at most finitely many points and that it intersect any leaf at most twice. One can show that such a path will automatically be a $(1,C)$-quasi-geodesic.

Rough geodesicity is an important property, as it is this class of metric spaces in which hyperbolicity is a quasi-isometry invariant.  
\end{rmk}

\section{Genericity of free elements and critical exponent} \label{sec: genericity}

In this section we deduce Theorem 
\ref{thm_generic_flows_version}, as well as Corollaries \ref{cor: random walks} and \ref{cor: typical geodesic}.

We in fact prove a more general version of Theorem \ref{thm_generic_flows_version} about Anosov-like actions:
\begin{thm} \label{thm_general_genericity} 
Let $G$ be a finitely generated group acting Anosov-like on $P$ with an extremal Smale class. Assume that $P$ is neither skew nor trivial.
Let $\cP(G)$ be the elements of $G$ admitting a fixed point in $P$ and $\cP(G)^c$ its complement. Then, for any word metric on $G$, the set $\cP(G)^c$ is generic. 

Furthermore, for any finite generating set $S$, there exists $S' \supset S$ for which the critical exponent of $\cP(G)^{c}$ is strictly larger than that of $\cP(G)$. If $G$ is hyperbolic, this holds with $S'=S$.
\end{thm}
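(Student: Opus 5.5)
We reduce the statement to known genericity results for groups acting on hyperbolic spaces with a WPD element. The key input is Theorem \ref{thm_generic_plane_version}(iii), via Propositions \ref{prop_axis} and \ref{prop:WPD}: since $P$ is neither skew nor trivial, the action of $G$ on the quasi-tree $X^+$ is nonelementary and contains a WPD loxodromic element. We then need a dictionary between the dynamics on $P$ and on $X^+$, which comes from the proof of Theorem \ref{thm_no_parabolics}. By Axiom \ref{Axiom_A1}, $g$ fixes a point of $P$ if and only if it fixes a vertex of $X^+$. In particular every element of $\cP(G)$ is elliptic on $X^+$, and every loxodromic element lies in $\cP(G)^c$. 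So it suffices to show that loxodromic elements are generic, and that the non-loxodromic set, which contains $\cP(G)$, has strictly smaller critical exponent.

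For the genericity statement, we apply the results of Choi \cite{Choi2024PseudoAnosovsGeneric,Choi2025acylindrically}. For a finitely generated group acting on a Gromov hyperbolic space with a WPD loxodromic element and nonelementary action, these give that, for any finite generating set, the proportion of loxodromic (indeed WPD) elements in the ball of radius $n$ tends to $1$. Since $X^+$ is a quasi-tree, hence hyperbolic, the hypotheses hold directly. We should check that the Choi results need only a nonelementary action with a WPD element, and not acylindricity; if they do need acylindricity, we pass through Osin's theorem to an acylindrical action on another hyperbolic space. Under that passage, WPD elements for $X^+$ remain loxodromic in the new space, and elements elliptic in $X^+$ stay elliptic. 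The genericity of loxodromics there still lands inside $\cP(G)^c$, because every element fixing a point of $P$ is elliptic in $X^+$ and therefore elliptic in the new space.

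For the critical exponent statement, we use Gekhtman--Taylor--Tiozzo \cite{GekhtmanTaylorTiozzo2022} and Choi's counting. In the hyperbolic group case, the set of elements that are not loxodromic on a hyperbolic space with a nonelementary WPD action has growth rate strictly less than that of $G$ with respect to any $S$. Since $\cP(G)^c$ contains all loxodromics, its critical exponent equals that of $G$, and $\cP(G)$ lies in the exponentially negligible complement. For general finitely generated $G$, the available results give the strict gap only after enlarging the generating set to some $S'\supset S$. The idea behind the enlargement is to ensure some kind of contracting, or Schottky-set, property for the word metric relative to the action. We would cite the precise version from Choi.

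The main obstacle is making sure the cited theorems apply verbatim: that they accept a non-proper, non-acylindrical action with a WPD element, that the set being counted matches ours, and, for the $S'$ statement, that the ``$\cP(G)$'' of the cited paper contains all elliptic elements. These points are bookkeeping, since we have already shown that there are no parabolic elements (Theorem \ref{thm_no_parabolics}).
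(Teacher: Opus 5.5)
Your proposal is correct and follows the paper's proof. Elements of $\cP(G)$ fix a vertex of $X^+$ and so are not loxodromic; Choi's Theorem 1.1 gives genericity of WPD loxodromics for every word metric; Choi's Theorem A gives the critical-exponent gap after enlarging to $S'$; and for hyperbolic $G$, Gekhtman--Taylor--Tiozzo supply thick bicombings for every generating set (their Lemma 8.1), so their Theorem 1.1(2) applies with $S'=S$. The Osin detour is unnecessary, since the cited Choi result applies directly to an action with a WPD element, and your claim that elements elliptic on $X^+$ stay elliptic in Osin's space would itself need justification.
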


\begin{proof}
By Theorem \ref{thm_generic_plane_version}, $G$ acts on a hyperbolic space $X$ with a loxodromic WPD element.
Since $G$ is assumed to be finitely generated, by \cite[Theorem 1.1]{Choi2025acylindrically}, we have for any generating set $S$,

$$\frac{|\{g \in G: g \text{ is a WPD loxodromic on }X\} \cap B_S(e,n) |}{|B_S(e,n)|}  \rightarrow 1.$$

In particular, as every loxodromic element is in $\cP(G)^c$, the set $\cP(G)^c$ is generic for any word metric.

The statement about critical exponents follows from \cite[Theorem A]{Choi2024PseudoAnosovsGeneric}, which states that the critical exponent of a subset with translation length bounded (in particular containing $\cP(G)$), is strictly smaller than the critical exponent of the group.

Now as hyperbolic groups admit a thick bicombing for every generating set \cite[Lemma 8.1]{GekhtmanTaylorTiozzo2022}, any hyperbolic group $G$ satisfies the assumption of \cite[Theorem 1.1-(2)]{GekhtmanTaylorTiozzo2022} for every generating set, which implies again that the critical exponent of $\cP(G)$ is strictly smaller than the critical exponent of the group.
\end{proof}

Finally, we prove corollaries \ref{cor: random walks} and \ref{cor: typical geodesic}.
\begin{proof}[Proof of Corollary \ref{cor: random walks}]
 As $G$ acts Anosov-like on a nonskew, nontrivial, bifoliated plane, $G$ must be countable \cite[Corollary 3.11]{Cam25}. Therefore, considering the action of $G$ on the hyperbolic graph $X^+$, \cite[Theorem 1.3]{MT18} implies that for every vertex $v\in X^+$ and almost every sample path $(g_n)$ in $G$, $(g_nv)$ converges to a point of the Gromov-boundary $\partial X^+$. This implies that the sequence of leaves $(g_nv)$, considered as a sequence of subsets in $P$, must escape every compact set in $P$, since the $\cF^+$-saturation of any compact set in $P$ projects to a bounded set in $X^+$.
As there are no infinite product region, we deduce that $(g_nv)$ converges to a point $\xi\in\partial P$, and in particular, for every $x_0\in v$, $g_nx_0\to \xi$, proving the corollary.
\end{proof}

\begin{proof}[Proof of Corollary \ref{cor: typical geodesic}]
Let $\nu$ be Patterson--Sullivan measure for the given generating set. Since $G$ is assumed to be hyperbolic, applying \cite[Theorem 1.2]{GTT18} to the action of $G$ on $X^+$ gives that for any $v\in X^+$, for $\nu$-almost every point $\eta\in \partial G$ and any geodesic ray $(g_n)$ converging to $\eta$, the sequence $(g_nv)$ converges to a point in $\partial X^+$. As in the previous proof, this implies that for any $x_0\in v$, $(g_nx_0)$ converges to a single point in $\partial P$.
\end{proof}

\subsection{Genericity of broken axes} Recall that the axis of any element is either homeomorphic to $\mathbb{R}$ (which can be properly embedded or not in the leaf space) or of the form $\cA(g) = \bigcup_i [x_i,y_i]$ with $y_i$ non-separated from $x_{i+1}$; in this latter case we call it a {\em broken axis}.  We next show that, under general assumptions, generic elements not only act freely but have broken axes: 

\begin{thm} \label{thm: genericity broken axis}
    Let $G \subset \Aut(P)$ be Anosov-like with an extremal Smale class, with $P$ neither trivial nor skew. Assume that either $P$ has no prongs, or that $P$ admits at least one pair of non-separated leaves and $G$ acts topologically transitively on $P$.
    Then the the set of free elements of $\cP(\phi)^c$ with broken axis is generic.
\end{thm}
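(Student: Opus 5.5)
The plan is to run the proof of Theorem \ref{thm_general_genericity} with $\Gamma^+$ in place of $X^+$. Recall that $\Gamma^+$ is Barbot's leaf-space graph, in which vertices are joined exactly when their pseudo-interval is a true interval. It is a quasi-tree, and $G$ acts on it by isometries. The key observation is that an element $g$ acting loxodromically on $\Gamma^+$ must act freely on $P$: otherwise, by \ref{Axiom_A1}, it fixes a leaf, hence a vertex of $\Gamma^+$. Moreover its axis $\cA(g)\subset\Lambda^+$ must be broken. Indeed, if $\cA(g)$ were homeomorphic to $\R$, then any two leaves on it would span a true interval, so they would be adjacent in $\Gamma^+$. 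The orbit of a leaf on the axis would then have diameter at most $1$, contradicting loxodromicity. Thus it suffices to show that $\Gamma^+$-loxodromic elements are generic for every word metric.

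For genericity I will invoke \cite[Theorem 1.1]{Choi2025acylindrically} exactly as in the proof of Theorem \ref{thm_general_genericity}. That result needs an isometric action on a hyperbolic space admitting a WPD loxodromic element. Hyperbolicity is provided by the quasi-tree property of $\Gamma^+$. The WPD element is provided by Claim \ref{claim: wpd for non separated metric}, once we have an element $g$ as in Proposition \ref{prop: free element condition}, Item \ref{it: non sep}, i.e.\ one whose axis is broken and is not contained in the projection of a chain of lozenges. It should also be checked that such a $g$ is loxodromic on $\Gamma^+$; the analogue of Lemma \ref{lem: blocks are long} shows that $d_{\Gamma^+}$ grows linearly with the number of blocks.

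The main obstacle is producing such a $g$ under the hypotheses, since Proposition \ref{prop: free element condition} may only give the prong case \ref{it: prong}. I would split into the two hypotheses.

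\textbf{No prongs.} Theorem \ref{thm:trichotomy} says that $\Lambda^+$ is non-Hausdorff. In the proof of Proposition \ref{prop: free element condition}, the leaf $l\subset\partial\cF^+(\cF^-(a))$ separating $\cF^+(a)$ from $\cF^+(b)$ cannot be singular. Hence case \ref{item nonseparated case} occurs, and the element built there has a broken axis.

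\textbf{Transitive action with at least one nonseparated pair $l_1,l_2$.} I would choose the non-corner fixed points $a,b$ of Proposition \ref{prop extremal smale} so that the nonseparated pair lies between them. By Proposition \ref{prop:nonseparated_on_chain} and \ref{Axiom_nonseparated}, the pair lies on sides of an $h$-invariant chain of lozenges. Transitivity together with \ref{Axiom_dense} gives density of non-corner fixed points. So I would pick a non-corner fixed point $a$ close to a point of $l_1$ on the side away from $l_2$, and a non-corner fixed point $b$ close to a point of $l_2$. These can be arranged so that $\cF^-(a)$ crosses $l_1$, $\cF^-(b)$ crosses $l_2$, and the points $a,b$ are partially linked with $\cF^+(a)\cap\cF^-(b)\neq\emptyset$. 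This uses the perfect fit between $l_1$ and the $\cF^-$-leaf separating $l_1$ from $l_2$.

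Then the separating leaf of $\partial S$ is part of a nonseparated family, so case \ref{item nonseparated case} of the proof of Proposition \ref{prop: free element condition} applies, and the resulting $g_{m,k}$ has a broken axis satisfying Item \ref{it: not chain}. Arranging partial linking in a chosen position is the delicate step. I would first try to realize it by taking $a$ and $b$ inside small product rectangles adjacent to $l_1$ and $l_2$ respectively, on the appropriate sides of the perfect fits.
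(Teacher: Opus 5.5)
Your overall route is the paper's. $\Gamma^+$-loxodromic elements act freely and have broken axes, and Claim \ref{claim: wpd for non separated metric} together with Choi's genericity theorem reduces everything to producing one element as in item \ref{it: non sep} of Proposition \ref{prop: free element condition}. Without prongs, case \ref{item nonseparated case} of that proof is forced. The problem is the configuration you propose in the transitive case: as stated, it cannot exist.

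Since $l_1$ and $l_2$ are nonseparated, the $\cF^-$-leaf you mention separates $l_1$ from $l_2$. So no leaf of $\cF^-$ meets both $l_1$ and $l_2$; this is the same fact used in Lemma \ref{lem: blocks are long}.

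\begin{itemize}
\item If $a$ lies in the component of $P\smallsetminus l_1$ not containing $l_2$, then so does the whole leaf $\cF^+(a)$.
\item A leaf $\cF^-(b)$ that crosses $l_2$ and meets $\cF^+(a)$ would therefore also have to cross $l_1$.
\item Hence your requirements ``$\cF^-(b)$ crosses $l_2$'' and ``$\cF^+(a)\cap\cF^-(b)\neq\emptyset$'' are incompatible.
\end{itemize}

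More precisely, there are two sub-cases depending on where $b$ sits.
\begin{itemize}
\item If $b$ is on the side of $l_2$ away from $l_1$, then both $l_1$ and $l_2$ separate $a$ from $b$. The same argument gives $\cF^-(a)\cap\cF^+(b)=\emptyset$, so $a,b$ are not even partially linked and Lemma \ref{lem: non corner partially linked} does not apply.
\item If $b$ is on the side of $l_2$ facing $l_1$, the pair is partially linked, but with $\cF^-(a)\cap\cF^+(b)\neq\emptyset$, i.e.\ with the roles of $a$ and $b$ reversed.
\end{itemize}
This is exactly the step you flagged as delicate, and the side you chose for $a$ is the wrong one.

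The correct arrangement is the paper's: exactly one of $l_1,l_2$ separates $a$ from $b$. Moreover, the point lying on the side where leaves accumulate onto $l_1\cup l_2$ must be the one whose $\cF^+$-leaf meets the other point's $\cF^-$-leaf. Here is one way to realize it.
\begin{itemize}
\item Take a non-corner fixed point $b$ near a point of $l_2$, on the side away from $l_1$, so that $\cF^-(b)$ crosses $l_2$.
\item Since $l_1,l_2$ are nonseparated, there is $m$ with $(l_1,m)=(l_2,m)$ in $\Lambda^+$ and $(l_2,m)$ contained in the set of leaves crossed by $\cF^-(b)$. Hence every leaf close enough to $l_1$ on the side facing $l_2$ is crossed by $\cF^-(b)$.
\item Now take a non-corner fixed point $a$ close to $l_1$ on the side facing $l_2$. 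Then $\cF^-(a)$ crosses $l_1$ and $\cF^+(a)\cap\cF^-(b)\neq\emptyset$.
\item Also $\cF^-(a)\cap\cF^+(b)=\emptyset$, since $\cF^-(a)$ cannot cross $l_2$.
\item The leaf of $\partial\cF^+(\cF^-(a))$ separating $\cF^+(a)$ from $\cF^+(b)$ is then $l_2$. So case \ref{item nonseparated case} applies, and the rest of your argument goes through unchanged.
\end{itemize}

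Separately, note that \ref{Axiom_dense} alone only gives density of fixed \emph{leaves}. Density of non-corner fixed points needs two further inputs: transitivity, to get dense fixed points, and the fact that $P$ is neither trivial nor skew, to get non-corner ones. The paper cites results for both.
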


\begin{rmk}
    In fact, elements with broken axis are generic as soon as $P$ admits at least one pair of non-separated leaves.  However, the proof of this stronger fact requires more work and the import of many more results from \cite{barthelmeNONTRANSITIVEPSEUDOANOSOVFLOWS}, which is why we only explain the transitive case here.
\end{rmk}

\begin{proof}
Let $\Gamma^+$ be the graph constructed in Definition \ref{def_broken_axis_graph}. Note that by definition, \emph{every} loxodromic element for the action on $\Gamma^+$ must have a broken axis.

By Claim \ref{claim: wpd for non separated metric}, any element $g\in G$ that satisfies item \ref{it: non sep} of Proposition \ref{prop: free element condition} is a WPD element on $\Gamma^+$. So, as soon as we can prove the existence of one such element, \cite[Theorem 1.1]{Choi2025acylindrically} gives genericity of those elements for any finite generating set.

If $P$ has no prong singularities, then every element constructed in Proposition \ref{prop: free element condition} has a broken axis, so there is nothing more to prove in that case.

Now assume that $P$ admits at least one pair of non-separated leaves $l_1,l_2$. Since $G$ acts topologically transitively on $P$, the set of points fixed by non-trivial elements of $G$ is dense in $P$ (see \cite[Theorem 2.9.2]{barthelmePseudoAnosovFlowsPlane2026}), and as $P$ is neither trivial nor skew, the set of \emph{non-corner} fixed points is also dense (see \cite[Lemma 2.30]{barthelmeOrbitEquivalencesPseudoAnosov2022}). Using this, it is immediate that one can find non-corner fixed points $a,b$ in the same configuration as in the left-hand side of Figure \ref{fig: broken and prongs}, i.e., such that exactly one of the two non-separated leaves $l_1,l_2$ separates $a$ from $b$. Therefore, the proof of Proposition \ref{prop: free element condition} give an element with a broken axis for this choice of $a$ and $b$, and the result follows.
\end{proof}

\subsection{Special cases: trivial and skew planes}\label{subsec: skew and trivial}

We end this article by discussing the two cases from the trichotomy, namely trivial and
skew planes, where the action on the hyperbolic graph $X^+$ fails to be non-elementary. In particular, we will show that the conclusion of Theorem~\ref{thm_generic_plane_version}
cannot be extended to the $\R$-covered setting.

\subsubsection*{The trivial plane}

We first treat the case of a trivial plane. Recall that in the flow setting this corresponds to the flow being a suspension of an Anosov diffeomorphism of the torus. The situation is very different in this setting from the conclusion of Theorem \ref{thm_generic_plane_version}: 

\begin{prop}
\label{prop: trivial case}
Let $G$ be a finitely generated group acting Anosov-like on the trivial plane $P_{\mathrm{triv}}$.
Then the set of elements acting with a fixed point is generic in $G$ and the set of free elements has polynomial growth.
\end{prop}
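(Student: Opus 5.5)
The plan is to identify the structure of $G$ completely and then count. On the trivial plane $P_{\mathrm{triv}} = \R^2$ with the coordinate foliations, every leaf meets every leaf of the other foliation. So $\cF^+$ and $\cF^-$ have leaf spaces $\Lambda^\pm \cong \R$, and $\Aut(P_{\mathrm{triv}})$ embeds in $\mathrm{Homeo}(\R)^2 \rtimes \mathbb{Z}/2$. There are no prongs, no nonseparated leaves and no lozenges: a perfect fit would require two leaves that do not intersect.

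\textbf{Step 1: structure of $G$.} Let $G_0$ be the finite-index subgroup preserving each foliation and its transverse orientations. Axiom \ref{Axiom_A1} says a nontrivial $g \in G_0$ fixing a leaf has a fixed point of saddle type. By Theorem \ref{thm_fixed_connected}, it has a unique fixed point, since there are no lozenges. Hence a nontrivial element of $G_0$ has at most one fixed leaf in each $\Lambda^\pm$.

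I would then show that the subgroup $H \subset G_0$ acting freely on $P$, together with the identity, is a normal abelian subgroup. The intended picture is the suspension case, with $G \cong \mathbb{Z}^2 \rtimes_A \mathbb{Z}$ up to finite index and $H = \mathbb{Z}^2$. The route is as follows. Elements of $G_0$ acting freely on $\Lambda^+ \cong \R$ form a group of homeomorphisms of $\R$ in which every nontrivial element has at most one fixed point. By the density axiom \ref{Axiom_dense}, fixed points of elements of $G$ are dense. I would use Solodov's theorem (or the classical Hölder/Barbot argument used in the trichotomy proof \cite{barbotCharacterizationAnosovFlows1995}) to conclude that the $G_0$-action on $\Lambda^+$ is conjugate to an affine action. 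Under this conjugacy, freely acting elements are translations, and elements with a fixed point are nontrivial homotheties. Doing the same on $\Lambda^-$, I get a homomorphism $\lambda \colon G_0 \to \R_{>0}$ recording the expansion factor on $\Lambda^+$. Its kernel is exactly the set of elements of $G_0$ acting freely on $P$: a translation on $\Lambda^+$ fixes no leaf, so it fixes no point. This kernel $H$ injects into $\R^2$ via the pair of translation numbers, so it is free abelian. It has rank at most $2$, since it is discrete: otherwise one gets non-dense orbits or contradicts finite generation combined with density of fixed points. The image of $\lambda$ is a nontrivial finitely generated subgroup of $\R_{>0}$. Normality of $H$ and the action by conjugation then force $G_0 \cong H \rtimes \lambda(G_0)$ with $\lambda(G_0) \cong \mathbb{Z}$. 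I expect $H \cong \mathbb{Z}^2$ and a hyperbolic matrix action; I do not expect to need exactly this, only that $G$ is virtually $\mathbb{Z}^k \rtimes \mathbb{Z}$ with exponential growth coming from the $\mathbb{Z}$ factor. If $\lambda(G_0)$ had rank larger than $1$, I would derive a contradiction with the unique-fixed-point property via commutators.

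\textbf{Step 2: counting.} Free elements lie in the union of the cosets of $G_0$ in $G$ that meet $H$; elements outside $G_0$ are handled by passing to powers and checking that a free element is in $H$ after finitely many cosets. So it suffices to bound $|H \cap B_S(e,n)|$. Since $H$ is a finitely generated abelian subgroup, its intersection with $B_S(e,n)$ lies in a ball of word length at most $Cn$ in $H$ with respect to any generating set of $H$. Hence it has polynomial size, at most $C' n^{2}$. Meanwhile $G$ has exponential growth: a hyperbolic automorphism of $\mathbb{Z}^2$ produces a free subsemigroup, and Margulis' theorem gives the same in the flow case. Thus the complement of $H$, which is the set of elements with a fixed point, has density tending to $1$ in the balls, so it is generic.

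The main obstacle is Step 1: rigorously getting the affine or abelian structure purely from the Anosov-like axioms, rather than from the $3$-manifold setting. This is where I would lean hardest on the existing literature for Anosov-like actions on $\R$-covered planes, namely the trivial-case part of the proof of Theorem \ref{thm:trichotomy}.
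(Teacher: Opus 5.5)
Your outline is the same as the paper's: affine structure of $G$ (the paper quotes Proposition 2.3.2 of \cite{barthelmePseudoAnosovFlowsPlane2026} instead of re-deriving it via Solodov), free elements $=$ nontrivial translations, ``abelian hence polynomial growth'', and exponential growth of $G$. The genuine gap is in Step 2, and it is the same leap as the paper's sentence ``$A$ is abelian and hence has polynomial growth''.

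You assert that $H\cap B_S(e,n)$ lies in a ball of radius $Cn$ for a word metric on $H$. That is undistortedness of $H$ in $G$, whereas in general only $|h|_S\le C|h|_H$ holds. It already fails in the model case. Take $A=\begin{pmatrix}2&1\\1&1\end{pmatrix}$ and $G=\mathbb{Z}^2\rtimes_A\mathbb{Z}=\langle a,b,t\rangle$ with $tvt^{-1}=Av$; this is the suspension flow, whose orbit space is the trivial plane. For $\epsilon\in\{0,1\}^{n+1}$, the word $a^{\epsilon_0}ta^{\epsilon_1}t\cdots ta^{\epsilon_n}t^{-n}$ has length at most $3n+1$ and represents the translation by $\sum_i\epsilon_iA^ie_1$. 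Pairing with the expanding eigenvector, whose eigenvalue $(3+\sqrt5)/2$ exceeds $2$, shows these elements are pairwise distinct, and every nonzero one acts freely. Hence $|\mathrm{Free}\cap B_S(3n+1)|\ge 2^{n+1}-1$. So the polynomial bound is false, and genericity cannot come from comparing polynomial with exponential growth.

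What would actually be needed is $|A\cap B_S(n)|=o(|B_S(n)|)$, for instance a strictly smaller exponential growth rate for the translation kernel. This is a genuine counting statement, not a formal consequence of exponential growth of $G$: in $F_2\times\mathbb{Z}$ with standard generators, the kernel of the projection to $\mathbb{Z}$ fills about half of every ball.

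Step 1 also claims more than the axioms give. Discreteness and rank at most $2$ of $H$, and cyclicity of $\lambda(G_0)$, fail for general finitely generated Anosov-like $G$. For example, take transcendental $\alpha>1>\beta>0$, $\phi(x,y)=(\alpha x,\beta y)$ and $\tau(x,y)=(x+1,y+1)$. Then $\langle\phi,\tau\rangle\cong\mathbb{Z}\wr\mathbb{Z}$, with translation subgroup $\{(p(\alpha),p(\beta)) : p\in\mathbb{Z}[s^{\pm1}]\}$. A nontrivial translation has both coordinates nonzero, so it fixes no leaf. Every other element is a saddle with a unique fixed point, and fixed leaves are dense, so (A1)--(A4) hold. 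Similarly, using $(x,y)\mapsto(\alpha x,\alpha^{-1}y)$ and $(x,y)\mapsto(\gamma x,\gamma^{-1}y)$ with $\alpha,\gamma$ algebraically independent gives linear part $\mathbb{Z}^2$. In such examples $H$ is not finitely generated, so your Step 2 breaks down even before distortion enters. Your source of exponential growth, a hyperbolic automorphism of $\mathbb{Z}^2$, is also unavailable there. The paper's ping-pong between two saddle elements with distinct fixed leaves is what works in general, and the paper needs no rank information, since it only uses that free elements form the translation kernel.
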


\begin{proof}
By \cite[Proposition 2.3.2]{barthelmePseudoAnosovFlowsPlane2026}, if $G$ acts Anosov-like on the trivial plane, then $G$ is conjugate to a subgroup of affine transformations, and thus splits as a sequence $A \to G \to H$ where $H$ is the linear part of $G$ under the derivative map to $\mathrm{GL}(n, \mathbb{R})$ and the kernel $A$ is the translational part.  The set of elements acting freely is precisely $A$, which is abelian and hence has polynomial growth.  By contrast, the Anosov-like axiom \ref{Axiom_dense} implies that $G$ contains hyperbolic linear elements with distinct $x$ and $y$ coordinates for their fixed points; it is easy to verify that (up to switching one such element with its inverse), these generate a free semi-group, hence $G$ itself has exponential growth. 
\end{proof}

\subsubsection*{The skew plane}

In the case of an Anosov-like action on the skew plane, while we cannot prove genericity of elements acting freely, we prove that they are still a large subset: 
\begin{thm}
\label{thm: skew free maximal exponent}
Let $G$ be a finitely generated group acting Anosov-like on the skew plane $P_{\mathrm{skew}}$.
Then for any finite generating set $S$ on $G$, the set $\mathrm{Free}$ of elements acting freely
has maximal critical exponent:
\[
\lambda_S(\mathrm{Free})=\lambda_S(G),
\]
Moreover,
\[
\limsup_{n\to\infty}\frac{|\mathrm{Free}\cap B_S(n)|}{|B_S(n)|}>0.
\]
\end{thm}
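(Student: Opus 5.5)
The plan is to exploit the structure of Anosov-like actions on the skew plane and pass to a quasi-morphism or homomorphism to $\mathbb{Z}$ measuring translation along the quasi-line $X^+$. For the skew plane, $\Lambda^+\cong\mathbb{R}$, and the skew structure gives a $G$-equivariant homeomorphism $\tau\colon\Lambda^+\to\Lambda^+$ (sending a leaf $l$ to $s^+$ of the leaves meeting it, twice). This $\tau$ is a fixed-point-free translation commuting with $G$. So $G$ acts on $\mathbb{R}$ commuting with a translation. First I would reduce to the orientation-preserving subgroup $G_0$ of finite index (index $\le 4$), and use the rotation/translation number with respect to $\tau$. This gives a homogeneous quasi-morphism $\rho\colon G_0\to\mathbb{R}$, defined by $\rho(g)=\lim_n k_n/n$, where $g^n(l)$ lies between $\tau^{k_n}(l)$ and $\tau^{k_n+1}(l)$.

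The key steps are as follows. First, an element $g$ with a fixed point in $P$ fixes a leaf of $\Lambda^+$, so $\rho(g)=0$. Conversely, $\rho(g)\neq 0$ implies $g$ acts freely. So $\cP(G)\cap G_0\subset\rho^{-1}(0)$, a set of bounded translation length in $X^+$. Second, I would show $\rho$ is unbounded, indeed nonzero on some element. Axiom \ref{Axiom_dense} gives fixed points, and composing two elements fixing points in distinct lozenge-chains should give positive translation. Alternatively, the quasi-line structure itself suffices: the action of $G$ on $X^+\simeq\mathbb{R}$ is cobounded, since fixed leaves are dense, so there must be loxodromics. Third, I would invoke a counting result. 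For a group acting on a hyperbolic space with a loxodromic, \cite[Theorem A]{Choi2024PseudoAnosovsGeneric} should give that the set of elements of bounded translation length has critical exponent at most that of $G$, which yields $\lambda_S(\mathrm{Free})=\lambda_S(G)$. A more direct route: pick $f$ with $\rho(f)>0$. Every $h\in B_S(n)$ with $\rho(h)$ near $0$ yields $hf^{N}$, which is free since the quasi-morphism defect is bounded, and has word length $\le n+C$. Since $h\mapsto hf^N$ is injective, $|\mathrm{Free}\cap B_S(n+C)|\ge|\cP(G)\cap G_0\cap B_S(n)|$. Then either the fixed-point elements occupy at most half of $B_S(n)$, or the free elements in $B_S(n+C)$ number at least $c|B_S(n)|$. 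In both cases $|\mathrm{Free}\cap B_S(n+C)|\ge c'|B_S(n)|$, after handling the finite-index coset issue by multiplying by coset representatives.

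The density statement then follows. It needs $\limsup |B_S(n)|/|B_S(n+C)|>0$, and this holds because otherwise the ratios along a subsequence tend to $0$, contradicting that a group has at most exponential growth. More precisely, $\limsup_n |B_S(n)|/|B_S(n+C)|\ge |S|^{-C}>0$ holds trivially since $|B_S(n+C)|\le |B_S(n)|(2|S|+1)^C$. That gives a positive lower bound for $|\mathrm{Free}\cap B_S(m)|/|B_S(m)|$ along all $m$, hence a positive limsup. The critical exponent equality is immediate from the same inequality.

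The main obstacle is step two combined with the defect control. I must show that $\rho(hf^N)\neq 0$ whenever $h$ fixes a point, and here homogeneity is not enough since $\rho$ is only a quasi-morphism. I would instead work directly with the displacement: $h$ fixes a leaf $l_h$, and $f^N$ moves every leaf past $\tau^{2}$ of itself when $N$ is large, uniformly, because $f$ commutes with $\tau$ and translates. Then $hf^N(l)>\tau(l)$ for all $l$, using that $h$ preserves the $\tau$-ordering and $h(\tau^k l)=\tau^k h(l)$, with $h$ moving points less than one $\tau$-step since $\rho(h)=0$ and $h$ has a fixed point. This forces freeness.
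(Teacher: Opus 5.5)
\textbf{The gap: Step~2.} The step you do not actually establish is Step~2, the existence of a free element $f\in G_0$ (i.e.\ $\rho(f)\neq 0$), and everything else rests on it.

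Your first justification is that the action on $X^+$ is cobounded ``since fixed leaves are dense''. This is a non sequitur: density of fixed leaves concerns many different elements and says nothing about the orbit of a single leaf. It is also circular, because for $G_0$ coboundedness is equivalent to what you want. If every element of $G_0$ fixed a leaf, then, commuting with $\tau$, each would move every leaf by less than one $\tau$-step. So every orbit $G_0 l$ would lie in $(\tau^{-1}l,\tau l)$ and be bounded in $X^+$.

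Your second suggestion, composing elements that fix points in distinct chains, is only a heuristic. A product of two elements with fixed points may well have a fixed point. Getting one that does not requires an actual ping-pong argument with a suitably interleaved configuration of fixed leaves, which you have not produced. The paper does not prove this step either: it imports it from \cite[Lemma 2.2]{barthelmeOrbitEquivalencesMathbb2023}. You should do the same or give a genuine argument.

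\textbf{Comparison with the paper.} Granting such an $f$, your ``direct route'' is the paper's proof. There one takes a free $f$ with $f(x)>x+1$ and observes that $fg$ (equivalently its conjugate $gf$) is free whenever $g$ has a fixed point. One then applies Lemma~\ref{lem: finite translate same exponent} with $F=\{1,f\}$, together with $|B_S(n+R)|\le L|B_S(n)|$. Your displacement argument for $hf^N$, the injectivity count and the density estimate are the same. Your worry about the defect is unfounded: $\rho(h)=0$ already gives $\rho(hf^N)\ge N\rho(f)-D>0$ for large $N$.

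Your reduction to $G_0$ with coset representatives is not cosmetic. The paper's step ``$g$ has a fixed point, so $fg$ is free'' does not cover elements $g$ that reverse the orientation of $\Lambda^+\cong\R$: then $fg$ is decreasing and hence has a fixed point. Such elements do occur, e.g.\ for the geodesic flow of a non-orientable hyperbolic surface. One then needs something like $F=\{1,f,k,kf\}$ with $k\notin G_0$, as you indicate.

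Finally, drop the appeal to \cite[Theorem A]{Choi2024PseudoAnosovsGeneric}. That result concerns non-elementary actions, whereas the action on the quasi-line $X^+$ is elementary. The inequality you quote (``at most'' the critical exponent of $G$) is trivial and says nothing about $\mathrm{Free}$.
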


In order to prove the theorem, we will use the two following lemmas about growth in groups.

\begin{lem}
\label{lem: ball comparison}
Let $G$ be a finitely generated group, $S$ a finite generating set, and let $k \geq 0$.
Then there exists a constant $L \geq 1$ such that for all $n \in \mathbb N$,
\[
|B_S(n+k)| \leq L\, |B_S(n)|.
\]
\end{lem}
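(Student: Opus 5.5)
The plan is to show that every element of the ball $B_S(n+k)$ can be written as a product $gs$, with $g \in B_S(n)$ and $s \in B_S(k)$. Once that holds, the constant can be taken to be $L = |B_S(k)|$. This constant is finite because $S$ is finite, and it is at least $1$ because $B_S(k)$ contains the identity.

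Here are the steps. Let $w \in B_S(n+k)$, so that $|w|_S \le n+k$. Fix a geodesic word $w = s_1 s_2 \cdots s_m$ with $s_i \in S \cup S^{-1}$ and $m \le n+k$.

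\emph{Case $m \le n$.} Then $w \in B_S(n)$ already, and we may write $w = w \cdot e$ with $e \in B_S(k)$.

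\emph{Case $m > n$.} Split the word as $g = s_1\cdots s_n$ and $s = s_{n+1}\cdots s_m$. Then $|g|_S \le n$, and $s$ has length at most $m-n \le k$.

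In both cases the product map $B_S(n)\times B_S(k) \to G$, $(g,s)\mapsto gs$, has image containing $B_S(n+k)$. Therefore
\[
|B_S(n+k)| \le |B_S(n)|\,|B_S(k)|.
\]

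There is no genuine obstacle. The only point that needs care is that $L$ must be independent of $n$, and $|B_S(k)|$ depends only on $k$ and $S$. If the paper uses the convention of symmetric generating sets, the balls are taken with respect to $S\cup S^{-1}$, and the same bound holds with $L = |B_S(k)|$.
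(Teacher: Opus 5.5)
Your proof is correct. It is essentially the paper's word-splitting argument done in one step: the paper iterates the one-step bound $|B_S(n+1)|\le 2|S|\,|B_S(n)|$ $k$ times to get $L=(2|S|)^k$, while your inclusion $B_S(n+k)\subseteq B_S(n)\,B_S(k)$ gives the constant $L=|B_S(k)|$ directly, which is at least as good.
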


\begin{proof}
We iterate the estimate
$|B_S(n+1)| \leq 2|S|\,|B_S(n)|$ and take $L=(2|S|)^k$.
\end{proof}

\begin{lem}
\label{lem: finite translate same exponent}
Let $G$ be a finitely generated group, $S$ a finite generating set, and let $A\subset G$.
Assume that there exists a finite set $F\subset G$ such that for every $g\in G$,
$gF\cap A \neq \emptyset.$
Then
$\lambda_S(A)=\lambda_S(G).$
\end{lem}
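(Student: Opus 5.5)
The plan is to compare the growth series of $A$ and of $G$ directly, using only the finite set $F$ and Lemma \ref{lem: ball comparison}. Recall that the critical exponent of a subset $A$ is
\[
\lambda_S(A)=\limsup_{n\to\infty}\frac{1}{n}\log|A\cap B_S(n)|.
\]
Since $A\subset G$, we have $\lambda_S(A)\le\lambda_S(G)$ trivially. So only the reverse inequality needs proof.

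Let $k=\max_{f\in F}|f|_S$, which is finite because $F$ is finite. By hypothesis, for each $g\in B_S(n)$ we can choose some $f_g\in F$ with $gf_g\in A$. This defines a map
\[
\Phi\colon B_S(n)\to A\cap B_S(n+k),\qquad g\mapsto gf_g.
\]
The bound $|gf_g|_S\le |g|_S+k$ shows that $\Phi$ lands in $B_S(n+k)$. The fibres of $\Phi$ have cardinality at most $|F|$: if $\Phi(g)=a$, then $g=af^{-1}$ for some $f\in F$, so there are at most $|F|$ choices. Consequently
\[
|B_S(n)|\le |F|\,|A\cap B_S(n+k)|.
\]
Taking $\frac1n\log$ and then $\limsup$, the constant $|F|$ disappears and the shift by $k$ does not change the exponential rate. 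More precisely, $\frac{1}{n}\log|A\cap B_S(n+k)|=\frac{n+k}{n}\cdot\frac{1}{n+k}\log|A\cap B_S(n+k)|$, and $\frac{n+k}{n}\to 1$. Hence $\lambda_S(G)\le\lambda_S(A)$.

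If the paper defines the critical exponent instead through convergence of the Poincaré series $\sum_{a\in A}e^{-s|a|_S}$, the same map handles it. It gives
\[
\sum_{g\in G}e^{-s|g|_S}\le |F|\,e^{sk}\sum_{a\in A}e^{-s|a|_S},
\]
so the two series converge for the same values of $s$. Lemma \ref{lem: ball comparison} is not needed for this statement; it becomes relevant for the positive-density part of Theorem \ref{thm: skew free maximal exponent}, via $|B_S(n+k)|\le L|B_S(n)|$.

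No step is a real obstacle. The only care needed is in bounding the fibres of $\Phi$ and in checking that the $\limsup$ is insensitive to the additive shift $k$.
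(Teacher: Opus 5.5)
Your proof is correct and follows the paper's argument: the map $g\mapsto gf_g$ has bounded fibres, which gives $|B_S(n)|\le C\,|A\cap B_S(n+k)|$, and one then compares exponential growth rates. The differences are cosmetic. You bound the fibres by $|F|$ rather than the paper's $|B_S(k)|$. You also absorb the index shift via $(n+k)/n\to 1$ instead of invoking Lemma~\ref{lem: ball comparison}, which is indeed not needed here.
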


\begin{proof} Let $k:= \max \{ |f|_S: f \in F \}$ and let $R := | B_S(k) |$ be the size of a ball of radius $k.$ 
For every $g\in B_S(n)$ there exists $f\in F$ such that $gf\in A$, and we have
$|gf|_S\le n+k$. Hence
$|B_S(n)| \le R\,|A\cap B_S(n+k)|$.
By Lemma~\ref{lem: ball comparison}, there exists $L\ge1$ such that
$|B_S(n+k)| \le L\,|B_S(n)|.$
Hence, combining the two equations we get $$|B_S(n+k)| \leq LR|B_S(n+k) \cap A|.$$ Taking the log of both sides and dividing by $n+k$ gives 

$$\frac{\ln|B_S(n+k)|}{n+k} \leq \frac{\ln|LR|}{n+k}+\frac{\ln |A \cap B_S(n+k)|}{n+k}.$$ 

Passing to the limsup on both sides yields
$\lambda_S(G)\le \lambda_S(A)$.
The reverse inequality is automatic, so equality holds.
\end{proof}

\begin{proof}[Proof of Theorem \ref{thm: skew free maximal exponent}]
An Anosov-like action on the skew plane gives an identification of $G$ with a subgroup of $\mathrm{Homeo}_{\mathbb Z}(\mathbb R)$, the group of homeomorphisms of $\R$ which commute or anti-commute with integer translations (see \cite[Section 2.3]{barthelmePseudoAnosovFlowsPlane2026}).

Choose an element $h\in G$ acting freely on $\R$; such an element exists by
\cite[Lemma 2.2]{barthelmeOrbitEquivalencesMathbb2023}. Replacing $h$ by a power, we may assume that
$h(x)>x+1$ for all $x \in \R$.
Now let $g\in G$. Either $g$ acts freely, or
$g$ has a fixed point $x\in\mathbb R$ and then
$hg(x)=h(x)>x+1$,
so $hg$ acts freely.

We have shown that for every $g\in G$, at least one of $g$ or $hg$ belongs to
$\mathrm{Free}$. Equivalently,
\[
g\{1,h\}\cap \mathrm{Free}\neq\emptyset \qquad \text{for all } g\in G.
\]
Applying Lemma~\ref{lem: finite translate same exponent} with $F=\{1,h\}$ gives
$\lambda_S(\mathrm{Free})=\lambda_S(G)$ for any generating set $S$.

To obtain the second part, let $R=|h|_S$.
Denote $K=|B_S(R)|$. For every $g\in B_S(n)$, either $g\in \mathrm{Free}$ or $hg\in \mathrm{Free}\cap B_S(n+R)$.
Hence
\[
|B_S(n)| \le K \, . \, | \mathrm{Free}\cap B_S(n+R) |
\]
Using Lemma~\ref{lem: ball comparison}, there exists $L\ge 1$ such that
$|B_S(n+R)|\le L\,|B_S(n)|$ for each $n \in \N$.
Therefore
\[
\frac{|\,\mathrm{Free}\cap B_S(n+R)\,|}{|B_S(n+R)|}
\ge \frac{1}{LK},
\]
and taking the limsup gives the conclusion.
\end{proof}

\begin{rmk}
Unlike in the non-$\R$-covered case, we do \emph{not} prove that the
set of elements acting freely is generic, nor that the set of elements with fixed points
has smaller critical exponent. In fact, in the case of a geodesic flow on a hyperbolic surface, if we consider the Sasaki metric on $T^1S$, then one can show that the critical exponent of elements representing periodic orbits (counted with respect to the geodesic length for the Sasaki metric) is \emph{equal} to the critical exponent of the set of elements acting freely. We did not however investigate whether this also holds for some or all word metrics.
\end{rmk}

\bibliographystyle{alpha}
\bibliography{biblio}

\end{document}